\newcommand{\f}{\ensuremath{\varphi}}
\newcommand{\p}{\ensuremath{\psi}}
\newcommand{\1}{\ensuremath{\overline{1}}}
\newcommand{\0}{\ensuremath{\overline{0}}}
\DeclareDocumentCommand\addreference{g}{\todo[color = blue!30]{Add reference\IfNoValueF{#1}{: #1}}}
\DeclareDocumentCommand\checkthis{g}{\todo[color = red!50]{Check this\IfNoValueF{#1}{: #1}}}
\DeclareDocumentCommand\fixthis{g}{\todo[color = orange!50]{Fix this\IfNoValueF{#1}{: #1}}}
\DeclareDocumentCommand\expand{g}{\todo[color = green!50]{Expand\IfNoValueF{#1}{: #1}}}
\newtheorem{definition}{Definition}[section]
\newtheorem{theorem}[definition]{Theorem}
\newtheorem{lemma}[definition]{Lemma}
\newtheorem{proposition}[definition]{Proposition}
\newtheorem{remark}[definition]{Remark}
\newtheorem{example}[definition]{Example}
\newtheorem{corollary}[definition]{Corollary}
\newcommand{\bo}{{\circ}}
\newcommand{\bon}{{\bullet}}
\newcommand{\conj}{\ensuremath{\mathbin{\&}}}
\newcommand{\toto}{\leftrightarrow}
\newcommand{\tri}{\triangle}
\newcommand{\alg}[1]{{\ensuremath{\boldsymbol{ #1}}}}
\newcommand{\logic}[1]{\mathrm{ #1}}
\newcommand{\MTL}{{\ensuremath{\mathrm{MTL}}}}
\newcommand{\SMTL}{{\ensuremath{\mathrm{SMTL}}}}
\newcommand{\IMTL}{{\ensuremath{\mathrm{IMTL}}}}
\newcommand{\MTLD}{{\ensuremath{\mathrm{MTL}_\triangle}}}
\newcommand{\BL}{{\ensuremath{\mathrm{BL}}}}
\newcommand{\G}{{\ensuremath{\mathrm{G}}}}
\newcommand{\sCNG}{{\ensuremath{\mathrm{(Cong)}}}}
\newcommand{\Real}{\mathcal{R}}
\newcommand{\RC}{{\ensuremath{\Real\mathrm{C}}}}
\newcommand{\SRC}{{\ensuremath{\mathrm{S}\Real\mathrm{C}}}}
\newcommand{\FSRC}{{\ensuremath{\mathrm{FS}\Real\mathrm{C}}}}
\newcommand{\comentari}[1]{}
\newenvironment{proof}{\trivlist\item[\hskip
\labelsep{\it Proof:\/}]\ignorespaces}{\hfill$\Box$\endtrivlist}
\begin{document}

\title{Logics of formal inconsistency arising from systems of fuzzy logic} 

\author {Marcelo Coniglio$^1$ \and Francesc Esteva$^2$ \and Llu\'is Godo$^2$\\ \\
{\small $^1$ CLE and Department of Philosophy }\\
{\small State University of Campinas} \\
{\small 13083-859 Campinas, Brazil}\\
{\small \texttt{email: coniglio@cle.unicamp.br}}
\\
\\
{\small $^2$ Artificial Intelligence Research Institute (IIIA),  CSIC}\\
{\small Campus UAB, 08193 Bellaterra, Spain }\\
{\small \texttt{email: \{esteva,godo\}@iiia.csic.es}}
}

\date{}
\maketitle

\begin{abstract}
This paper proposes the meeting of fuzzy logic with paraconsistency in a very precise and foundational way. Specifically, in this paper we introduce expansions of the fuzzy logic MTL by means of primitive operators for consistency and inconsistency in the style of the so-called Logics of Formal Inconsistency (LFIs). The main novelty of the present approach is the definition of postulates for this type of operators over MTL-algebras, leading to the definition and axiomatization of a family of logics, expansions of MTL, whose degree-preserving counterpart are paraconsistent and moreover LFIs. 
\end{abstract}

\sloppy

\section{Introduction}

The well-known {\em Sorites paradox} is representative of the
problems  arising from the use of vague predicates, that is,
predicates whose extension is unclear such as `tall' and `bald'.
According to Charles S. Peirce,

\begin{quotation}
{\em A proposition is vague when there are possible states of things concerning which it is intrinsically uncertain whether, had they been contemplated by the speaker, he would have regarded them as excluded or allowed by the proposition. By intrinsically uncertain we mean not uncertain in consequence of any ignorance of the interpreter, but because the speaker's habits of language were indeterminate; so that one day he would regard the proposition as excluding, another as admitting, those states of things. } (\cite{Peirce1902})
\end{quotation}

Besides being an instigating topic for Philosophy, vagueness is also
studied from the mathematical and logical point of view. For instance, the
so-called {\em Mathematical Fuzzy Logic} (MFL), inspired by the
paradigm of {\em Fuzzy Set Theory} introduced in 1965 by L. Zadeh
(cf.~\cite{Zadeh:1965}), studies the question of vagueness from a
foundational point of view based on many-valued logics. In this sense, MFL can be considered as a degree-based approach to vagueness.\footnote{See e.g. \cite{Cintula-Fermueller-Godo-Hajek-Vagueness} for several discussions on degree-based approaches (and in particular fuzzy logic approaches) to vagueness.} Some
systems like \L ukasiewicz and G\"odel-Dummett infinitely valued
logics are, just like fuzzy sets, valued over the real interval $[0,1]$. This
supports  the idea of  MFL being as a kind of foundational
counterpart of fuzzy set theory (which is a discipline mainly devoted to
engineering applications).
The book~\cite{Hajek:1998} by P.
H\'ajek is the first monograph dedicated to a broad study of the
new subject of MFL. In that book the so-called Basic fuzzy logic \BL\ is introduced as the residuated many-valued logic with the semantics on the real unit interval  induced by all {\em continuous} t-norms and their residua. \BL\ generalizes three prominent fuzzy logics, {\L}ukasiewicz, G\"odel-Dummet and Product logics, each one capturing the semantics determined by three particular continuous t-norms, namely {\L}ukasiewicz, minimum and product t-norms respectively. 
The so-called Monoidal t-norm based logic) \MTL\  was introduced in~\cite{Esteva-Godo:Monoidal} as a generalization of \BL\ to capture the semantics induced by {\em left continuous} t-norms and their residua, in fact, as it was proved in \cite{Jenei-Montagna:StandardCompletenessMTL} the theorems of \MTL\ correspond to the common tautologies of all many-valued calculi defined by a left-continuous t-norm and its residuum. This logic, the most general residuated fuzzy logic whose semantics is based on t-norms, will be the starting point of our investigations in the present paper.

Frequently, vagueness is associated to a phenomenon of  `underdetermination of truth'. However, vagueness could be seen from an opposite perspective: if $a$ is a borderline case of a vague predicate $P$, the sentences `$a$ is $P$' and  `$a$ is not $P$'  can be both true (at least to some extent).
This leads to an interpretation of vagueness as  `overdetermination of truth', instead of underdetermination. Being so, a sentence $A$ and its negation can simultaneously be both true, without trivializing (as much we assume that not every sentence is true). This perspective, known as {\em Paraconsistent Vagueness}, connects vagueness to the subject of {\em Paraconsistent Logic} (see, for instance, \cite{HyCol2008} and~\cite{Cob2011}).

 Paraconsistency is devoted to the study  of logic systems with a negation operator, say $\neg$, such that not every contradictory set of premises $\{\varphi,\neg \varphi\}$ trivializes the system. Thus, any paraconsistent logic contains at least a contradictory but non-trivial theory. There exist several systematic approaches to paraconsistency, for instance: N. da Costa's hierarchy of C-sytems $C_n$, for $n > 0$, introduced in 1963 (see~\cite{dac:63});  Relevance  (or Relevant) logics, introduced by A. Anderson and N. Belnap in 1975 (see~\cite{Anderson-Belnap:Entailment});  the Adaptive Logics programme, developed by D. Batens and his group;  R. Routley and G. Priest's philosophical school of Dialetheism, with Priest's logic LP as its formalized counterpart (see, for instance, \cite{Priest-Routley-Norman:Paraconsistent});  and the Logics of Formal Inconsistency (LFIs), introduced by W. Carnielli and J. Marcos in  2000 (see~\cite{car:mar:02} and~\cite{car:con:mar:07}), and also studied e.g. by Avron et al. \cite{avr:zam:07,Arielietal}.  
 The main characteristic of the latter logics is that they  internalize in the object language the notions of consistency and  inconsistency by means of specific connectives (primitive or not). This constitutes a generalization of  da Costa's C-systems.

The present  paper proposes the meeting of fuzzy logic with paraconsistency in a very precise and foundational way. Specifically, we introduce extensions of the fuzzy logic \MTL\ by means of primitive operators for consistency and inconsistency, defining so LFIs based on (extensions of) \MTL. An important feature of this approach is that the LFIs defined in this manner are not based on (positive) classical logic, as in the case of most LFIs studied in the literature, including da Costa's C-systems. In particular, the LFIs proposed here do not satisfy the law of excluded middle: $\varphi \vee \neg\varphi$ is not a valid schema, in general. 

The main novelty of the present approach is the definition of postulates for primitive consistency and inconsistency fuzzy operators over the algebras associated to (extensions of) \MTL; in particular, we show how to define consistency and inconsistency operators over \MTL-algebras. This generalizes the previous approach to fuzzy LFIs introduced in~\cite{er-es-fla-go-no:2013}, where it was shown that a consistency operator can be defined in \MTLD, the expansion of \MTL\ with the Monteiro-Baaz projection connective $\triangle$. However, this consistency operator is not primitive, but it is defined in terms of the operator $\triangle$ together with other operators of \MTL. At this point, it is important to observe that \MTL,  as well as its extensions, are {\em not} paraconsistent logics, provided that the usual truth-preserving consequence relation is considered: from  $\{\varphi, \neg \varphi\}$ every other formula can be derived.
On the other hand, if a degree-preserving consequence relation is adopted, \MTL\ as well as some of its extensions become paraconsistent (see Section~\ref{prelim-1}).

The organization of this paper is as follows. In Sections~\ref{prelim-1}  and~\ref{seclfi}, the basic notions about fuzzy logics and LFIs are introduced. Then Section~\ref{secbola01} contains the main definitions and technical results. In particular, we introduce the notion of  consistency operators on \MTL-algebras and axiomatize several classes of them as expansions of \MTL.  In this framework, the question about how the consistency operator propagates with respect to the \MTL\ connectives is studied in Section~\ref{propag}. In its turn, in Section~\ref{recov} we propose a fuzzy LFI able to recover classical logic by considering additional hypothesis on the consistency operator. The dual case of inconsistency operators is briefly analyzed in Section~\ref{inconsist}. We end up with some concluding remarks  in Section~\ref{conclu}.

\section{Preliminaries I: truth-preserving and degree-preserving fuzzy logics} \label{prelim-1}

In the framework of Mathematical Fuzzy Logic there are two different families of  fuzzy logics according to how the logical consequence is defined, namely truth-preserving and degree-preserving logics. In this section we review the main definitions and properties of these two families of logics.

\paragraph{\bf Truth-preserving fuzzy logics.}

Most well known and studied  systems of mathematical fuzzy logic are the so-called {\em t-norm based fuzzy logics}, corresponding to formal many-valued calculi with truth-values in the real unit interval $[0, 1]$ and with a conjunction and an implication interpreted respectively by a (left-) continuous t-norm and its residuum respectively, and thus, including e.g. the well-known {\L}ukasiewicz  and G\"{o}del infinitely-valued logics,
corresponding to the calculi defined by {\L}ukasiewicz and $\min$ t-norms respectively. The weakest t-norm based fuzzy logic is the logic \MTL\ (monoidal t-norm based logic) introduced in \cite{Esteva-Godo:Monoidal}, whose theorems correspond to the common tautologies of all many-valued calculi defined by a left-continuous t-norm and its residuum \cite{Jenei-Montagna:StandardCompletenessMTL}.

The  language of \MTL\ consists of denumerably many propositional variables $p_1,p_2,\ldots$, binary connectives $\wedge,\conj,\to$, and the truth constant $\0$. Formulas, which will be denoted by lower case greek letters $\f,\p,\chi,\ldots$, are defined by induction as usual. Further connectives and constants are definable; in particular, $\neg\f$ stands for $\f\to\0$, $\1$ stands for $\neg\0$, $\varphi \lor \psi$ stands for $((\varphi \to \psi) \to \psi) \land ((\psi \to \varphi) \to \varphi)$, and $\varphi \leftrightarrow \psi$ stands for $(\varphi \to \psi) \land (\psi \to \varphi)$. A Hilbert-style calculus for  \MTL\  was introduced in \cite{Esteva-Godo:Monoidal} with the following set of axioms:

\begin{itemize}
\item[(A1)] $(\f \to \p) \to ((\p \to \chi) \to (\f \to \chi))$ \vspace{-0.2cm}
\item[(A2)] $\f \conj \p \to \f$  \vspace{-0.2cm}
\item[(A3)] $\f \conj \p \to \p \conj \f$  \vspace{-0.2cm}
\item[(A4)] $\f \wedge \p \to \f$  \vspace{-0.2cm}
\item[(A5)] $\f \wedge \p \to \p \wedge \f$  \vspace{-0.2cm}
\item[(A6)] $\f \conj(\f \to \p) \to \f \wedge \p$  \vspace{-0.2cm}
\item[(A7a)] $(\f \to (\p \to \chi)) \to (\f \conj \p \to \chi)$  \vspace{-0.2cm}
\item[(A7b)] $(\f \conj \p \to \chi) \to (\f \to (\p \to \chi))$  \vspace{-0.2cm}
\item[(A8)] $((\f \to \p) \to \chi) \to (((\p \to \f)\to \chi)\to \chi)$  \vspace{-0.2cm}
\item[(A9)] $\0 \to \f$
\end{itemize}
and whose unique rule of inference is {\em modus ponens}: from $\f$ and $\f \to\p$ derive $\p$.

\MTL\ is an algebraizable logic in the sense of  Blok and Pigozzi \cite{Blok-Pigozzi:AlgebraizableLogics} and its equivalent algebraic semantics is given by the class of $\logic{MTL}$-algebras, that is indeed a variety; call it $\mathbb{MTL}$. \MTL-algebras can be equivalently introduced as commutative, bounded, integral residuated lattices $\langle A, \wedge,\vee,\conj,\to,0,1\rangle$ further satisfying the following {\em prelinearity} equation: $$(x\to y)\vee(y\to x)=\1.$$

Given an \MTL-algebra  $\alg{A}=\langle A,\wedge^\alg{A},\vee^\alg{A},\conj^\alg{A},\to^\alg{A}, 0^\alg{A},1^\alg{A}\rangle$, an $\alg{A}$-evaluation is any function mapping each propositional variable into $A$, $e(\0)=0^\alg{A}$ and such that, for  formulas $\f$ and $\p$, $e(\f\wedge\p)=e(\f)\wedge^\alg{A}e(\p)$; $e(\f\vee\p)=e(\f)\vee^\alg{A}e(\p)$; $e(\f\conj\p)=e(\f)\conj^\alg{A}e(\p)$;  $e(\f\to\p)=e(\f)\to^\alg{A} e(\p)$.  An evaluation $e$ is said to be a {\em model} for a set of formulas $\Gamma$, if $e(\gamma)=1^\alg{A}$ for each $\gamma\in \Gamma$.

We shall henceforth adopt a lighter notation dropping the superscript $^\alg{A}$. The distinction between  a syntactic object and its interpretation in an algebraic structure will always be clear by the context.

The algebraizability gives the following strong completeness theorem:
\begin{itemize}
\item[] \em For every set $\Gamma\cup \{\f\}$ of formulas, $\Gamma \vdash_\MTL \f$ iff for every $\alg{A} \in \mathbb{MTL}$ and every $\alg{A}$-evaluation $e$, if $e$ is a model of $\Gamma$ then $e$ is a model of $\varphi$ as well.
\end{itemize}
For this reason, since the consequence relation amounts to preservation of the truth-constant $\1$, \MTL\ can be called a {\em (full) truth-preserving} logic.

Actually, the algebraizability is preserved for any logic L that is a (finitary)  expansion of \MTL\ satisfying the following congruence property\\

\noindent 
\begin{tabular}{lll}
\sCNG & $\f \to \p, \p\to \f \ \vdash_{\logic{L}}$ & $c(\chi_1,\dots,\chi_{i}, \f,\chi_{i+2},\dots,\chi_n)$\\
  & & $\to c(\chi_1,\dots,\chi_{i}, \p,\chi_{i+2},\dots,\chi_n)$
\end{tabular}

\

\noindent for any possible new $n$-ary connective $c$ and each $i < n$.  These expansions, that we will call {\em core expansions} of \MTL\ (in accordance with \cite{CinHorNog2014}),   are in fact {\em Rasiowa-implicative logics}  (cf.~\cite{Rasiowa:AlgebraicApproach}). As proved in \cite{Cintula-Noguera:Handbook}, every Rasiowa-implicative logic $\logic{L}$ is algebraizable and, if it is finitary, its equivalent algebraic semantics, the class $\mathbb{L}$ of $\logic{L}$-algebras, is a quasivariety. {\em Axiomatic} expansions of MTL, i.e.\ without any further inference rule, satisfying (Cong) are  called {\em core fuzzy logics} in the literature (see e.g.\ \cite{Cintula-Noguera:Handbook}), and their associated quasi-varieties of algebras are in fact varieties. 

As a consequence, any logic $\logic{L}$ which is a core expansion of MTL, in particular any core fuzzy logic, enjoys the same kind of the above  strong completeness theorem with respect to the whole class of corresponding $\logic{L}$-algebras. But for  core fuzzy logics we can say more than that. Indeed, for any core fuzzy logic $\logic{L}$,  
 the variety of $\logic{L}$-algebras can also be shown to be generated by the subclass of all its linearly ordered members \cite{Cintula-Noguera:Handbook}.\footnote{ Moreover, for a number of core fuzzy logics, including \MTL, it has been shown that their corresponding varieties are also generated by the subclass of \MTL-chains  defined on the real unit interval, indistinctively called in the literature  as {\em standard} or {\em real} chains. For instance,  \MTL\  is also complete wrt real \MTL-chains, that are of the form  $[0,1]_\ast=\langle [0,1], \min, \max, \ast,\to_\ast,0,1\rangle$ of type $\langle 2,2,2,2,0,0\rangle$, where $\ast$ denotes a left-continuous t-norm and $\to_\ast$ is its residuum \cite{Jenei-Montagna:StandardCompletenessMTL}. } This means that any core fuzzy logic $\logic{L}$ is strongly complete with respect to the class of $\logic{L}$-chains, that is, core fuzzy logics are  {\em semilinear}.

All core fuzzy logics enjoy a form of local deduction theorem. As usual, $\varphi^n$ will be used as a shorthand for $\varphi \conj \stackrel{n}{\ldots} \conj \varphi$, where $\varphi^0 = \overline{1}$.  Using this notation one can write the following {\em local deduction theorem} for any core fuzzy logics  $\logic{L}$: 
for each set of formulas $\Sigma\cup\{\varphi,\psi\}$ the following holds:
$$\Sigma,\varphi\vdash_\logic{L} \psi \mbox{ iff there is}\,\, n \geq 0 \mbox{
such that }\Sigma\vdash_\logic{L}  \varphi^n\to\psi.$$

Interesting axiomatic extensions of \MTL\ used in the paper are the ones given in Table \ref{table-logics}, but first  we list in Table \ref{axioms}  the axioms needed to define these extensions of \MTL.

\begin{center}
\begin{table}[h]

\begin{tabular}{|c|c|}
\hline Axiom schema & Name\\
\hline
\hline $\neg \neg \varphi \to \varphi$ & Involution (Inv)\\
\hline $\neg \varphi \lor ((\varphi \to \varphi \& \psi) \to \psi)$ &
Cancellation (C)\\
\hline $\varphi \to \varphi \& \varphi$ & Contraction (Con)\\
\hline $\varphi \wedge \psi \to \varphi \& (\varphi \to \psi)$ &
Divisibility (Div)\\
\hline $\varphi \land \neg \varphi \to \overline{0}$ &
Pseudo-complementation (PC)\\
\hline $(\varphi \& \psi \to \overline{0}) \lor (\varphi \land \psi
\to \varphi \& \psi)$ & Weak Nilpotent Minimum (WNM)\\
\hline
\end{tabular}
\caption{Some usual axiom schemata in fuzzy logics.}
\label{axioms}
\end{table}
\end{center}

\begin{table}[h]
\begin{center}
\begin{tabular}{|c|c|c|}
\hline Logic & Additional axiom schemata & References\\
\hline
\hline Strict \MTL\ (SMTL) & (PC) & \cite{Hajek:ObservationsMonoidal} \\
\hline Involutive \MTL\ (IMTL) & (Inv) & \cite{Esteva-Godo:Monoidal}\\
\hline Weak Nilpotent Minimum (WNM) & (WNM) & \cite{Esteva-Godo:Monoidal}\\
\hline Nilpotent Minimum (NM) & (Inv) and (WNM) &  \cite{Esteva-Godo:Monoidal}\\
\hline Basic Logic (BL) & (Div) & \cite{Hajek:1998} \\
\hline Strict Basic Logic (SBL) & (Div) and (PC) & \cite{Esteva-GHN:Involution} \\
\hline \L ukasiewicz Logic ({\L}) & (Div) and (Inv) & \cite{Hajek:1998}\\
\hline Product Logic ($\Pi$) & (Div) and (C) & \cite{Hajek-Godo-Esteva:ProductLogic} \\
\hline G\'odel Logic (\G )& (Con) & \cite{Hajek:1998}  \\
\hline
\end{tabular}
\end{center}
\caption{Some \index{axiomatic extensions of MTL}axiomatic extensions
of MTL obtained by adding the
corresponing additional axiom schemata, and the references where they
have been introduced (in the context of fuzzy
logics).}\label{table-logics}
\end{table}

\MTL\ can be considered in fact  as the  logic of left-continuous t-norms \cite{Jenei-Montagna:StandardCompletenessMTL} and \BL\ as the logic of continuous t-norms \cite{Cignoli-EGT:StandardBL},  in the sense that theorems of these logics coincide with common tautologies of interpretations on the MTL (respectively BL) chains defined on the real unit interval $[0,1]$ by left-continuous (respectively continuous) t-norms and their residua. 

Another interesting family of fuzzy logics are the so-called {\em logics of a (left-continuous) t-norm}. Given a left-continuous t-norm $\ast$, define the real (or standard) algebra $[0,1]_\ast = ([0,1], \min \max, \ast, \to_\ast, 0,1)$ where $\to_\ast$ is the residuum of $*$. Then define the logic of the t-norm $\ast$ as the logic $\logic{L}_\ast$ whose (semantical) notion of consequence relation is as follows:   $\varphi$ is a consequence of a set of formulas $\Gamma$ iff for every evaluation $v$ over $[0,1]_\ast$ such that $v(\gamma) = 1$ for each $\gamma \in \Gamma$, then $v(\varphi) = 1$. When $*$ is a continuous t-norm,  $\logic{L}_\ast$ has been proved finitely axiomatizable as extension of \BL\ (see \cite{Esteva-Godo-Montagna:EquationalCharacterizationSubvarietiesBL}).

As we have mentioned, all the axiomatic expansions of \MTL\ (i.e.\ all core fuzzy logics) are semilinear and enjoy the local deduction detachment theorem. 
Another very interesting class of fuzzy logics arise from the (non-axiomatic) expansion of  \MTL\ with the Monteiro-Baaz projection connective $\triangle$, obtaining again a finitary Rasiowa-implicative semilinear logic \MTLD.
Indeed,  \MTLD\ is axiomatized by adding to the Hilbert-style system of \MTL\ the deduction rule of necessitation (from $\varphi$ infer $\tri\varphi$) and the following axiom schemata: \\

\begin{tabular}{ll}
($\tri1$) &$\tri \varphi \lor \neg \tri \varphi$ \\
($\tri2$) & $\tri (\varphi \lor \psi) \to (\tri \varphi\lor \tri \psi)$ \\
($\tri3$) & $\tri \varphi \to \varphi$ \\
($\tri4$) & $\tri \varphi \to \tri \tri \varphi$ \\
($\tri5$) & $\tri (\varphi \to \psi) \to (\tri \varphi\to \tri \psi)$\\
\end{tabular}
\;\\

Then, one analogously defines the class of {\em $\triangle$-core fuzzy logics} as the axiomatic expansions of \MTLD\ satisfying \sCNG\ for any possible new connective. They satisfy the {\em global deduction theorem} in the following way: for any $\triangle$-core fuzzy logic $\logic{L}$, and  each set of formulas $\Sigma\cup\{\varphi,\psi\}$, the following holds:
$$\Sigma,\varphi\vdash_\logic{L} \psi \mbox{ iff }\/\Sigma\vdash_\logic{L} \tri\varphi\to \psi.$$

Semilinearity can also be inherited by many expansions of ($\triangle$-)core fuzzy logics with new (finitary) inference rules. Indeed, in \cite{Cintula-Noguera:Handbook} it is shown that an expansion $\logic{L}$ of a ($\triangle$-)core fuzzy logic is semilinear iff for each newly added finitary inference rule
\begin{description}
\item[(R)] from $\Gamma$ derive $\varphi$,
\end{description}
its corresponding $\lor$-form
\begin{description}
\item[(R$^\lor$)] from $\Gamma \lor p$ derive $\varphi \lor p$
\end{description}
is derivable  in L as well, where $p$ is an arbitrary propositional variable not appearing in $\Gamma \cup \{\f\}$.


In this paper we will use the following notions of completeness of a logic $\logic{L}$ with respect to the class {\em real} $\logic{L}$-chains. Although we will mainly focus on core fuzzy logics, we formulate them for the more general case of logics that are semilinear expansions of \MTL\  whose class of {\em real} $\logic{L}$-chains is non-empty.

\begin{definition}[\RC, \FSRC, \SRC] 
Let  {\rm L} be a semilinear core expansion of \MTL\  and let $\mathcal R$ be the class of real {\rm L}-chains, i.e. {\rm L}-chains whose support is the real unit interval $[0, 1]$. We say that {\rm L} has the \emph{(finitely) strong  $\mathcal{R}$-completeness property}, {\rm (F)}\SRC\ for short, when for every (finite) set of formulas $T$ and every formula $\varphi$ it holds that $T \vdash_{\rm L} \varphi$ iff $e(\varphi) = \overline{1}^\alg{A}$ for each $\alg{A}$-evaluation such that $e[T] \subseteq \{\overline{1}^\alg{A}\}$ for every {\rm L}-algebra $\alg{A} \in \mathcal{R}$.  We say that {\rm L} has the  $\mathcal{R}$-\emph{completeness property}, \RC\ for short, when the equivalence is true for $T = \emptyset$.
\end{definition}

Of course, the \SRC\ implies the \FSRC, and the \FSRC\ implies the \RC. The \SRC\ and \FSRC\ have traditionally been proved for many fuzzy logics by showing an embeddability property, namely by showing in the first case that every countable $\logic{L}$-chain is embeddable into a chain of $\mathcal{R}$, and in the second case by showing that every countable $\logic{L}$-chain is {\em partially} embeddable into a chain of $\mathcal{R}$ (i.e.\ for every finite partial algebra of a countable $\logic{L}$-chain there is a one-to-one mapping into some $\logic{L}$-chain over $[0, 1]$ preserving the defined operations). In~\cite{Cintula-EGGMN:DistinguishedSemantics} it was shown that, for ($\Delta$-)core fuzzy logics these sufficient conditions are also necessary (under a weak condition). This was further generalized in \cite{Cintula-Noguera:Handbook}, where Cintula and Noguera show that these conditions are also necessary for a more general class of logics, including semilinear core expansions of \MTL. 

\begin{theorem}[\cite{Cintula-EGGMN:DistinguishedSemantics,Cintula-Noguera:Handbook} Characterization of completeness properties]\label{t:Char-SKC-FSKC} \label{SKC}
Let $\logic{L}$ be a semilinear core expansion of \MTL. Then:
\begin{itemize}
\item $\logic{L}$ has the \SRC\ iff every countable $\logic{L}$-chain is embeddable into some chain of $\mathcal{R}$.
\item If the language of\/ $\logic{L}$ is finite, then $\logic{L}$ has the \FSRC\ iff every countable $\logic{L}$-chain is partially embeddable into some chain of $\mathcal{R}$.
\end{itemize}
\end{theorem}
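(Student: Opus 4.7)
The plan is to prove each biconditional by separating the sufficiency and necessity directions, translating throughout between the syntactic completeness statements and the algebraic statement about (partial) embeddings. For \emph{sufficiency}, the key fact is that every semilinear core expansion of \MTL\ is, by definition, strongly complete with respect to the class of all $\logic{L}$-chains. So if $T \not\vdash_\logic{L} \varphi$, there is an $\logic{L}$-chain $\alg{A}$ and an evaluation $e$ witnessing non-derivability; restricting to the subalgebra generated by the image of $e$ yields a countable $\logic{L}$-chain countermodel (countable because the propositional language is). Embedding this countable chain into a real $\logic{L}$-chain produces the real countermodel needed for \SRC. For \FSRC\ with $T$ finite, the evaluations of all subformulas of $T \cup \{\varphi\}$ involve only finitely many elements and finitely many operation computations, so a \emph{partial} embedding preserving those finitely many operation values already transports the countermodel; this is precisely where the finite-language hypothesis enters, as it ensures the relevant partial record is genuinely finite.

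For \emph{necessity}, the idea is a diagram-language argument. Given a countable $\logic{L}$-chain $\alg{A}$, introduce a fresh propositional variable $p_a$ for each $a \in A$ and form the diagram theory
\[
T_{\alg{A}} \; = \; \{\, p_{a \star b} \leftrightarrow (p_a \star p_b) : a,b \in A,\ \star \text{ a binary connective}\,\} \; \cup \; \{p_0 \leftrightarrow \0,\ p_1 \leftrightarrow \1\}.
\]
The canonical evaluation $p_a \mapsto a$ is a model of $T_{\alg{A}}$ in $\alg{A}$, and since $a \neq b$ in the chain $\alg{A}$ forces $a \leftrightarrow b < 1$, soundness (over $\logic{L}$-chains) gives $T_{\alg{A}} \not\vdash_\logic{L} p_a \leftrightarrow p_b$ for all $a \neq b$. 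Invoking \SRC\ one now wants to produce a real $\logic{L}$-chain $\alg{B}$ with a \emph{single} evaluation $e^{\ast}$ that models $T_{\alg{A}}$ while simultaneously refuting $p_a \leftrightarrow p_b$ for every $a \neq b$; the embedding is then $a \mapsto e^{\ast}(p_a)$, a homomorphism by the diagram equations and injective by the refuted identities. For \FSRC\ one runs the same construction on the diagram of an arbitrary finite subset $F \subseteq A$, obtaining a partial embedding of $F$ into a real chain.

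The main obstacle lies in the last extraction step for the \SRC\ case. The defining property of \SRC\ supplies real-chain countermodels one formula at a time, while what is needed is a single real-chain model witnessing all countably many non-identities $p_a \leftrightarrow p_b \neq \1$ at once, and moreover doing so without a $\tri$ connective available to express such strictness internally. The classical route, carried out in full in \cite{Cintula-EGGMN:DistinguishedSemantics, Cintula-Noguera:Handbook}, reformulates \SRC\ as a preservation-of-consistency property of theories over real chains and then passes through a suitably saturated extension of $T_{\alg{A}}$ whose Lindenbaum-Tarski algebra is itself an $\logic{L}$-chain embeddable into a real one; the countability of $\alg{A}$---and hence of the enlarged propositional language---is crucial for this saturation argument to terminate. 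By contrast, the \FSRC\ direction is technically easier, since only finitely many non-identities need to be joined at a time and the pointwise statement of \FSRC\ combined with the finiteness of the language already suffices.
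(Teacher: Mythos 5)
The paper does not prove this theorem at all: it is imported verbatim from \cite{Cintula-EGGMN:DistinguishedSemantics,Cintula-Noguera:Handbook}, so there is no in-paper argument to compare yours against. Judged on its own terms, your two sufficiency directions (embeddability $\Rightarrow$ \SRC, partial embeddability $\Rightarrow$ \FSRC) are correct and standard: strong chain-completeness yields a countable chain countermodel (the subalgebra generated by the relevant values), and the (partial) embedding transports it to a real chain. One small correction there: the finite-language hypothesis is not what makes the \FSRC\ sufficiency work, since only the finitely many operation instances occurring in the subformulas of $T\cup\{\varphi\}$ need to be preserved in any case; finiteness of the language is really needed in the \emph{necessity} direction, so that a finite partial subalgebra is a genuinely finite structure whose diagram is a finite theory.

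The genuine gap is where you locate it, in the necessity direction for \SRC, and it is not closed by your gesture at the literature. The diagram theory $T_{\alg{A}}$ is the right object, and a real-chain model of it does induce a homomorphism $a\mapsto e^{*}(p_a)$; the problem is injectivity, i.e.\ refuting all countably many formulas $p_a\leftrightarrow p_b$ ($a\neq b$) with a \emph{single} model, whereas \SRC\ hands you a separate countermodel for each one. Your proposed repair --- ``a suitably saturated extension of $T_{\alg{A}}$ whose Lindenbaum--Tarski algebra is itself an $\logic{L}$-chain embeddable into a real one'' --- cannot work as stated: over the variables $\{p_a : a\in A\}$ the Lindenbaum--Tarski algebra of (the deductive closure of) $T_{\alg{A}}$ is already isomorphic to $\alg{A}$, and any larger theory $T'\supseteq T_{\alg{A}}$ has as its Lindenbaum--Tarski algebra a \emph{quotient} of $\alg{A}$; saturating the theory can only collapse elements, never produce an algebra into which $\alg{A}$ embeds. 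Likewise, combining the per-pair countermodels only embeds $\alg{A}$ into a \emph{product} of real chains, which is not a chain; bridging that gap is precisely the nontrivial content of the cited results and is missing here. By contrast, the \FSRC\ necessity direction genuinely is closable with your setup, but you should say how: for a finite $F\subseteq A$ one refutes the single finite disjunction $\bigvee_{a\neq b,\ a,b\in F}(p_a\leftrightarrow p_b)$, which no chain evaluation modelling the finite diagram can send to $1$ because a finite disjunction evaluates to the maximum in a chain; one application of \FSRC\ then separates all pairs simultaneously. Without that disjunction trick even the finite case suffers from the same one-formula-at-a-time problem as the countable one.
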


\paragraph{\bf Degree-preserving fuzzy logics.}

It is clear that ($\triangle$-)core fuzzy logics, like \MTL, are (full) truth-preserving fuzzy logics. But besides the truth-preserving paradigm that we have so far considered, one can find an alternative approach in the literature. Given a ($\triangle$-)core fuzzy logic $\logic{L}$, and based on the definitions in~\cite{Bou-EFGGTV:PreservingDegreesResiduated}, we can introduce a variant of $\logic{L}$ that we shall denote by $\logic{L}^{\mbox{\tiny $\leq$ }}$,  whose associated  deducibility relation has the following semantics: for every set of formulas $\Gamma\cup\{\f\}$, $\Gamma\vdash_{\logic{L}^{\mbox{\tiny $\leq$ }}}\f$ iff for every $\logic{L}$-chain $\alg{A}$, every $a \in A$, and every $\alg{A}$-evaluation $v$, if $a \leq v(\p)$ for every $\p \in \Gamma$, then $a \leq v(\f)$. For this reason $\logic{L}^{\mbox{\tiny $\leq$ }}$ is known as a fuzzy logic {\em preserving degrees of truth}, or the {\em degree-preserving companion} of $\logic{L}$. In this paper, we often use generic statements about ``every logic $\logic{L}^{\mbox{\tiny $\leq$ }}$'' referring to ``the degree-preserving companion of any ($\triangle$-)core fuzzy logic (or even of any semilinear core expansion of \MTL) $\logic{L}$''.

As regards to axiomatization, if $\logic{L}$ is a core fuzzy logic, i.e.\ with Modus Ponens as the unique inference rule, then the logic $\logic{L}^{\mbox{\tiny $\leq$ }}$ admits a Hilbert-style axiomatization having  the same axioms as $\logic{L}$ and the following deduction rules \cite{Bou-EFGGTV:PreservingDegreesResiduated}:
\begin{description}
\item[(Adj-$\wedge$)] from $\f$ and $\p$ derive $\f\wedge\p$
\item[(MP-$r$)] if $\vdash_{\logic{L}}\f\to\p$ (i.e. if  $\f\to\p$ is a  theorem of $\logic{L}$), then from $\f$ derive $\p$
\end{description}
Note that if the set of theorems of $\logic{L}$ is decidable, then the above is in fact  a recursive Hilbert-style axiomatization of $\logic{L}^{\mbox{\tiny $\leq$ }}$.

In general, let $\logic{L}$ be a semilinear core expansion of \MTL\  with a set of new inference rules,
\begin{description}
\item[(R$_i$)] from $\Gamma_i$ derive $\varphi_i$, for each  $i\in I$.
\end{description}
Then $\logic{L}^{\mbox{\tiny $\leq$ }}$ is axiomatized by adding to the axioms of $\logic{L}$ the above two inference rules plus the following restricted rules
\begin{description}
\item[(R$_i$-$r$)] if  $\vdash_{\logic{L}} \Gamma_i$, then derive $\varphi_i$.
\end{description}

Moreover, if $\logic{L}$ is a $\triangle$-core fuzzy logic, then the only rule one should add to $\logic{L}^{\mbox{\tiny $\leq$ }}$ is the following restricted necessitation rule for $\triangle$:
\begin{description}
\item[($\triangle$-$r$)] if $\vdash_{\logic{L}} \f$, then derive $\triangle \f$.
\end{description}

The  key relationship between $\logic{L}$ and $\logic{L}^{\mbox{\tiny $\leq$ }}$ is given by the following equivalence:
for any formulas $\varphi_1, \ldots, \varphi_n, \psi$, it holds
\begin{center}
$\varphi_1, \ldots, \varphi_n \vdash_{\logic{L}^{\mbox{\tiny $\leq$ }}} \psi$ iff \/ $\vdash_\logic{L} (\varphi_1 \land \ldots \land \varphi_n) \to \psi$.
\end{center}
This relation points out that, indeed, deductions from a finite set of premises in $\logic{L}^{\mbox{\tiny $\leq$ }}$ exactly correspond to  theorems in $\logic{L}$. In particular, both logics share the same theorems: $\vdash_{\logic{L}} \varphi$ \mbox{}  iff \mbox{}  $\vdash_{\logic{L}^{\mbox{\tiny $\leq$ }}} \varphi$. Moreover, this also implies that if $\logic{L}'$ is a conservative expansion of $\logic{L}$, then $\logic{L'}^{\mbox{\tiny $\leq$ }}$ is also a conservative expansion of $\logic{L}^{\mbox{\tiny $\leq$ }}$.

\section{Preliminaries II: logics of formal inconsistency}   \label{seclfi}

Paraconsistency is the study of logics having a negation operator $\neg$ such that it is not explosive with respect to $\neg$, that is, there exists at least a formula $\varphi$ such that from $\{\varphi, \neg\varphi\}$ it does not follow any formula. In other words, a paraconsistent logic is a logic having at least a contradictory, non-trivial theory.

Among the plethora of paraconsistent logics proposed in the literature, the {\em Logics of Formal
Inconsistency} (LFIs), proposed in~\cite{car:mar:02} (see also~\cite{car:con:mar:07}), play an important role, since they internalize in the object language the very notions of consistency and  inconsistency by
means of specific connectives (primitives or not).\footnote{We should warn the reader that in the frame of LFIs, the term {\em consistency} is used to refer to formulas that basically exhibit a classical, explosive behaviour  rather than for referring to formulas being (classically) satisfiable. } This
generalizes the strategy of N. da Costa, which introduced in~\cite{dac:63} the
well-known hierarchy of systems $C_n$, for $n > 0$.
Besides being able to distinguish between
contradiction and inconsistency, on the one hand, and non-contradiction and consistency, on the other, LFIs are non-explosive logics, that is, paraconsistent:  in general, a contradiction does not entail arbitrary
statements, and so the  Principle of Explosion (for all $\varphi, \psi$ it holds $\varphi, \neg\varphi\vdash\psi$)  does not hold. However, LFIs are {\em gently explosive}, in the sense that, adjoining
the additional requirement of consistency, then contradictoriness
does cause explosion: $\bigcirc(\varphi),\varphi, \neg\varphi\vdash\psi$ for every $\varphi$ and $\psi$. Here, $\bigcirc(\varphi)$ denotes that $\varphi$ is consistent. The general definition of LFIs we will adopt here, slightly modified from the original one proposed in~\cite{car:mar:02} and~\cite{car:con:mar:07}, is the following:

\begin{definition} \label{defLFI}
Let  $\logic{L}$ be a logic defined in a language $\mathcal{L}$ containing a negation $\neg$, and let  $\bigcirc(p)$ be a nonempty set of formulas depending exactly on the propositional variable $p$. Then $\logic{L}$ is an LFI (with respect to $\neg$ and  $\bigcirc(p)$) if the following holds (here, $\bigcirc(\varphi)=\{\psi[p/\varphi]  \ : \  \psi(p) \in \bigcirc(p)\}$ and $\psi[p/\varphi]$ denotes the formula obtained from $\psi$ by replacing every occurrence of the variable $p$ by the formula $\varphi$):
\begin{itemize}
       \item [(i)] $\varphi,\neg\varphi\nvdash\psi$ for some $\varphi$ and $\psi$, i.e., $\logic{L}$ is not explosive w.r.t.  $\neg$;
       \item [(ii)] $\bigcirc(\varphi),\varphi \nvdash \psi$ for some $\varphi$ and $\psi$;
       \item [(iii)] $\bigcirc(\varphi), \neg \varphi \nvdash \psi$ for some $\varphi$ and $\psi$; and
       \item [(iv)]  $\bigcirc(\varphi),\varphi,\neg\varphi\vdash\psi$ for every $\varphi$ and $\psi$, i.e., $\logic{L}$ is gently explosive w.r.t.  $\neg$ and $\bigcirc(p)$.
\end{itemize}
\end{definition}

In the case that $\bigcirc(\varphi)$ is a singleton (which will be the usual situation), its element will we denoted by $\bo\varphi$, and $\bo$ will be called a {\em consistency} operator in $\logic{L}$ with respect to $\neg$. A consistency operator can be primitive (as in the case of most of the systems treated in~\cite{car:mar:02} and~\cite{car:con:mar:07}) or, on the contrary, it can be defined in terms of the other connectives of the language. For instance, in the well-known system $C_1$ by da Costa, consistency is defined by the formula $\bo\varphi=\neg(\varphi \wedge \neg\varphi)$ (see~\cite{dac:63}).

Given a consistency operator $\bo$, an {\em inconsistency} operator $\bullet$ is naturally defined as $\bullet\varphi=\neg\bo\varphi$. In the stronger LFIs, the other way round holds, and so $\bo$ can be defined from a given $\bullet$ as $\bo\varphi=\neg\bullet\varphi$.

All the LFIs proposed in~\cite{car:mar:02} and~\cite{car:con:mar:07} are extensions of positive classical logic, therein called $\mathrm{CPL}^+$. The weaker system considered there is called  $\mathrm{mbC}$, defined in a language containing $\wedge$, $\vee$, $\to$, $\neg$ and $\bo$, and it is obtained from  $\mathrm{CPL}^+$ by adding the schema axioms $\varphi \vee \neg\varphi$ and $\bo\varphi \to(\varphi \to(\neg\varphi \to\psi))$.

As we shall see in the next section, the definition of LFIs can be generalized to the algebraic framework of MTLs, constituting an interesting approach to paraconsistency under the perspective of LFIs, but without the requirement of being an extension of  $\mathrm{CPL}^+$.

\section{Axiomatizating expansions of paraconsistent fuzzy logics with consistency operators $\bo$} \label{secbola01}

As observed in \cite{er-es-fla-go-no:2013}, truth preserving fuzzy logics are not paraconsistent since from $\varphi, \neg \varphi$ we obtain $\varphi \& \neg \varphi$, that is equivalent to the truth-constant $\overline{0}$, and thus they are explosive. However in the case of degree-preserving fuzzy logics,   from $\varphi, \neg \varphi$  one cannot   always  derive  the truth-constant $\overline{0}$, and  hence  there are  paraconsistent degree-preserving fuzzy logics. Indeed we have the following scenario.

\begin{proposition} Let $\logic{L}$ be a semilinear core expansion of \MTL.
The following conditions hold:
\begin{enumerate}
\item  $\logic{L}$ is  explosive, and hence it is not paraconsistent

\item  $\logic{L}^{\mbox{\tiny $\leq$}}$ is paraconsistent iff $\logic{L}$ is not pseudo-complemented, i.e. if $\logic{L}$  does not prove the law $\neg(\varphi \land \neg \varphi)$.
\end{enumerate}
\end{proposition}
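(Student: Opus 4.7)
The plan is to handle both parts through the basic MTL machinery and the key relationship between $\logic{L}$ and its degree-preserving companion $\logic{L}^{\mbox{\tiny $\leq$}}$ established in Section~\ref{prelim-1}.

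For part (1), I would argue directly that $\varphi, \neg\varphi \vdash_\logic{L} \psi$ for arbitrary $\varphi$ and $\psi$. Since $\neg\varphi$ abbreviates $\varphi \to \overline{0}$, modus ponens on $\varphi$ and $\neg\varphi$ produces $\overline{0}$. Then axiom (A9), $\overline{0} \to \psi$, together with another application of modus ponens gives $\psi$. This uses nothing beyond the axioms and rule shared by every core expansion of \MTL, so $\logic{L}$ is explosive and in particular not paraconsistent.

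For part (2), the essential tool is the equivalence recalled in the preliminaries: for finitely many premises, $\varphi_1,\ldots,\varphi_n \vdash_{\logic{L}^{\mbox{\tiny $\leq$}}} \psi$ iff $\vdash_\logic{L} (\varphi_1 \land \ldots \land \varphi_n) \to \psi$. Applying it with $n=2$, paraconsistency of $\logic{L}^{\mbox{\tiny $\leq$}}$ amounts to the existence of formulas $\varphi,\psi$ with $\nvdash_\logic{L} (\varphi \land \neg\varphi) \to \psi$. For the right-to-left direction, assume $\logic{L}$ is not pseudo-complemented, so that for some $\varphi$ we have $\nvdash_\logic{L} \neg(\varphi \land \neg\varphi)$, i.e.\ $\nvdash_\logic{L} (\varphi \land \neg\varphi) \to \overline{0}$; taking $\psi=\overline{0}$ in the equivalence shows $\varphi,\neg\varphi \nvdash_{\logic{L}^{\mbox{\tiny $\leq$}}} \overline{0}$, witnessing paraconsistency. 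For the contrapositive of the other direction, assume $\logic{L}$ proves $\neg(\varphi \land \neg\varphi)$, that is, $\vdash_\logic{L} (\varphi \land \neg\varphi) \to \overline{0}$; combining this with axiom (A9) $\overline{0} \to \psi$ via the transitivity of implication derivable from (A1), one gets $\vdash_\logic{L} (\varphi \land \neg\varphi) \to \psi$ for every $\psi$, so by the equivalence $\logic{L}^{\mbox{\tiny $\leq$}}$ is explosive.

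No genuine obstacle is expected: both implications reduce to a single use of the $\vdash_\logic{L}$-$\vdash_{\logic{L}^{\mbox{\tiny $\leq$}}}$ bridge plus routine MTL deductions. The only point deserving a brief remark is that ``not pseudo-complemented'' should be read schematically, i.e.\ as the existence of an instance (indeed, the propositional-variable instance suffices, by substitution) for which $\neg(\varphi \land \neg\varphi)$ fails to be a theorem, which is exactly the existential quantifier appearing in the definition of paraconsistency.
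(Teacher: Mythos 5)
Your proposal is correct and follows essentially the same route as the paper: part (1) is the same one-step derivation of $\overline{0}$ from the contradictory pair in the truth-preserving logic (the paper forms $\varphi \conj \neg\varphi$, you apply modus ponens directly, which amounts to the same thing), and part (2) is exactly the paper's argument via the bridge $\varphi,\neg\varphi \vdash_{\logic{L}^{\mbox{\tiny $\leq$}}} \psi$ iff $\vdash_{\logic{L}} (\varphi\land\neg\varphi)\to\psi$ specialized to $\psi=\overline{0}$. Your write-up merely spells out both directions and the schematic reading of ``pseudo-complemented'', which the paper leaves implicit.
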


The proof of the second item is easy since  $\varphi, \neg \varphi  \vdash_{\logic{L}^{\mbox{\tiny $\leq$}}} \overline{0}$ does not hold only in the case $\logic{L}$ does not prove $(\varphi \land \neg \varphi) \to \overline{0}$, or in other words, only in the case $\logic{L}$ is not an expansion of \SMTL.

As a consequence, from now on $\logic{L}$ will refer to any semilinear core expansion of \MTL\ which is not a \SMTL\ logic (not satisfying axiom (PC)). Indeed we are interested in the expansion with a consistency operator $\bo$ of a logic $\logic{L}^{\mbox{\tiny $\leq$ }}$ (when $\logic{L}$ is not a \SMTL\ logic).  In order to axiomatize these expansions, we need first to axiomatize the expansion of the truth-preserving $\logic{L}$ with such an operator $\bo$ and from them, as explained in Section \ref{prelim-1}, we can then obtain the desired axiomatizations.

\subsection{Expansions of truth-preserving fuzzy logics with consistency operators $\bo$}

Having in mind the properties that a consistency operator has to verify in a paraconsistent logic (recall Definition~\ref{defLFI}), and taking into account that any semilinear core expansion of \MTL\ is complete with respect to the chains of the corresponding varieties, it seems reasonable to define a consistency operator over a non-\SMTL\ chain $\bf A$  as a unary operator $\bo:A \to A$ satisfying the following conditions:

\begin{itemize}
\item[(i)] $x \wedge \bo(x) \neq 0$ for some $x \in A$;
\item[(ii)] $\neg x\wedge \bo(x) \neq 0$ for some $x \in A$;
\item[(iii)] $x \wedge \neg x \wedge \bo(x) = 0$ for every $x \in A$.
\end{itemize}

Such an operator $\bo$ can be indeed considered as the algebraic counterpart of a {\em consistency operator}  in the sense of Definition~\ref{defLFI}. Actually, we can think about the value $\bo(x)$ as denoting the (fuzzy) degree of `classicality' (or `reliability', or `robustness') of $x$ with respect to the satisfaction of the law of explosion, namely $x \wedge \neg x=0$. \\

Let us have a closer look at how operators $\bo$ on a non-\SMTL\ chain satisfying the above conditions (i), (ii) and (iii) may look like.  Let us consider the set $N({\bf A}) = \{x \in A\setminus\{1\} \mid \neg x = 0\}$. Notice that either $N({\bf A}) = \emptyset$ (for example, this is the case of IMTL chains) or $N({\bf A}) \in \{ [a, 1), (a,1)\}$ where $a = \bigwedge N({\bf A})$. If $x \notin N(A) \cup \{0,1\}$ then by (iii) we have $\bo(x) = 0$. Thus since $\neg(x) = 0$ for $x \in N(A)$, (ii) implies that $\bo(0) > 0$. On the other hand by (i), $\bo(x) >0$ for some $x \in N(A) \cup \{�1\}$.
Therefore, any operator $\bo$ verifying (i), (ii) and (iii) must satisfy the following minimal conditions:\\
$$  \left \{ \begin{array}{ll}
\bo(0) > 0, & \\
\bo(x)  = 0, & \mbox{if } x \in (0,1) \setminus N({\bf A})\\
\bo(x) > 0, &  \mbox{for some } x \in N(A) \cup \{�1\} \\
\end{array}
\right .
$$

However,  since in our setting the intended meaning of $\bo(x)$ is the (fuzzy) degree of `classicality' or `reliability', or `robustness' of $x$, we propose the following stronger postulates for such a consistency operator on non-\SMTL\ chains $\bf A$:

\begin{itemize}
\item[(c1)] If $x \wedge \neg x \neq 0$ then $\bo(x)=0$;
\item[(c2)] If $x \in\{0,1\}$ then $\bo(x)=1$;
\item[(c3)] If $\neg x=0$ and $x \leq y$  then ${\circ}(x) \leq {\circ}(y)$.
\end{itemize}

Clause (c1) just guarantees that condition (iii) for consistency operators  is satisfied by $\bo$.
In the classical case, both truth-values $0$ and $1$ satisfy the explosion law $x \wedge \neg x = 0$ and so $\bo(x)=1$ for every truth-value $x$. Since $\bo$ intends to extend the classical case, clause (c2) reflects this situation (another justification for (c2) is that $0$ and $1$ are classical truth-values with fuzzy degree $1$). Moreover, clause (c2) ensures that conditions (i) and (ii) for consistency operators are satisfied.  Finally, clause (c3) ensures the coherency of $\bo$: in $N({\bf A})$, the segment of the chain where $\bo$ is positive, the consistency operator $\bo$ is monotonic, in accordance with the idea that $\bo(x)$ is the fuzzy degree of classicality, from the perspective of the explosion law: ``the closer is $x$  to $1$, the more classical is $x$".  In Figure \ref{neg-bola}, we depict in blue (dashed-line) the graph of the negation $\neg$ in the real BL-chain $[0, 1]_*$, where * is the ordinal sum of {\L}ukasiewicz t-norm in $[0, a]$ and another arbitrary t-norm on the interval $[a, 1]$ and in red (bold line) the graph  of  a  $\bo$ operator compatible with the above postulates.

\begin{figure}[htbp]
\begin{center}
\includegraphics[width=6cm]{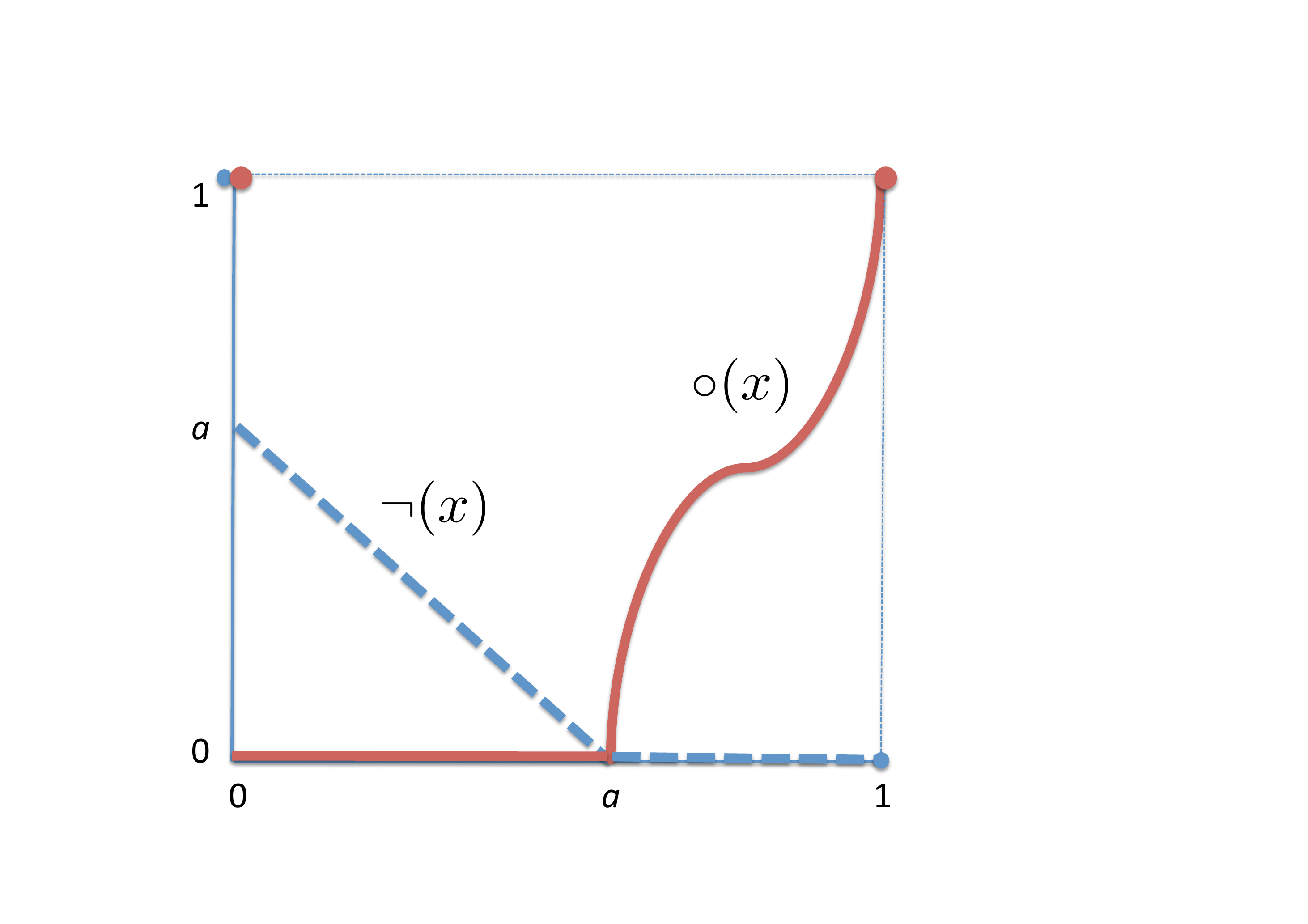}
\caption{Graph of the negation in a BL-algebra of the form $[0, a]_{\textrm{\L}} \oplus [a, 1]_*$ and a graph of a $\bo$ operator satisfying postulates (c1), (c2) and (c3).}
\label{neg-bola}
\end{center}
\end{figure}

As a consequence, we propose the following definition.

\begin{definition} Let  $\logic{L}$ be any semilinear core expansion of \MTL.  Given an axiomatization of $\logic{L}$, we define the logic $\logic{L}_\bo$ as the expansion of $\logic{L}$ in a language which incorporates a new unary connective $\bo$ with  the following axioms: \vspace{0.2cm}

\begin{tabular}{ll}
{\rm(A1)} & $\neg(\varphi\wedge \neg\varphi \wedge \bo\varphi)$\\
{\rm(A2)}  & ${\circ} \bar 1$\\
{\rm(A3)} & ${\circ} \bar 0$\\
\end{tabular}
\vspace{0.2cm}

\noindent
and the following inference rules:  \vspace{0.2cm}

\begin{tabular}{ll}
{\sCNG} \, $\displaystyle \frac{ (\varphi \leftrightarrow \psi) \vee \delta}{ (\bo\varphi \leftrightarrow \bo\psi) \vee \delta}$ \hspace{1cm} & {\rm(Coh)} \,$\displaystyle \frac{(\neg\neg\varphi \wedge (\varphi\to \psi)) \vee \delta}{ (\bo\varphi \to \bo\psi) \vee \delta}$  \vspace{0.4cm} \\
\end{tabular}

\end{definition}

Due to the presence of the rule \sCNG,  $\logic{L}_\bo$ is a Rasiowa-implicative logic, and thus it  is algebraizable in the sense of Blok and Pigozzi, and its algebraic semantics is given by $\logic{L}_\bo$-algebras.

\begin{definition}  $\logic{L}_\bo$-algebras are expansions of $\logic{L}$-algebras with a new unary operation $\bo$ satisfying the following conditions, for all $x, y, z\in A$:
\begin{enumerate}
\item[($\bo 1$)] $ x \wedge \neg x  \wedge \bo(x) = 0$ 
\item[($\bo 2$)] $\bo(1) = \bo(0) = 1$    
\item[($\bo 3$)] if  $(\neg\neg x \wedge (x \to y)) \vee z = 1$ then $(\bo(x) \to \bo(y)) \vee z = 1$     
\end{enumerate}
\end{definition}

Thus, the class $\mathcal{L}_\bo$ of $\logic{L}_\bo$-algebras is a quasivariety,  and since it is the equivalent algebraic semantics of the logic  $\logic{L}_\bo$,  $\logic{L}_\bo$ is (strongly) complete with respect to $\mathcal{L}_\bo$. But since the inference rules \sCNG\ and (Coh) are closed under $\lor$-forms, we know (see Section \ref{prelim-1})
that $\logic{L}_{\bo}$ is also semilinear and hence it is complete with respect to the class of   $\logic{L}_\bo$-chains.

\begin{proposition}[Chain completeness]
The logic  $\logic{L}_\bo$ is strongly complete with respect to the class of $\logic{L}_\bo$-chains.
\end{proposition}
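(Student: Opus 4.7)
My plan is to bootstrap chain completeness from the already-noted algebraic completeness by invoking the semilinearity criterion recalled in Section~\ref{prelim-1}. The paper has already observed that $\logic{L}_\bo$ is Rasiowa-implicative (by virtue of the congruence rule \sCNG) and hence algebraizable, with quasivariety $\mathcal{L}_\bo$ as equivalent algebraic semantics; consequently $\logic{L}_\bo$ is strongly complete with respect to all of $\mathcal{L}_\bo$. The remaining content of the proposition is therefore that $\mathcal{L}_\bo$ is generated, as a quasivariety, by its linearly ordered members, i.e. that $\logic{L}_\bo$ is semilinear.

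To establish semilinearity I would appeal to the characterization cited from \cite{Cintula-Noguera:Handbook}: an expansion of a semilinear core expansion of \MTL\ by a family of finitary inference rules (R$_i$): from $\Gamma_i$ derive $\varphi_i$ is itself semilinear iff each rule (R$_i^\lor$): from $\Gamma_i\lor p$ derive $\varphi_i\lor p$ is derivable (for $p$ fresh). The base logic $\logic{L}$ is semilinear by hypothesis, so the only rules to verify are the new ones: \sCNG\ and (Coh). But these rules are already stated in $\lor$-disjoined form (with an arbitrary side formula $\delta$). Taking $\delta' := \delta\lor p$ as the side parameter in an instance of \sCNG\ (respectively (Coh)) immediately yields the required $\lor$-form, using only associativity/commutativity of $\lor$, which are theorems of $\logic{L}$. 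So both $\lor$-forms are derivable, and semilinearity follows.

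Once semilinearity is in hand, the quasivariety $\mathcal{L}_\bo$ is generated (as a quasivariety) by the class of $\logic{L}_\bo$-chains, and the algebraizability of $\logic{L}_\bo$ transfers the strong completeness w.r.t.\ $\mathcal{L}_\bo$ into strong completeness w.r.t.\ the subclass of chains, yielding the statement.

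I do not expect any serious obstacle: the real design work was done when formulating \sCNG\ and (Coh) in their disjunctive form, precisely so that their $\lor$-forms become trivial instances of the rules themselves. The only delicate point to be careful about is that the cited characterization of semilinearity applies to expansions of semilinear core expansions of \MTL\ (not only of ($\triangle$-)core fuzzy logics), which is exactly the setting fixed at the beginning of Section~\ref{secbola01}; this ensures that the theorem applies directly to $\logic{L}_\bo$ without further hypotheses.
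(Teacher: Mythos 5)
Your proposal is correct and follows essentially the same route as the paper: the paper likewise notes that $\logic{L}_\bo$ is algebraizable with the quasivariety $\mathcal{L}_\bo$ as equivalent algebraic semantics, and then observes that the rules \sCNG\ and (Coh) are closed under $\lor$-forms (precisely because the side formula $\delta$ absorbs the fresh variable $p$), so the semilinearity criterion from Section~\ref{prelim-1} applies and yields completeness with respect to $\logic{L}_\bo$-chains. Your explicit substitution $\delta' := \delta \lor p$ is exactly the detail the paper leaves implicit.
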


It is worth pointing out that the above conditions on $\bo$ in a linearly ordered $\logic{L}_\bo$-algebra faithfully capture the three intended properties (c1)-(c3) that were required to such $\bo$ operator at the beginning  of this section.
Indeed, one can easily show the following lemma.

\begin{lemma} Let $\bf A$ be a L-chain and let $\bo: A \to A$ a mapping. Then $\bo$ satisfies conditions (c1), (c2) and (c3) iff $\bf A$ expanded with $\bo$ is a $\logic{L}_\bo$-chain.
\end{lemma}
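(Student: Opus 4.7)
The proof is a routine translation between the three ``postulates'' (c1)--(c3) and the three algebraic conditions $(\bo 1)$--$(\bo 3)$ interpreted on a totally ordered algebra. The only point requiring a moment's thought is the $\lor$-form of $(\bo 3)$; everything else is essentially a rewording. I would proceed by proving the two implications separately, handling one axiom at a time.

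For the direction (c1),(c2),(c3) $\Rightarrow$ $\bo 1,\bo 2,\bo 3$: Axiom $(\bo 1)$ follows from (c1) by cases: if $x\wedge\neg x \neq 0$ then (c1) forces $\bo(x)=0$, so the meet is $0$; if $x\wedge\neg x=0$ the meet is already $0$. Axiom $(\bo 2)$ is literally (c2). For $(\bo 3)$, assume $(\neg\neg x\wedge(x\to y))\vee z = 1$ in the chain $\bf A$. Since $\bf A$ is linearly ordered, $a\vee b=1$ iff $a=1$ or $b=1$, so either $z=1$ (and the conclusion $(\bo(x)\to\bo(y))\vee z = 1$ is immediate), or else $\neg\neg x = 1$ and $x\to y = 1$, i.e.\ $\neg x = 0$ and $x\leq y$. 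In the latter case, (c3) gives $\bo(x)\leq\bo(y)$, i.e.\ $\bo(x)\to\bo(y)=1$, so the disjunction is again $1$.

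For the converse direction: Condition (c2) is $(\bo 2)$ verbatim. For (c1), suppose $x\wedge\neg x\neq 0$; from $(\bo 1)$ we have $x\wedge\neg x\wedge\bo(x)=0$, and since $\bf A$ is a chain (so meets coincide with minima), if none of $x$, $\neg x$, $\bo(x)$ could be $0$ we would get a contradiction; hence $\bo(x)=0$. For (c3), assume $\neg x = 0$ and $x\leq y$, so that $\neg\neg x = \neg 0 = 1$ and $x\to y=1$. Then taking $z=0$ in $(\bo 3)$, the hypothesis $(\neg\neg x\wedge(x\to y))\vee 0 = 1$ holds, so $(\bo(x)\to\bo(y))\vee 0 = 1$, which on a chain gives $\bo(x)\to\bo(y)=1$, i.e.\ $\bo(x)\leq\bo(y)$.

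I do not expect any genuine obstacle: the proof is a clean case analysis exploiting the crucial fact that disjunctions in a chain are realized by one of the disjuncts. If I had to flag a subtle point for the write-up, it would be making explicit the equivalence, on an L-chain, between the schematic $\lor$-form of the coherence rule (Coh) and the pointwise monotonicity clause (c3), since this is the step where the semilinear formulation of the rule is essential.
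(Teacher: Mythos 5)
Your proof is correct and follows essentially the same route as the paper's: the left-to-right direction is a direct implication of each $(\bo i)$ from the corresponding (c$i$), and the converse rests on the same two facts about chains, namely that $x\wedge y=0$ iff $x=0$ or $y=0$ and $x\vee y=1$ iff $x=1$ or $y=1$. The paper's proof merely states these observations and declares the equivalences obvious, whereas you spell out the case analysis (including the instantiation $z=0$ in $(\bo 3)$), which is a harmless elaboration of the identical argument.
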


\begin{proof} The implication from left to right is immediate since each condition (c$i$) implies condition $(\bo i)$ for $i = 1,2,3$, actually (c2) $= (\bo 2)$. For the other direction, it is enough to observe that in a chain it holds that $x \land y  = 0$ iff $x = 0$ or $y = 0$, and $x \lor y = 1$ iff either $x = 1$ or $y = 1$.  Then it is obvious that $(\bo1)$ and $(\bo 3)$ are indeed equivalent to (c1) and (c3) respectively.
\end{proof}

\begin{example} (1) Let $\logic{L}$ be  the logic  of a t-norm  which is an ordinal sum of a {\L}ukasiewicz component and a  G\"odel component with an  idempotent separating point $0 < a < 1$ (a non-$SMTL$ chain denoted $\L \oplus G$ such that $N(\L \oplus G) = [a,1)$). Then an $\bo$ operator in the corresponding standard algebra is any function $\bo: [0, 1] \to [0, 1]$  such that :
\begin{itemize}
\item[(i)] $\bo(x) = 1$  if $x \in \{0,1\}$, 
\item[(ii)] $\bo(x) = 0$ if $x \in (0,a)$ (where $x \land \neg x \neq 0$),
\item[(iii)] $\bo$ is  not decreasing in $N(\L \oplus G) = [a,1)$ (where $\neg x = 0$).
\end{itemize}
Therefore there are as many consistency operators as non-decreasing functions over the interval $[a,1]$ with values in $[0, 1]$.

(2) Let $\logic{L} = \L$ be   {\L}ukasiewicz logic, i.e. the logic of the {\L}ukasiewicz t-norm complete with respect to the standard chain $[0,1]_{\textrm{\L}}$. Since the negation is involutive, we have $N([0,1]_{\textrm \L}) = \emptyset$, and thus  there is a unique $\bo$ operator definable on the \L ukasiewicz standard chain: the one defined as $\bo(x) = 1$ if $x \in \{0,1\}$, and $\bo(x) = 0$ otherwise.
\end{example}

We can now prove that the logic  $\logic{L}_\bo$ is a conservative expansion of  $\logic{L}$ in the following strong sense.

\begin{proposition}[Conservative expansion]\label{p:ConsExp}
Let $\mathcal{L}$ be the language of\/ $\logic{L}$. For every set $\Gamma \cup \{\varphi\}$ of $\mathcal{L}$-formulas, $\Gamma \vdash_{\logic{L}_{\bo}} \varphi$ iff $\Gamma \vdash_{\logic{L}} \varphi$.
\end{proposition}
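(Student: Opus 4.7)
The right-to-left direction is trivial, since every axiom and rule of $\logic{L}$ is an axiom (resp.\ rule) of $\logic{L}_\bo$ and the new axioms and rules of $\logic{L}_\bo$ do not affect derivability among $\mathcal{L}$-formulas other than potentially producing more consequences.

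For the non-trivial direction, the plan is to argue contrapositively using the chain-completeness theorems available for both logics. Suppose $\Gamma \not\vdash_\logic{L} \varphi$, where $\Gamma\cup\{\varphi\}\subseteq Fm_\mathcal{L}$. Since $\logic{L}$ is a semilinear core expansion of \MTL, it is strongly complete with respect to the class of $\logic{L}$-chains, so there is an $\logic{L}$-chain $\mathbf{A}$ and an $\mathbf{A}$-evaluation $e$ such that $e[\Gamma]\subseteq\{1\}$ while $e(\varphi)\neq 1$.

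The key step is to expand $\mathbf{A}$ to an $\logic{L}_\bo$-chain $\mathbf{A}'$ by choosing a suitable interpretation of $\bo$. The simplest choice works: define $\bo\colon A\to A$ by $\bo(x)=1$ if $x\in\{0,1\}$ and $\bo(x)=0$ otherwise. Then (c2) holds by definition; (c1) holds because $\bo(x)=0$ whenever $x\notin\{0,1\}$, while for $x\in\{0,1\}$ we have $x\wedge\neg x=0$; and (c3) is immediate, since in the only relevant case (where $\neg x=0$ and $x\notin\{0,1\}$) we have $\bo(x)=0\leq \bo(y)$, and when $x=1$ the hypothesis $x\leq y$ forces $y=1$ so that $\bo(x)=\bo(y)=1$. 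By the lemma characterising $\logic{L}_\bo$-chains, $\mathbf{A}'$ is indeed an $\logic{L}_\bo$-chain.

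Finally, since $\Gamma\cup\{\varphi\}$ contains no occurrences of $\bo$, the same assignment $e$ is also an $\mathbf{A}'$-evaluation, and its values on $\mathcal{L}$-formulas coincide with those computed in $\mathbf{A}$. Hence $e[\Gamma]\subseteq\{1\}$ and $e(\varphi)\neq 1$ in $\mathbf{A}'$ as well. By chain completeness of $\logic{L}_\bo$ (already established just before the statement), we conclude $\Gamma\not\vdash_{\logic{L}_\bo}\varphi$, as required. The only potential obstacle is verifying that an arbitrary $\logic{L}$-chain admits an expansion satisfying $(\bo 1)$--$(\bo 3)$; this is handled uniformly by the trivial characteristic-function choice above, so no case analysis on the specific logic $\logic{L}$ is needed.
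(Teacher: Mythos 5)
Your proof is correct and follows essentially the same route as the paper's: both argue contrapositively via chain completeness and expand the witnessing $\logic{L}$-chain with the minimal consistency operator ($\bo(x)=1$ on $\{0,1\}$, $0$ elsewhere). Your version merely spells out the verification of (c1)--(c3), which the paper leaves implicit.
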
 

\begin{proof}
One implication is trivial. For the other one, assume that $\Gamma \nvdash_{\logic{L}} \varphi$. Then there exists an $\logic{L}$-chain $\alg{A}$ and an $\alg{A}$-evaluation $e$ such that $e[\Gamma] \subseteq \{\overline{1}\}$ and $e(\varphi) \neq \overline{1}$. $\alg{A}$ can be expanded to an $\logic{L}_{\bo}$-chain $\alg{A}'$ e.g.\ by defining $\bo(1)=\bo(0) = 1$ and $\bo(x) = 0$ for every $x \in A \setminus \{0,1\}$. Then $\alg{A}'$ and $e$ provide a counterexample in the expanded language showing that $\Gamma \nvdash_{\logic{L}_{\bo}} \varphi$.
\end{proof}

\begin{theorem}[Strong real completeness]\label{SRC}
The logic  $\logic{L}_{\bo}$ has the \SRC\ if, and only if, $\logic{L}$ has the \SRC.
\end{theorem}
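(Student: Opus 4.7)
The plan is to reduce the statement to an embedding question via Theorem~\ref{SKC} and then handle the two implications separately. Note that $\logic{L}_\bo$ is a semilinear core expansion of $\logic{L}$ (the rules \sCNG\ and (Coh) were presented already in their $\lor$-form), so Theorem~\ref{SKC} applies: $\logic{L}_\bo$ has the \SRC\ iff every countable $\logic{L}_\bo$-chain embeds into a real $\logic{L}_\bo$-chain, and similarly for $\logic{L}$.

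For the ``only if'' direction I would argue as in Proposition~\ref{p:ConsExp}: given a countable $\logic{L}$-chain $\alg{A}$, expand it to an $\logic{L}_\bo$-chain $\alg{A}'$ by setting $\bo(0)=\bo(1)=1$ and $\bo(x)=0$ otherwise (this satisfies (c1)--(c3) trivially). By the \SRC\ of $\logic{L}_\bo$, $\alg{A}'$ embeds into a real $\logic{L}_\bo$-chain $\alg{B}$; taking the $\logic{L}$-reduct of $\alg{B}$ yields a real $\logic{L}$-chain into which $\alg{A}$ embeds. Hence $\logic{L}$ has the \SRC.

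The ``if'' direction is the substantive one. Given a countable $\logic{L}_\bo$-chain $\alg{A}=(\alg A_0,\bo_{\alg A})$, its reduct $\alg A_0$ is a countable $\logic{L}$-chain, so by hypothesis there is an embedding $h\colon\alg A_0\hookrightarrow [0,1]_\ast$ of $\logic{L}$-chains. I need to define a consistency operator $\bo^\ast$ on $[0,1]_\ast$ such that $\bo^\ast\circ h = h\circ \bo_{\alg A}$ and such that the resulting structure is an $\logic{L}_\bo$-chain. By the preceding Lemma it suffices to verify (c1)--(c3). Writing $S=\{x\in[0,1]\mid \neg x=0\}$ (an up-set of $[0,1]$) I propose the definition
\[
\bo^\ast(x)=\begin{cases} 1, & x\in\{0,1\},\\ 0, & x\in(0,1)\setminus S,\\ \sup\{h(\bo_{\alg A}(a))\mid a\in A,\ \neg a=0,\ h(a)\le x\}, & x\in S\setminus\{1\}. \end{cases}
\]
Conditions (c1) and (c2) are immediate. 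For (c3), monotonicity on $S$ follows because the indexing set grows with $x$, and one must check compatibility with $h$: when $x=h(a_0)$ with $\neg a_0=0$, coherence (c3) for $\bo_{\alg A}$ together with the fact that $h$ reflects the order gives that the sup is attained at $a_0$, so $\bo^\ast(h(a_0))=h(\bo_{\alg A}(a_0))$. The boundary cases $a_0\in\{0,1\}$ and $a_0\wedge\neg a_0\ne 0$ are verified directly using $h(0)=0$, $h(1)=1$, and that $h$ reflects the condition $x\wedge\neg x\ne 0$.

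The main obstacle is ensuring that the values prescribed on $h(A)\cap S$ by $\bo_{\alg A}$ admit a monotone extension to all of $S$ that still behaves correctly at $1$ and at points of $h(A)$; this is handled by the sup-based definition above, which automatically reconciles the prescribed values with monotonicity because $h$ preserves and reflects both $\le$ and the predicate ``$\neg\cdot=0$''. Once $\bo^\ast$ is constructed, $h$ is an embedding of $\logic{L}_\bo$-chains into the real chain $([0,1]_\ast,\bo^\ast)$, completing the characterization.
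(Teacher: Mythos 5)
Your proposal is correct and follows essentially the same route as the paper: both directions reduce to the embedding characterization of Theorem~\ref{SKC}, and the substantive direction extends the embedding of the $\bo$-free reduct by defining $\bo^\ast$ on the real chain via exactly the same supremum formula (your version, restricted to elements with $\neg a=0$, is in fact the careful reading of the paper's definition). The only cosmetic difference is that the paper dispatches the ``only if'' direction directly from conservativity of the expansion rather than via the trivial $\bo$-expansion and reducts, but both arguments are sound.
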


\begin{proof}
Again, one implication just follows from the fact that $\logic{L}_{\bo}$ is a conservative expansion of $\logic{L}$. For the converse one assume that $\logic{L}$ has the \SRC. We have to show that any countable $\logic{L}_{\bo}$-chain can be embedded into a standard $\logic{L}_{\bo}$-chain. Let  $\alg{A}$ be  a countable $\logic{L}_{\bo}$-chain. By Theorem~\ref{t:Char-SKC-FSKC}, we know that the $\bo$-free reduct of $\alg{A}$ is embeddable into a (standard) $\logic{L}$-chain $\alg{B}$ on $[0, 1]$. Denote this embedding by $f$ and  define $\bo'\colon [0, 1] \to [0, 1]$ in the following way:

\begin{itemize}
\item[(i)] $\bo'(0) = \bo'(1) = 1$
\item[(ii)] $\bo'(x) = 0$ for $x$ such that $x > 0$ and $\neg x > 0$.
\item[(iii)] $\bo'$ restricted to the interval $\{ x \in (0, 1) \mid  \neg x = 0\}$ is defined as $\bo'(z) = \sup \{f(\bo(x))  \ :  \ x \in A, f(x) \leq z\}$
\end{itemize}
So defined, $\bo'$ is non-decreasing on $\{ x \in [0, 1]  \ : \  \neg x = 0\}$ such that ${\bo}'(f(x)) = f(\bo(x))$ for any $x \in A$ and hence $\alg{B}$ expanded with $\bo'$ is a standard $\logic{L}_{\bo}$-chain where $\alg{A}$ is embedded.
\end{proof}

Taking into account that for a (semilinear core expansion of \MTL) logic $\logic{L}$ being finite strong real completeness is equivalent to the fact that every countable $\logic{L}$-chain is partially embeddable into some  $\logic{L}$-chain over $[0, 1]$ (see Theorem \ref{SKC}), the following corollary can be easily proved by the same technique used in the above Theorem \ref{SRC}.

\begin{corollary}[Finite strong standard completeness]\label{FSRC}
A logic $\logic{L}$ has the \FSRC\ if, and only if, $\logic{L}_{\bo}$ has the \FSRC.
\end{corollary}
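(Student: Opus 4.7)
The plan is to reduce both directions to the characterization of FSRC in Theorem~\ref{t:Char-SKC-FSKC} (FSRC holds iff every countable chain in the language is partially embeddable into a standard chain) and to adapt the construction from Theorem~\ref{SRC} from total to partial embeddings. The ``only if'' direction is immediate: because $\logic{L}_{\bo}$ is a conservative expansion of $\logic{L}$ by Proposition~\ref{p:ConsExp}, any counterexample coming from a standard $\logic{L}$-chain to a consequence in the $\bo$-free fragment lifts to a counterexample in the expanded language, so FSRC of $\logic{L}_{\bo}$ restricts to FSRC of $\logic{L}$.

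For the non-trivial direction I would start with a countable $\logic{L}_{\bo}$-chain $\alg{A}$ and an arbitrary finite partial subalgebra $A_0 \subseteq A$. First I would enlarge $A_0$ to a still-finite set containing $0$ and $1$ and closed under $\bo$; this is harmless because on any $\logic{L}_{\bo}$-chain the image of $\bo$ lives in $\{0,1\} \cup N(\alg{A})$, and moreover ($\bo 1$)--($\bo 2$) force $\bo(\bo(x)) \in \{0,1\}$, so one round of adjoining $\{\bo(x) : x \in A_0\}$ already closes $A_0$ under $\bo$. The $\bo$-free reduct of this enlarged $A_0$ is a finite partial subalgebra of the $\bo$-free reduct of $\alg{A}$, which is a countable $\logic{L}$-chain, so by FSRC of $\logic{L}$ and Theorem~\ref{t:Char-SKC-FSKC} there is a one-to-one mapping $f: A_0 \to [0,1]$ into some standard $\logic{L}$-chain $\alg{B}$ preserving all the defined $\bo$-free operations.

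I would then expand $\alg{B}$ to a standard $\logic{L}_{\bo}$-chain by a pointwise variant of the definition used in Theorem~\ref{SRC}: set $\bo'(0) = \bo'(1) = 1$, $\bo'(y) = 0$ whenever $y \in (0,1)$ has $\neg y > 0$ in $\alg{B}$, and on $N(\alg{B}) = \{y \in (0,1) : \neg y = 0\}$ define $\bo'$ as any non-decreasing step function that extends the finite assignment $f(x) \mapsto f(\bo(x))$ indexed by those $x \in A_0$ with $\neg x = 0$ in $\alg{A}$. The main obstacle, which I expect to be routine once the right setup is in place, is verifying that this finite partial assignment is consistent with such an extension: well-definedness requires that $\neg x = 0$ in $\alg{A}$ entails $\neg f(x) = 0$ in $\alg{B}$, which follows from $f$ preserving the defined operations since $\bar{0} \in A_0$; and the needed monotonicity follows from postulate (c3) in $\alg{A}$ together with the fact that $f$ is strictly order-preserving on the chain $A_0$, so the assignment $f(x) \mapsto f(\bo(x))$ respects the order.

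Once $\bo'$ is in place, the expanded algebra $\alg{B}'$ satisfies (c1)--(c3) by construction and is therefore a standard $\logic{L}_{\bo}$-chain, and by choice of $\bo'$ the map $f : A_0 \to \alg{B}'$ becomes a partial $\logic{L}_{\bo}$-embedding. Since $A_0$ was an arbitrary finite partial subalgebra of an arbitrary countable $\logic{L}_{\bo}$-chain, a second application of Theorem~\ref{t:Char-SKC-FSKC}, now in the $\logic{L}_{\bo}$ direction, delivers the desired FSRC for $\logic{L}_{\bo}$.
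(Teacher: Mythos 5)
Your route is the paper's own: the authors prove this corollary precisely by redoing the construction of Theorem~\ref{SRC} with the partial-embeddability criterion of Theorem~\ref{t:Char-SKC-FSKC} in place of the total one, and your treatment of the easy direction via Proposition~\ref{p:ConsExp} is also what is intended. So the skeleton is right, and you supply more detail than the paper does.

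Two of your supporting claims are, however, false, and the second one leaves a hole in the verification. First, the image of $\bo$ on an $\logic{L}_\bo$-chain need \emph{not} lie in $\{0,1\}\cup N(\alg{A})$, and $(\bo 1)$--$(\bo 2)$ do not force $\bo(\bo(x))\in\{0,1\}$: on $N(\alg{A})$ the operator is only required to be non-decreasing, so it may map $N(\alg{A})$ into itself nontrivially (e.g.\ $\bo(x)=(x+1)/2$ on $N=[a,1)$ in the $\mathrm{\L}\oplus\G$ example of the paper), and the orbit of a point under $\bo$ can then be infinite, so no finite number of rounds closes $A_0$ under $\bo$. This is harmless only because closure is not needed: partial embeddability asks you to preserve the \emph{partial} operation, so it suffices that $\bo$ be defined on the original $A_0$, which a single round of adjoining $\bo$-images does achieve. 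Second, your well-definedness check treats only the case $\neg x=0$. For $x\in A_0$ with $0<x$ and $0<\neg x$ you must get $\bo'(f(x))=f(\bo(x))=0$, and under your definition of $\bo'$ this needs $\neg f(x)>0$ in $\alg{B}$; if $\neg x\notin A_0$ the partial embedding of the $\bo$-free reduct does not guarantee this, $f(x)$ may land in $N(\alg{B})$, and your non-decreasing extension is then free to take a nonzero value there, breaking the embedding. The repair is routine: also adjoin $\neg x$ for each $x\in A_0$ before invoking the \FSRC\ of $\logic{L}$ (so that $\neg f(x)=f(\neg x)\neq 0$ by injectivity), or else add such points $f(x)$ to your finite assignment with value $0$, which remains order-consistent since any such $x$ lies below every element of $A_0\cap N(\alg{A})$. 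With these corrections the argument is exactly the one the paper alludes to.
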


\subsection{Some interesting extensions of the logics $\logic{L}_\bo$} \label{extensions}

As shown in the examples above of operators $\bo$ in $\logic{L}_\bo$-chains $\bf A$, these operators are completely determined over the set $\{ x \in A \ : \ x \land \neg x \neq 0\}$, but they can be defined in different ways in the interval where $\neg x = 0$. In this section we first consider adding a consistency operator  $\bo$ to logics whose associated chains have no elements $x < 1$ such that $\neg x = 0$ (chains {\bf A} such that $N({\bf A}) = \emptyset$). These logics can be obtained from any $\logic{L}$ by adding a suitable inference rule, and will be denoted as $\logic{L}^{\neg \neg}$.
In the second subsection we focus on logics $\logic{L}_\bo$ where $\bo$ is crisp, and in particular we consider the two extremal cases of these operators, namely those such that $\bo(x) = 0$ for all  $x \in N({\bf A})$ and those such that  $\bo(x) = 1$ for all $x \in N({\bf A})$.

\subsubsection{The case of $\logic{L}^{\neg \neg}$ logics with $\bo$ operators}

In this subsection we study the case of logics $\logic{L}_\bo$ whose associated $\logic{L}$-chains are those where $\neg x = 0$ necessarily implies  that  $x = 1$. First, from a logic $\logic{L}$ we will define the logic $\logic{L}^{\neg \neg}$ and then we will add the consistency operator.

The logic $\logic{L}^{\neg \neg}$ is defined as the extension of L by adding the following rule:
\begin{itemize}
\item[] {\rm($\neg\neg$)} \, $\displaystyle \frac{ \neg \neg \varphi}{ \varphi}$
\end{itemize}

Obviously $\logic{L}^{\neg \neg}$ is complete with respect to the corresponding quasi-variety of $\logic{L}^{\neg \neg}$-algebras, that is, the class of $\logic{L}$-algebras satisfying the quasi-equation ``If $\neg\neg x = 1$ then $x = 1$'', or equivalently the quasi-equation ``If $\neg x = 0$ then $x = 1$''.

\begin{remark}
In general,  the class of $\logic{L}^{\neg \neg}$-algebras is not a variety. For instance, in \cite{Noguera-Esteva-Gispert:SomeVarietiesMTL} it is proved that the class of  weak nilpotent minimum algebras satisfying the quasi-equation $\neg  x = 0 \Rightarrow x = 1$ is a quasi-variety that is not a variety.
For instance, take the WNM-chain $\bf C$ over the real unit interval defined by the negation:
$$n(x) = \left \{
\begin{array}{ll}
\vspace{0.5 cm}
1-x & {\rm if} \ x \in [0,\frac{1}{5}] \cup [ \frac{4}{5},1]\\
\vspace{0.5 cm}
\frac{1}{5}, & {\rm if} \ x \in [\frac{3}{5},\frac{4}{5}]\\
\frac{4}{5} -x, & {\rm if} \ x \in [\frac{1}{5},\frac{3}{5}]
\end{array}
\right .$$
Take the filter $F = [ \frac{4}{5},1]$. Then an easy computation shows that the quotient algebra ${\bf C}/\equiv_F$ is isomorphic to the standard WNM-chain ${\bf C}_F$ defined by the negation:

$$n_F(x) = \left \{
\begin{array}{ll}
\vspace{0.5 cm}
0, & {\rm if} \ x \in [\frac{2}{3},1]\\
\frac{2}{3}-x, & {\rm if} \ x \in (0,\frac{2}{3}]
\end{array}
\right .$$
\vspace{0.2cm}

\noindent Clearly  $\{x \in C \ : \ \neg x = 0\} = \{1\}$ but $\{x \in C_F  \ : \ \neg x = 0\} \neq \{1\}$, i.e.\ $\bf C$ belongs to the quasi-variety of WNM$^{\neg \neg}$-algebras but ${\bf C}_F$ does not, so the class of $\logic{L}^{\neg \neg}$-algebras is not closed by homomorphisms.

\end{remark}

Moreover $\logic{L}^{\neg \neg}$ is a semilinear logic since it satisfies the following proposition.

 \begin{lemma}\label{p:neg}
The following rule
\begin{itemize}
\item[] {\rm($\neg\neg$)$^\lor$} \, $\displaystyle \frac{ \neg \neg \varphi \lor \delta}{ \varphi \lor \delta}$
\end{itemize}
is derivable in $\logic{L}^{\neg \neg}$.
\end{lemma}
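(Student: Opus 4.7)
The plan is to reduce the $\lor$-form rule $(\neg\neg)^\lor$ to an MTL-theorem together with a single application of the original rule $(\neg\neg)$. Concretely, the key step will be to establish that
\[
\vdash_{\logic{L}} (\neg\neg\varphi \lor \delta) \to \neg\neg(\varphi \lor \delta),
\]
after which the proof goes: from the premise $\neg\neg\varphi \lor \delta$, modus ponens yields $\neg\neg(\varphi \lor \delta)$, and then the rule $(\neg\neg)$ (instantiated with $\varphi \lor \delta$ in place of $\varphi$) delivers the conclusion $\varphi \lor \delta$.

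To justify the implication above, I would argue in any MTL-algebra (and hence syntactically in \MTL, and thus in $\logic{L}$) using two standard facts: (a) $\neg$ is antimonotone, so $\neg\neg$ is monotone; (b) $\varphi \to \neg\neg\varphi$ is a theorem. From $\varphi \to \varphi \lor \delta$ (lattice join introduction, provable in MTL) and the monotonicity of $\neg\neg$ we obtain $\neg\neg\varphi \to \neg\neg(\varphi \lor \delta)$. From $\delta \to \neg\neg\delta$ composed with the analogous $\neg\neg\delta \to \neg\neg(\varphi \lor \delta)$ we get $\delta \to \neg\neg(\varphi \lor \delta)$. Applying the $\lor$-elimination principle (from $\alpha \to \gamma$ and $\beta \to \gamma$ conclude $\alpha \lor \beta \to \gamma$, available in MTL) yields the desired implication.

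Put together, the derivation in $\logic{L}^{\neg\neg}$ reads: (i) the above implication is a theorem of $\logic{L}$, so by modus ponens from the assumption $\neg\neg\varphi \lor \delta$ we derive $\neg\neg(\varphi \lor \delta)$; (ii) an application of the rule $(\neg\neg)$ gives $\varphi \lor \delta$. There is no real obstacle here: the only thing to keep in mind is that $(\neg\neg)$ is a genuine inference rule of $\logic{L}^{\neg\neg}$, not merely valid under empty premises, so its use under the hypothesis $\neg\neg\varphi \lor \delta$ is legitimate in the sense of derivations of $\logic{L}^{\neg\neg}$. The routine check that $(\neg\neg\varphi \lor \delta) \to \neg\neg(\varphi \lor \delta)$ is indeed an \MTL-theorem is the only calculation, and it is standard lattice/negation manipulation.
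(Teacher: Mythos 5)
Your proposal is correct and follows essentially the same route as the paper: both arguments show that the premise $\neg\neg\varphi\lor\delta$ entails $\neg\neg(\varphi\lor\delta)$ in \MTL\ (the paper via the intermediate step $\neg\neg\varphi\lor\neg\neg\delta$, you via proving the implication outright and using one modus ponens), and then conclude with a single application of the rule $(\neg\neg)$ instantiated at $\varphi\lor\delta$. The difference is purely presentational.
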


\begin{proof}
Since $\delta \to \neg\neg \delta$ is a theorem of MTL, it is clear that $ \neg \neg \varphi \lor \delta \vdash_{\logic{L}} \neg \neg \varphi \lor \neg \neg \delta$, and so $ \neg \neg \varphi \lor \delta \vdash_{\logic{L}} \neg \neg (\varphi \lor \delta)$ as well. Then, by using the rule $(\neg \neg)$ we have that  $\varphi \lor \delta$ is derivable in $\logic{L}^{\neg \neg}$ from the premise $\neg \neg \varphi \lor \delta$.
\end{proof}

\begin{corollary}[Chain completeness]
The logic  $\logic{L}^{\neg \neg}$ is semilinear and thus strongly complete with respect to the class of $\logic{L}^{\neg \neg}$-chains.
\end{corollary}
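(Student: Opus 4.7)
The plan is to derive this corollary almost immediately from the preceding Lemma~\ref{p:neg} together with the general characterization of semilinearity recalled in Section~\ref{prelim-1}. Recall that result: an expansion of a ($\triangle$-)core fuzzy logic obtained by adding new finitary inference rules is semilinear if and only if, for each new rule ``from $\Gamma$ derive $\varphi$'', its $\lor$-form ``from $\Gamma \lor p$ derive $\varphi \lor p$'' is derivable in the expanded logic. So the first step is to note that $\logic{L}^{\neg\neg}$ is obtained from the semilinear core expansion $\logic{L}$ of \MTL\ by adding the single new rule $(\neg\neg)$.

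The second step is to invoke Lemma~\ref{p:neg}, which asserts precisely that the $\lor$-form of $(\neg\neg)$, namely $(\neg\neg)^\lor$, is derivable in $\logic{L}^{\neg\neg}$. (Strictly speaking the characterization requires the $\lor$-form with a fresh variable $p$ in place of $\delta$, but the derivation in Lemma~\ref{p:neg} works uniformly for any formula $\delta$, so it applies in particular to a propositional variable not occurring in the premise.) Together with the fact that $\logic{L}$ was already semilinear to begin with, the characterization immediately yields that $\logic{L}^{\neg\neg}$ is semilinear.

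Finally, by the general completeness result for semilinear logics (again recalled in Section~\ref{prelim-1}), a semilinear (expansion of a core) fuzzy logic is strongly complete with respect to the class of its linearly ordered algebras. Applying this to $\logic{L}^{\neg\neg}$ gives strong completeness with respect to the class of $\logic{L}^{\neg\neg}$-chains, which is the conclusion of the corollary.

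Since both ingredients (the characterization theorem and Lemma~\ref{p:neg}) are already at our disposal, there is no genuine obstacle here; the ``main step'' is just to observe that $(\neg\neg)^\lor$ is exactly the $\lor$-form of the only new rule, so the characterization applies. The proof should therefore be essentially a one-paragraph application of the two prior results, with no further calculation.
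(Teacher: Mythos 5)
Your proposal is correct and follows exactly the paper's own argument: the paper likewise invokes Lemma~\ref{p:neg} to conclude that the rule $(\neg\neg)$ is closed under $\lor$-forms, applies the characterization of semilinearity from Section~\ref{prelim-1}, and deduces chain completeness. Your parenthetical remark about the fresh variable $p$ versus an arbitrary $\delta$ is a reasonable extra precaution that the paper leaves implicit.
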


\begin{proof}
Since the inference rule $(\neg \neg)$ is  closed under $\lor$-forms, we know
that $\logic{L}^{\neg \neg}$ is also semilinear (see Section \ref{prelim-1}) and hence it is complete with respect to the class of   $\logic{L}^{\neg \neg}$-chains
\end{proof}
\begin{remark} Obviously, if $\logic{L}$ is an IMTL logic (i.e. a logic where its negation is involutive), then $\logic{L}^{\neg\neg} = \logic{L}$. Also, for interested readers, we could notice that $\logic{BL}^{\neg \neg}$ is actually {\L}ukasiewicz logic {\L} since the only BL-chains satisfying the quasi-equation ``if $\neg x = 0$ then $x = 1$'' are the involutive BL-chains, i.e MV-chains. This is not the case for $\logic{MTL}^{\neg \neg}$ which is not equivalent to IMTL {\rm(see the WNM logic defined in the previous remark, that satisfy rule {\rm($\neg\neg$)} and it is not \IMTL.)}.
\end{remark}
Now we add the consistency operator $\bo$ to the logic $\logic{L}^{\neg \neg}$. By this we mean to expand the language with  an unary connective $\bo$ and to add  the axioms  (A1), (A2) and (A3) and the inference rules \sCNG\ and (Coh). Obviously, the resulting logic $\logic{L}_\bo ^{\neg \neg}$ is complete with respect to the quasi-variety of $\logic{L}_\bo^{\neg \neg}$-algebras and with respect to the class of chains of the quasi-variety. The completeness theorems with respect to real chains
also apply to $\logic{L}_\bo ^{\neg \neg}$. Moreover we can easily prove that the following schemes and inference rule are provable and derivable respectively in $\logic{L}_\bo^{\neg \neg}$:
\begin{itemize}
\item[(B1)] $\neg \bo\varphi \lor \varphi \lor \neg \varphi$,
\item[(B2)] $\bo(\varphi \toto \psi) \to (\bo \varphi \toto \bo \psi)$,
\item[(B3)] $\bo(\varphi \lor \psi) \to \bo \varphi \lor \psi$,
\item[(B4)] $\bo \bar{0}$
\item[] \hspace{-1cm}{($\bo$Nec)} \, $\displaystyle \frac{  \varphi}{\bo\varphi}$
\end{itemize}

These properties allows us to provide a simpler axiomatization of $\logic{L}_\bo ^{\neg \neg}$.

\begin{theorem}
$\logic{L}_\bo ^{\neg \neg}$ can be axiomatized by adding to the axiomatization of $\logic{L}^{\neg \neg}$ the axioms (B1)-(B4) and the rule ($\bo$Nec).
\end{theorem}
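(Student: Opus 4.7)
The plan is to show that the two axiomatizations are interderivable, hence define the same logic. One direction --- that (B1)--(B4) are theorems and $(\bo$Nec$)$ is a derived rule of $\logic{L}_\bo^{\neg \neg}$ --- is already announced just before the theorem; it can be verified either by a short syntactic derivation or semantically, using that in an $\logic{L}_\bo^{\neg \neg}$-chain the condition $N(\alg{A})=\emptyset$ forces $\bo$ to be crisp on $\alg{A}\setminus\{0,1\}$. The substantive task is the converse: inside the system $\logic{L}'$ axiomatized by $\logic{L}^{\neg \neg}$ together with (B1)--(B4) and the rule $(\bo$Nec$)$, derive the original axioms (A1), (A2), (A3) and the two inference rules \sCNG\ and (Coh).

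The three axiom schemes and rule \sCNG\ go quickly. Axiom (A3) coincides with (B4); (A2), i.e.\ $\bo\1$, follows by applying $(\bo$Nec$)$ to the theorem $\1$. For (A1), the MTL-theorem $\neg(\alpha\wedge\beta)\leftrightarrow \neg\alpha\vee\neg\beta$ (a standard consequence of prelinearity) lets one rewrite $\neg(\f\wedge\neg\f\wedge\bo\f)$ as $\neg\f\vee\neg\neg\f\vee\neg\bo\f$; then, since $\f\to\neg\neg\f$ is an MTL-theorem, $\vee$-monotonicity lifts (B1), namely $\neg\bo\f\vee\f\vee\neg\f$, to $\neg\bo\f\vee\neg\neg\f\vee\neg\f$, which is (A1). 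For \sCNG, I would first establish a derived $\vee$-form of $(\bo$Nec$)$, namely $\f\vee\delta \vdash \bo\f\vee\delta$: apply $(\bo$Nec$)$ to $\f\vee\delta$ to obtain $\bo(\f\vee\delta)$, then detach via axiom (B3). Given $(\f\toto\p)\vee\delta$, this derived rule yields $\bo(\f\toto\p)\vee\delta$, and axiom (B2) together with $\vee$-monotonicity gives $(\bo\f\toto\bo\p)\vee\delta$.

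Rule (Coh) is the main effort. From the premise $(\neg\neg\f\wedge(\f\to\p))\vee\delta$ one extracts $\neg\neg\f\vee\delta$ and $(\f\to\p)\vee\delta$ by $\wedge$-elimination inside the disjunction. Lemma~\ref{p:neg}, i.e.\ the $\vee$-form of the rule $(\neg\neg)$, converts the first into $\f\vee\delta$. The crux is then to combine $\f\vee\delta$ with $(\f\to\p)\vee\delta$ into $\p\vee\delta$; this is the standard $\vee$-form of modus ponens in semilinear \MTL, derivable using adjunction for $\conj$, distributivity of $\conj$ over $\vee$ (valid in every \MTL-chain and hence provable in \MTL\ by semilinearity), and the MTL-theorem $\f\conj(\f\to\p)\to\p$. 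Once $\p\vee\delta$ is in hand, the derived $\vee$-form of $(\bo$Nec$)$ gives $\bo\p\vee\delta$, and from the theorem $\bo\p\to(\bo\f\to\bo\p)$ we conclude $(\bo\f\to\bo\p)\vee\delta$, as required. The $\vee$-version of modus ponens is the only genuinely delicate ingredient; everything else is direct manipulation of (B1)--(B4) and $(\bo$Nec$)$.
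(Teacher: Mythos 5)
Your proposal is correct and follows essentially the same route as the paper's proof: soundness of (B1)--(B4) and ($\bo$Nec) is handled semantically via chain-completeness, (A1)--(A3) are recovered exactly as you describe, \sCNG\ is obtained by ($\bo$Nec) followed by (B3) and then (B2), and (Coh) is derived by extracting the two disjuncts, applying the $\vee$-form of $(\neg\neg)$ from Lemma~\ref{p:neg}, performing the $\vee$-form of modus ponens via $\conj$-adjunction and distributivity, and finishing with the $\vee$-lifted ($\bo$Nec) and the theorem $\bo\psi\to(\bo\varphi\to\bo\psi)$. The only cosmetic difference is that you isolate the $\vee$-form of ($\bo$Nec) as a reusable derived rule, which the paper instead applies inline each time.
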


\begin{proof} Let us denote by $\logic{L}^+_\bo$ the resulting new system in the expanded language with $\bo$ obtained from $\logic{L}^{\neg \neg}$ by adding the axioms (B1)-(B4) and the rule ($\bo$Nec).
The axioms (B1)-(B4) and the rule ($\bo$Nec) are clearly sound wrt  $\logic{L}_\bo ^{\neg \neg}$-algebras. Thus we need only to prove that axioms of $\logic{L}_\bo ^{\neg \neg}$ are provable in the new system $\logic{L}^+_\bo$, and that the rules \sCNG\ and (Coh) are also admissible in $\logic{L}^+_\bo$. It is obvious that from (B1) we can obtain (A1), since  $\neg \bo\varphi \lor \varphi \lor \neg \varphi$ implies $\neg \bo\varphi \lor \neg \neg \varphi \lor \neg \varphi$ and the latter is equivalent to (A1). (A2) is an easy consequence of rule 
($\bo$Nec), and (A3) is (B4).
Thus it only remains to prove that  \sCNG\ and (Coh) are derivable  in $\logic{L}^+_\bo$ (in what follows $\vdash$ stands for $\vdash_{\logic{L}^+_\bo}$).

On the one hand, from  $ (\varphi \leftrightarrow \psi) \vee \delta$, using rule ($\bo$Nec), we obtain $ \bo ((\varphi \leftrightarrow \psi) \vee \delta)$, and by (B3) and MP, $ \bo (\varphi \leftrightarrow \psi) \vee \delta$. Finally, by (B2), MP and taking into account the monotonicity of $\lor$, we get $ (\bo \varphi \toto \bo \psi) \vee \delta$. Hence \sCNG\ is derivable.

On the other hand,  from $ \neg \neg \varphi \lor \delta$ and $ (\varphi \to \psi) \lor \delta$, using  $(\neg \neg)^{\lor}$, we obtain $ \varphi \lor \delta$ and $ (\varphi \to \psi) \lor \delta$,  and thus $ ( \varphi \lor \delta) \& ((\varphi \to \psi) \lor \delta)$ as well. Therefore by properties of $\&$, we get $ (\varphi \& ((\varphi \to \psi) \lor \delta)) \lor (\delta \& ((\varphi \to \psi) \lor \delta))$, and by MP and monotonicity for $\&$, we obtain $\psi \lor \delta$. Now,  by ($\bo$Nec) it follows $ \bo(\psi \lor \delta)$ and by (B3),  $ \bo\psi \lor \delta$. 
Since \MTL\ proves the schema $\alpha \to (\beta \to \alpha)$ then 
$\bo\psi  \to (\bo\varphi \to \bo\psi)$ is a theorem of $\logic{L}_\bo^+$. Thus, by monotonicity of $\lor$ and modus ponens, we obtain $ (\bo \varphi \to \bo \psi) \lor \delta$. 
Therefore (Coh) is a derivable rule in MTL$_\bo ^{\neg \neg}$, and hence  in L$_\bo ^{\neg \neg}$ as desired.
\end{proof}

Taking into account that $\logic{L}_\bo ^{\neg \neg}$ is chain-complete, it is  interesting to check how operators $\bo$ can be defined in a $\logic{L}^{\neg \neg}$-chain {\bf A}. Indeed, since in this case $N({\bf A}) = \emptyset$, the $\bo$ operator is completely determined and defined as:
$$\bo(x) = \left \{
\begin{array}{ll}
1, & {\rm if} \ x \in \{0,1\} \\
0, & {\rm otherwise}
\end{array}
\right .$$

The interested reader will have observed that such an operator can also be defined in the algebras of the logic $\logic{L}_\triangle$ (the expansion of $\logic{L}$ with the Monteiro-Baaz $\triangle$ operator) as $\bo(x) = \triangle (x \lor \neg x)$ (cf.~\cite{er-es-fla-go-no:2013}). And conversely, in $\logic{L}_\bo ^{\neg \neg}$-algebras the $\triangle$ operator is also definable as $\triangle x = \bo(x) \land x$. Therefore the following result is easy to prove using chain completeness results for both logics.

\begin{corollary}
$\logic{L}_\bo^{\neg \neg}$-algebras and $(\logic{L}_\triangle)^{\neg \neg}$-algebras are termwise equivalent, hence the logics  $\logic{L}_\bo^{\neg \neg}$ and $(\logic{L}_\triangle)^{\neg \neg}$ themselves are  equivalent.
\end{corollary}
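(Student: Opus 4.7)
The plan rests on chain completeness for both logics together with the observation, already anticipated in the discussion preceding the corollary, that on any $\logic{L}^{\neg\neg}$-chain the set $N(\alg{A})$ is empty, which forces both $\bo$ and $\triangle$ to be the crisp indicator of $\{0,1\}$. Termwise equivalence will then follow from verifying that the candidate translations $t(x)=\bo(x)\wedge x$ (defining $\triangle$ from $\bo$) and $s(x)=\triangle(x\vee\neg x)$ (defining $\bo$ from $\triangle$) reproduce these canonical operators on chains.

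First I would confirm that both $\logic{L}_\bo^{\neg\neg}$ and $(\logic{L}_\triangle)^{\neg\neg}$ are semilinear, so that each is strongly complete with respect to its class of chains. For the $\bo$-version this uses that \sCNG, (Coh) and $(\neg\neg)$ all admit $\lor$-forms (the first two by construction, the last by Lemma~\ref{p:neg}, whose proof is in the $\logic{L}$-language and carries over verbatim); for the $\triangle$-version the same argument applies to the restricted necessitation rule and $(\neg\neg)$. Consequently, to establish termwise equivalence of the two quasi-varieties it suffices to exhibit mutually inverse term translations on chains.

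Next I would pin down the shape of $\bo$ and $\triangle$ on any $\logic{L}^{\neg\neg}$-chain $\alg{A}$. Since $N(\alg{A})=\emptyset$, every $x$ with $0<x<1$ satisfies $x\wedge\neg x\neq 0$ (one of $x, \neg x$ is nonzero and both are below $1$ in a chain, using also that $\neg x=1$ forces $x=0$). Hence postulates (c1)--(c3) leave only $\bo(0)=\bo(1)=1$ and $\bo(x)=0$ for $0<x<1$; analogously the Monteiro--Baaz axioms force $\triangle(1)=1$ and $\triangle(x)=0$ for $x<1$. A direct case check then shows that on such a chain $x\vee\neg x=1$ iff $x\in\{0,1\}$, so $s(x)=\triangle(x\vee\neg x)$ returns the canonical $\bo$, while $\bo(x)\wedge x$ equals $1$ when $x=1$ and $0$ otherwise, returning the canonical $\triangle$. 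Uniqueness then makes the compositions $t\circ s$ and $s\circ t$ the identity on chains.

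Finally, since chain completeness holds on both sides, the equational consequences that encode (a) ``$t$ sends each $\logic{L}_\bo^{\neg\neg}$-chain to a $(\logic{L}_\triangle)^{\neg\neg}$-chain and vice-versa for $s$'', and (b) ``the compositions are identity'', lift from chains to the full quasi-varieties, yielding termwise equivalence; the equivalence of the logics then follows from algebraizability. The only point I expect to require care is the verification that the canonical $\triangle$- and $\bo$-axioms on a chain really collapse to the same crisp operator---this is where the hypothesis $N(\alg{A})=\emptyset$ (i.e.\ the rule $(\neg\neg)$) is essential, because without it the $\bo$ operator has genuine freedom on $N(\alg{A})$ and no term in the $\triangle$-language could recover it.
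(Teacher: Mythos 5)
Your proposal is correct and follows essentially the same route as the paper: the paper's own justification consists precisely of noting that on an $\logic{L}^{\neg\neg}$-chain (where $N(\alg{A})=\emptyset$) the operator $\bo$ is forced to be the crisp indicator of $\{0,1\}$, exhibiting the same two translations $\bo(x)=\triangle(x\vee\neg x)$ and $\triangle x=\bo(x)\wedge x$, and invoking chain completeness of both logics. You have merely spelled out the case checks that the paper leaves implicit.
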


As a consequence, let us mention that, unlike $\logic{L}^{\neg \neg}$, the class of $\logic{L}_\bo^{\neg \neg}$-algebras is always a variety, since this is clearly the case of $(\logic{L}_\triangle)^{\neg \neg}$: indeed, the rule $(\neg\neg)$ can be equivalently expressed in $(\logic{L}_\triangle)^{\neg \neg}$ as the axiom $\triangle(\neg\neg \varphi) \to \varphi$. 

\subsubsection{Logics with crisp consistency operators: minimal and maximal consistency operators}

As previously observed, the consistency operator $\bo$ is non-decreasing in the segments of the chains where $\neg x=0$, producing a kind of `fuzzy degree of classicality'. In the previous section we have analyzed a special case where the operator $\bo$ is crisp in the sense that it takes only the values $0$ and $1$. The aim of this section is to study the general case where $\bo$ is crisp.

\begin{definition} Let  $\logic{L}_\bo^c$ be the logic obtained from $\logic{L}_\bo$ by adding the following axiom:

\begin{itemize}

\item[(c)] $\bo\varphi \vee \neg\bo\varphi$

\end{itemize}

\noindent A $\logic{L}_\bo^c$-algebra $\bf A$ is a $\logic{L}_\bo$-algebra such that $\bo(x)  \lor \neg \bo(x) = 1$ for every $x \in A$.
\end{definition}

Since it  is an  axiomatic extension of the logic $\logic{L}_\bo$, it turns out that $\logic{L}_\bo^c$ is algebraizable, whose equivalent algebraic semantics is given by the quasi-variety of $\logic{L}_\bo^c$-algebras, and semilinear as well, and thus complete with respect the class of $\logic{L}_\bo^c$-chains. From the definition above, it is clear that the operator $\bo$ in any  $\logic{L}_\bo^c$-chain $A$ is such that $\bo(x) \in \{0, 1\}$ for every $x \in A$. Moreover, this implies that the set $\{ x \in A \setminus \{0\} \ : \ \bo(x) = 1\}$ is an interval containing $N(A) \cup \{1\}$. 

Let us consider now the logics corresponding to the minimal and maximal (pointwisely) consistency operators, as announced in the introduction of Section \ref{extensions}.
First, consider $\logic{L}^{min}_\bo$ to be the axiomatic extension of the logic $\logic{L}_\bo$ with the following axiom:

\begin{itemize}

\item[(A4)] $\varphi \lor \neg\varphi \lor \neg\bo \varphi$

\end{itemize}
Since it is an axiomatic extension of $\logic{L}_\bo$,  $\logic{L}^{min}_\bo$ is complete with respect to the class of $\logic{L}^{min}_\bo$-chains, i.e.\ $\logic{L}_\bo$-chains satisfying the equation $x \lor \neg x \lor \neg\bo(x) = 1$. One can readily check that the equation $x \lor \neg x \lor \neg \bo(x) = 1$ holds in an  $\logic{L}_\bo$-chain only in the case that $\bo(x) = 0$ when $0 < x < 1$. Indeed, if $\min(x, \neg x) > 0$ it is clear that $\bo(x)$ has to be $0$, while if $x < 1$ and $\neg x  = 0$ then (A4) forces $\neg \bo(x) = 1$, that is $\bo(x) = 0$. Therefore, the $\bo$-operator in any $\logic{L}^{min}_\bo$-chain  is completely determined, and it is indeed the (pointwisely) minimal one definable in a $\logic{L}_\bo$-chain.

\begin{proposition} The logic $\logic{L}^{min}_\bo$ is  complete with respect to the class of $\logic{L}^{min}_\bo$-chains, i.e.\ $\logic{L}_\bo$-chains where the $\bo$ operator is the minimal one.
\end{proposition}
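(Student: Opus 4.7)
The plan is to reduce the claim to two observations: (i) that $\logic{L}^{min}_\bo$ is semilinear and hence automatically chain-complete with respect to its own chains, and (ii) that the class of $\logic{L}^{min}_\bo$-chains coincides exactly with the class of $\logic{L}_\bo$-chains equipped with the pointwise minimal $\bo$ operator. Point (ii) is, up to re-reading, already contained in the paragraph preceding the proposition; the contribution of the proof is to put it together with (i) and draw the completeness conclusion.

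For (i), I would argue that $\logic{L}^{min}_\bo$ is the axiomatic extension of $\logic{L}_\bo$ by the single schema (A4), with no new inference rules. Since $\logic{L}_\bo$ is a Rasiowa-implicative semilinear logic (its only non-axiomatic rules \sCNG\ and (Coh) were designed to be closed under $\lor$-forms, as noted when $\logic{L}_\bo$ was introduced), and since adding axioms preserves semilinearity by the criterion recalled in Section~\ref{prelim-1} (the $\lor$-form of a rule with empty premises is trivial), it follows that $\logic{L}^{min}_\bo$ is itself semilinear. Strong completeness with respect to the class of $\logic{L}^{min}_\bo$-chains is then immediate from algebraizability plus semilinearity.

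For (ii), I would use the algebraizability of $\logic{L}^{min}_\bo$ to identify $\logic{L}^{min}_\bo$-chains with those $\logic{L}_\bo$-chains $\bf A$ satisfying the equation $x \vee \neg x \vee \neg\bo(x) = 1$ for all $x \in A$. In a chain the join equals $1$ iff one of the joinands is $1$, so the equation holds iff for every $x$ either $x=1$, $\neg x=1$, or $\bo(x)=0$. Combining this with the properties already forced by being a $\logic{L}_\bo$-chain, namely $\bo(0)=\bo(1)=1$ and $\bo(x)=0$ whenever $x \wedge \neg x \neq 0$, the only remaining freedom is on elements of $N({\bf A})$ (where $\neg x = 0$ and $x < 1$); for these, the first two disjuncts fail, forcing $\bo(x)=0$. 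Hence $\bo$ must agree on $N({\bf A}) \cup \{0,1\}$ with the minimal operator, and is therefore exactly the pointwise minimal $\bo$ on~$\bf A$. The converse direction is immediate: the minimal operator assigns $\bo(x) = 0$ outside $\{0,1\}$ in $N({\bf A})$ and $\bo(x) = 0$ elsewhere outside $\{0,1\}$, so (A4) is trivially satisfied.

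Combining (i) and (ii) yields the proposition. I do not expect a real obstacle here: the semilinearity step is a mechanical application of the preservation criterion, and the chain characterization is just a careful bookkeeping of the three cases ($x = 0$ or $1$, $x \wedge \neg x \neq 0$, $x \in N({\bf A})$) already analyzed in the preceding paragraph. The only point worth stating explicitly is that in the chain setting the disjunctive equation (A4) collapses to a case split, which is what makes $\bo$ fully determined rather than merely constrained.
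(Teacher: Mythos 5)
Your proposal is correct and follows essentially the same route as the paper: the paper's own justification is exactly the paragraph preceding the proposition, namely that $\logic{L}^{min}_\bo$ is an axiomatic extension of the semilinear logic $\logic{L}_\bo$ (hence chain-complete), and that in an $\logic{L}_\bo$-chain the equation $x \vee \neg x \vee \neg\bo(x)=1$ collapses to a case split forcing $\bo(x)=0$ for all $0<x<1$, so that $\bo$ is the pointwise minimal operator. Your bookkeeping of the three cases and the converse direction matches the paper's argument.
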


Since  $\logic{L}^{min}_\bo$ are a special kind of $\logic{L}^c_\bo$-chains, this proposition yields that  $\logic{L}^{min}_\bo$ must be an axiomatic extension of $\logic{L}^c_\bo$. Moreover,  it turns out that, for all $x$ in a  $\logic{L}^{min}_\bo$-chain,  $\bo(x)$ coincides with $\triangle(x \vee \neg x)$, where $\triangle$ is the Baaz-Monteiro projection operator, as it happened in the case of  $\logic{L}^{\neg\neg}_\bo$-chains.
Using the fact that both logics $\logic{L}^{min}_\bo$ and $\logic{L}_\triangle$ are chain-complete, it follows that they are inter-definable.

\begin{proposition} The logics $\logic{L}^{min}_\bo$ and $\logic{L}_\triangle$ are inter-definable  by means of the following translations:

\begin{itemize}
\item[(i)] from $\logic{L}^{min}_\bo$ to $\logic{L}_\triangle$: define $\triangle \varphi$ as $\varphi \land \bo\varphi$

\item[(ii)] from $\logic{L}_\triangle$  to $\logic{L}^{min}_\bo$: define $\bo \varphi$ as $\triangle(\varphi \lor \neg \varphi)$.
\end{itemize}

\end{proposition}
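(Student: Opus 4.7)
The plan is to exploit the chain-completeness of both logics (proved earlier for $\logic{L}_\bo$ and its axiomatic extensions, and well-known for $\logic{L}_\triangle$), reducing the inter-definability claim to a routine semantic verification on chains. More precisely, I aim to show that on every $\logic{L}^{min}_\bo$-chain the operation $x \mapsto x \land \bo(x)$ behaves as the Baaz--Monteiro projection, and symmetrically that on every $\logic{L}_\triangle$-chain the operation $x \mapsto \triangle(x \lor \neg x)$ behaves as the minimal consistency operator; then the translations $\tau(\triangle\varphi) = \varphi \land \bo\varphi$ and $\sigma(\bo\varphi) = \triangle(\varphi \lor \neg \varphi)$ transport valid deductions one way and the other, and are mutually inverse up to provable equivalence.

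Step 1: Pin down the shape of the two operators. By the discussion preceding the proposition, in any $\logic{L}^{min}_\bo$-chain one has $\bo(0)=\bo(1)=1$ and $\bo(x)=0$ for $0<x<1$. On any $\logic{L}_\triangle$-chain, $\triangle(1)=1$ and $\triangle(x)=0$ for $x<1$. Step 2: Verify that translation (i) realises $\triangle$ on $\logic{L}^{min}_\bo$-chains: a direct computation gives $x \land \bo(x) = 1$ if $x=1$ and $0$ otherwise, which matches the Baaz--Monteiro $\triangle$; moreover all the axioms $(\triangle 1)$--$(\triangle 5)$ are then easily checked on chains. Step 3: Verify that translation (ii) realises the minimal $\bo$ on $\logic{L}_\triangle$-chains: in any chain $x \lor \neg x = 1$ iff $\neg x = 0$; hence $\triangle(x \lor \neg x) = 1$ iff $x \in \{0\} \cup \{ y : \neg y = 0\}$ and equals $0$ otherwise, which is the description of the minimal consistency operator; in particular axioms (A1)--(A3) and (A4) all hold.

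Step 4: Check that the two translations are mutually inverse on chains. On an $\logic{L}^{min}_\bo$-chain, we must show $\sigma(\tau(\bo\varphi)) = \bo\varphi$ up to equivalence, i.e. $\triangle(\varphi \lor \neg\varphi) = \bo\varphi$ when $\triangle$ is defined by $\tau$; unwinding, this is $(\varphi \lor \neg \varphi) \land \bo(\varphi \lor \neg \varphi) = \bo(\varphi)$, which is a direct case split in the chain. Symmetrically, on an $\logic{L}_\triangle$-chain, $\tau(\sigma(\triangle\varphi)) = \triangle(\varphi \lor \neg\varphi) \land \varphi = \triangle(\varphi)$ is again a case split. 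Step 5: Lift to the logics. Since $\logic{L}^{min}_\bo$ is strongly complete with respect to its chains, and $\logic{L}_\triangle$ likewise, Steps 2--4 give $\vdash_{\logic{L}^{min}_\bo} \sigma(\tau(\varphi)) \leftrightarrow \varphi$ and $\vdash_{\logic{L}_\triangle} \tau(\sigma(\varphi)) \leftrightarrow \varphi$ for every formula $\varphi$, together with the fact that the translations preserve consequence. This gives the claimed inter-definability.

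The only delicate point will be Step 4, namely checking that the round-trip translations yield provably equivalent formulas rather than literally the same ones. Here the point is that because the $\bo$-operator in an $\logic{L}^{min}_\bo$-chain depends only on whether $x$ equals $0$, $1$ or lies strictly between, and because the same information is transparent from $\triangle$ and from $\neg$, the verifications are short case splits. Everything else is the routine soundness/completeness bookkeeping that chain-completeness makes automatic, so no further obstacle is anticipated.
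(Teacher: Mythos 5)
Your overall strategy --- reduce everything to a semantic check on chains via chain-completeness, identify the explicit form of both operators, and verify the two round trips --- is exactly the route the paper takes: the paper justifies the proposition in one line by chain-completeness, after observing that $\bo(x)$ coincides with $\triangle(x\vee\neg x)$ on $\logic{L}^{min}_\bo$-chains. Steps 1, 2, 4 and 5 of your proposal are correct as written.

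Step 3, however, contains a genuine error. In an \MTL-chain it is \emph{not} true that $x\vee\neg x=1$ iff $\neg x=0$: if $\neg x=0$ but $x<1$ then $x\vee\neg x=\max(x,\neg x)=x<1$; and conversely $\max(x,\neg x)=1$ forces $x=1$ or $\neg x=1$, i.e.\ $x\in\{0,1\}$ (since $\neg x=1$ iff $x=0$). Consequently your derived description of $\triangle(x\vee\neg x)$ --- equal to $1$ exactly on $\{0\}\cup\{y:\neg y=0\}$, that is, on $\{0,1\}\cup N(\mathbf{A})$ --- is also wrong, and it is moreover mislabelled: the operator taking value $1$ precisely on $\{0,1\}\cup N(\mathbf{A})$ is the \emph{maximal} consistency operator (the one axiomatized by $\logic{L}^{max}_\bo$), not the minimal one. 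Had your description been correct, translation (ii) would land in $\logic{L}^{max}_\bo$ rather than $\logic{L}^{min}_\bo$ and the proposition would fail. The repaired computation is immediate: $\triangle(x\vee\neg x)=1$ iff $x\vee\neg x=1$ iff $x\in\{0,1\}$, and $=0$ otherwise, which is exactly the minimal consistency operator. With that one-line correction the conclusion of Step 3 stands and the rest of your argument goes through.
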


By (ii) of the above proposition $\bo\varphi$ is equivalent to the formula $\triangle(\varphi \lor \neg \varphi)$ in $\logic{L}_\triangle$. Thus by axiom ($\triangle 1$) the following result (proving the axiom of $\logic{L}^c_\bo$) is obvious.

\begin{lemma}   $\logic{L}^{min}_\bo$ proves the axiom $(c)$, i.e. $\bo\varphi \lor \neg \bo \varphi$.
\end{lemma}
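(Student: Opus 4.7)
The plan is to give a very short proof that leverages the inter-definability result just proved. Since $\logic{L}^{min}_\bo$ is inter-definable with $\logic{L}_\triangle$ via the translation $\bo\varphi \equiv \triangle(\varphi \lor \neg\varphi)$, any theorem of $\logic{L}_\triangle$ translates into a theorem of $\logic{L}^{min}_\bo$. First I would instantiate axiom $(\triangle 1)$, namely $\triangle\psi \lor \neg\triangle\psi$, with $\psi := \varphi \lor \neg\varphi$, obtaining $\triangle(\varphi \lor \neg\varphi) \lor \neg\triangle(\varphi \lor \neg\varphi)$ as a theorem of $\logic{L}_\triangle$. Then I would apply the backward translation (ii) of the previous proposition and the congruence property to conclude that $\bo\varphi \lor \neg\bo\varphi$ is a theorem of $\logic{L}^{min}_\bo$.

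Alternatively, and perhaps more transparently, I would argue semantically using the chain completeness of $\logic{L}^{min}_\bo$ established just before. In any $\logic{L}^{min}_\bo$-chain $\alg{A}$, the analysis preceding the previous proposition shows that the operator $\bo$ is completely determined and crisp, namely $\bo(x) \in \{0,1\}$ for every $x \in A$. Hence in every $\logic{L}^{min}_\bo$-chain the equation $\bo(x) \lor \neg\bo(x) = 1$ holds trivially, since either $\bo(x)=1$ (and the left disjunct is $1$) or $\bo(x)=0$, in which case $\neg\bo(x)=\neg 0 = 1$. By chain completeness of $\logic{L}^{min}_\bo$ this yields $\vdash_{\logic{L}^{min}_\bo} \bo\varphi \lor \neg\bo\varphi$, which is axiom $(c)$.

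There is no real obstacle here: the content of the lemma is already absorbed into the previous proposition and the observation that the minimal consistency operator is crisp. I would choose the semantic argument for expository clarity, mentioning in a single sentence that the syntactic derivation via $(\triangle 1)$ and translation (ii) gives an alternative proof.
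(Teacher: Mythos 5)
Your proposal is correct, and in fact it contains the paper's own argument as one of its two branches: the paper justifies this lemma in exactly one sentence, by observing that via translation (ii) of the preceding proposition $\bo\varphi$ is $\triangle(\varphi\lor\neg\varphi)$ in $\logic{L}_\triangle$, so the claim is immediate from axiom $(\triangle 1)$ --- this is precisely your syntactic alternative. Your preferred semantic route is a genuinely different (and equally valid) presentation: it bypasses the detour through $\logic{L}_\triangle$ altogether and uses only two facts already on the table, namely that $\logic{L}^{min}_\bo$, being an axiomatic extension of $\logic{L}_\bo$, is semilinear and hence complete with respect to $\logic{L}^{min}_\bo$-chains, and that on any such chain axiom (A4) together with $(\bo 2)$ forces $\bo$ to be the crisp minimal operator ($\bo(x)=1$ for $x\in\{0,1\}$ and $\bo(x)=0$ otherwise), so that $\bo(x)\lor\neg\bo(x)=1$ holds pointwise since $\neg 0=1$ in every MTL-chain. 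What the paper's route buys is brevity given the inter-definability proposition already proved; what your semantic route buys is self-containedness, since it does not lean on that proposition (whose own proof, incidentally, also rests on chain completeness). Either argument suffices; there is no gap in your reasoning.
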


Finally, consider the logic $\logic{L}^{max}_\bo$ to be the  extension of the logic $\logic{L}_\bo$ with the following inference rule:

\begin{itemize}
\item[] {\rm($\neg\neg_\bo$)}  \, $\displaystyle \frac{ \neg \neg \varphi \lor \delta}{ \bo\varphi \lor \delta}$
\end{itemize}

Again, since $(\neg\neg_\bo)^{\lor}$ is closed under disjunction, $\logic{L}^{max}_\bo$ is complete with respect to $\logic{L}^{max}_\bo$-chains, i.e. $\logic{L}_\bo$-chains where the following condition holds: if $\neg x = 0$ then
$\bo(x) = 1$. Since $\bo(x) = 0$ for all $x > 0$ such that $\neg x > 0$, then it is clear that $\bo$ is completely determined in such a chain and defined as: $\bo(x ) = 0$ if $0 < x \land \neg x$ and $\bo(x) = 1$ otherwise (i.e.\ if $x \in\{ 0, 1\}$ or  $\neg x = 0$). Hence $\bo$ is the maximal (pointwisely) consistency operator definable in a $\logic{L}$-chain.

\begin{proposition} The logic $\logic{L}^{max}_\bo$ is  complete with respect to the class of  $\logic{L}^{max}_\bo$-chains, i.e.\ $\logic{L}_\bo$-chains where the $\bo$ operator is the maximal one.
\end{proposition}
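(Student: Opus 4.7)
The plan is two-fold: first, establish that $\logic{L}^{max}_\bo$ is semilinear, hence chain-complete, by invoking the $\lor$-form criterion from Section~\ref{prelim-1}; second, observe that the chain-theoretic reading of the added rule forces $\bo$ to coincide with the pointwise maximal consistency operator described in the paragraph preceding the statement.

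For the semilinearity step, the key observation is that the added rule $(\neg\neg_\bo)$ is already written in a shape that absorbs further disjunctions: its formal $\lor$-form with a fresh propositional variable $p$, namely ``from $(\neg\neg\varphi \lor \delta)\lor p$ derive $(\bo\varphi \lor \delta)\lor p$'', is an instance of $(\neg\neg_\bo)$ itself with $\delta \lor p$ playing the role of $\delta$, so this $\lor$-form is trivially derivable. Since $\logic{L}_\bo$ is already semilinear (its extra rules \sCNG\ and (Coh) being $\lor$-closed, as discussed earlier in Section~\ref{secbola01}), the criterion recalled in Section~\ref{prelim-1} immediately yields semilinearity of $\logic{L}^{max}_\bo$, and combined with the algebraizability inherited from the Rasiowa-implicative base $\logic{L}_\bo$ this gives strong completeness with respect to the class of $\logic{L}^{max}_\bo$-chains.

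For the chain characterization step, on any chain $\lor$ equals the maximum, so validity of $(\neg\neg_\bo)$ collapses to the quasi-equation ``$\neg\neg x = 1$ implies $\bo(x) = 1$'', equivalently ``$\neg x = 0$ implies $\bo(x) = 1$''. Together with $(\bo 1)$, which forces $\bo(x) = 0$ whenever $x \wedge \neg x \neq 0$, and $(\bo 2)$, giving $\bo(0) = \bo(1) = 1$, this pins down $\bo$ uniquely on every chain as the pointwise maximal consistency operator described just above the proposition. I expect no genuine obstacle: the only mildly delicate point is the chain-to-quasi-equation translation of rule validity, which is routine once one notes that $a \lor b = 1$ on a chain iff $a = 1$ or $b = 1$.
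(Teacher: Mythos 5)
Your argument is correct and matches the paper's own (very terse) justification: the paper likewise notes that the rule $(\neg\neg_\bo)$ is already closed under $\lor$-forms, invokes the semilinearity criterion from Section~\ref{prelim-1} to get chain completeness, and then reads off from the chain condition ``if $\neg x=0$ then $\bo(x)=1$'', together with $(\bo 1)$ and $(\bo 2)$, that $\bo$ is pinned down as the pointwise maximal consistency operator. No substantive difference in approach.
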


As a final remark, we notice that in case $\logic{L}$ is an extension of the basic fuzzy logic $\logic{BL}$, the above rule $(\neg\neg_\bo)$ can be equivalently replaced by the following axiom:

\begin{itemize}
\item[]  $(\neg\neg \varphi \to \varphi) \lor \bo \varphi$
\end{itemize}
Indeed, it is not difficult to check that, given the special features of negations in BL-chains, a consistency operator  $\bo$  in a  BL-chain $\bf A$ satisfies this axiom iff $\bo(x) = 0$ for $x$ such that $0 < \min(x, \neg x) $ and $\bo(x) = 1$ otherwise. Therefore, the quasivariety of $\logic{L}^{max}_\bo$-algebras is in fact a variety when $\logic{L}$ is a BL-extension, but whether the class of $\logic{L}^{max}_\bo$-algebras is a variety in a more general case remains as an open problem.

Figure \ref{summary} gathers the axiomatizations (relative to $\logic{L}$) of the logic $\logic{L}_\bo$ and of the different extensions  we have defined in Section \ref{extensions}.

\begin{figure}[htbp]
\begin{center}

\begin{tabular}{| c | l | c |}
\hline
Logic &  \hspace*{2.5cm} Definition & Operator $\bo$ \\
\hline
$\logic{L}_\bo$ &
\begin{tabular}{lll}
$\logic{L} \mbox{ } + $ & (A1) & $\neg(\varphi \land \neg \varphi \land \bo \varphi)$ \\
& (A2) &  $\bo \overline{1}$ \\
& (A3) & $ \bo \overline{0}$ \vspace{0.2cm}\\
& (Cong) &  { $ \displaystyle \frac{(\varphi \leftrightarrow \psi) \lor \delta}{(\bo \varphi \leftrightarrow \bo \psi) \lor \delta}$}  \vspace{0.3cm} \\
& (Coh) & { $ \displaystyle \frac{(\neg\neg \varphi \land (\varphi \to \psi)) \lor \delta}{(\bo \varphi \to \bo \psi) \lor \delta}$}\\
\end{tabular}
&
\begin{tabular}{c}
\\
\includegraphics[width=3.5cm]{figura-bola1.pdf}
\end{tabular}
\\
\hline
$\logic{L}^{\neg\neg}_\bo$ &
\begin{tabular}{lll}
$\logic{L}_\bo \mbox{ } + $ & $(\neg\neg)$ &  { $ \displaystyle \frac{\neg\neg\varphi}{\varphi}$} \\
\end{tabular}
&
\begin{tabular}{c}
\\
\includegraphics[width=3.5cm]{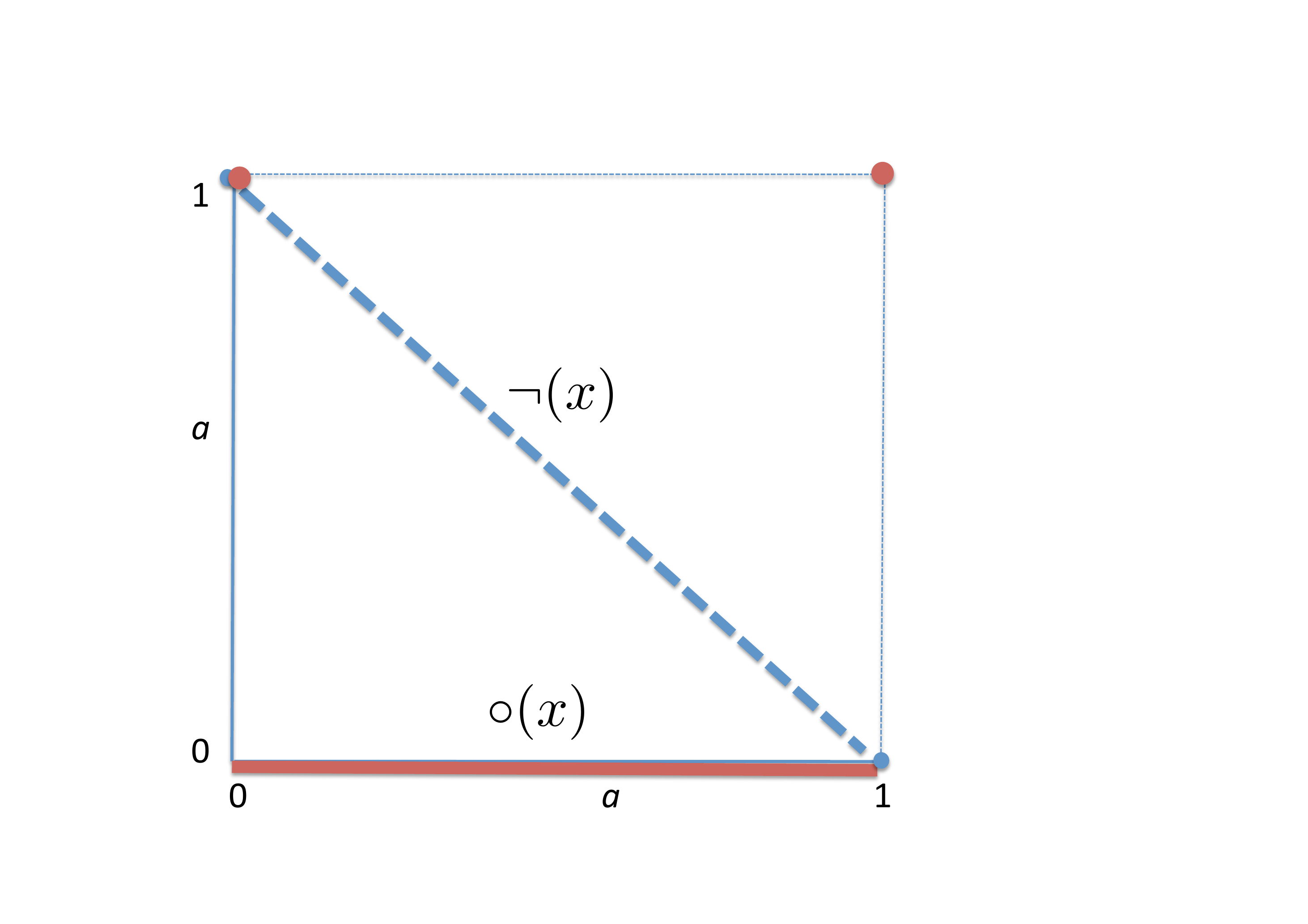}
\end{tabular}
\\
\hline
$\logic{L}^{c}_\bo$ &
\begin{tabular}{lll}
$\logic{L}_\bo \mbox{ } + $ & $(c)$ &  $\bo \varphi \lor \neg \bo \varphi$ \\
\end{tabular}
&
\begin{tabular}{c}
\\
\includegraphics[width=3.5cm]{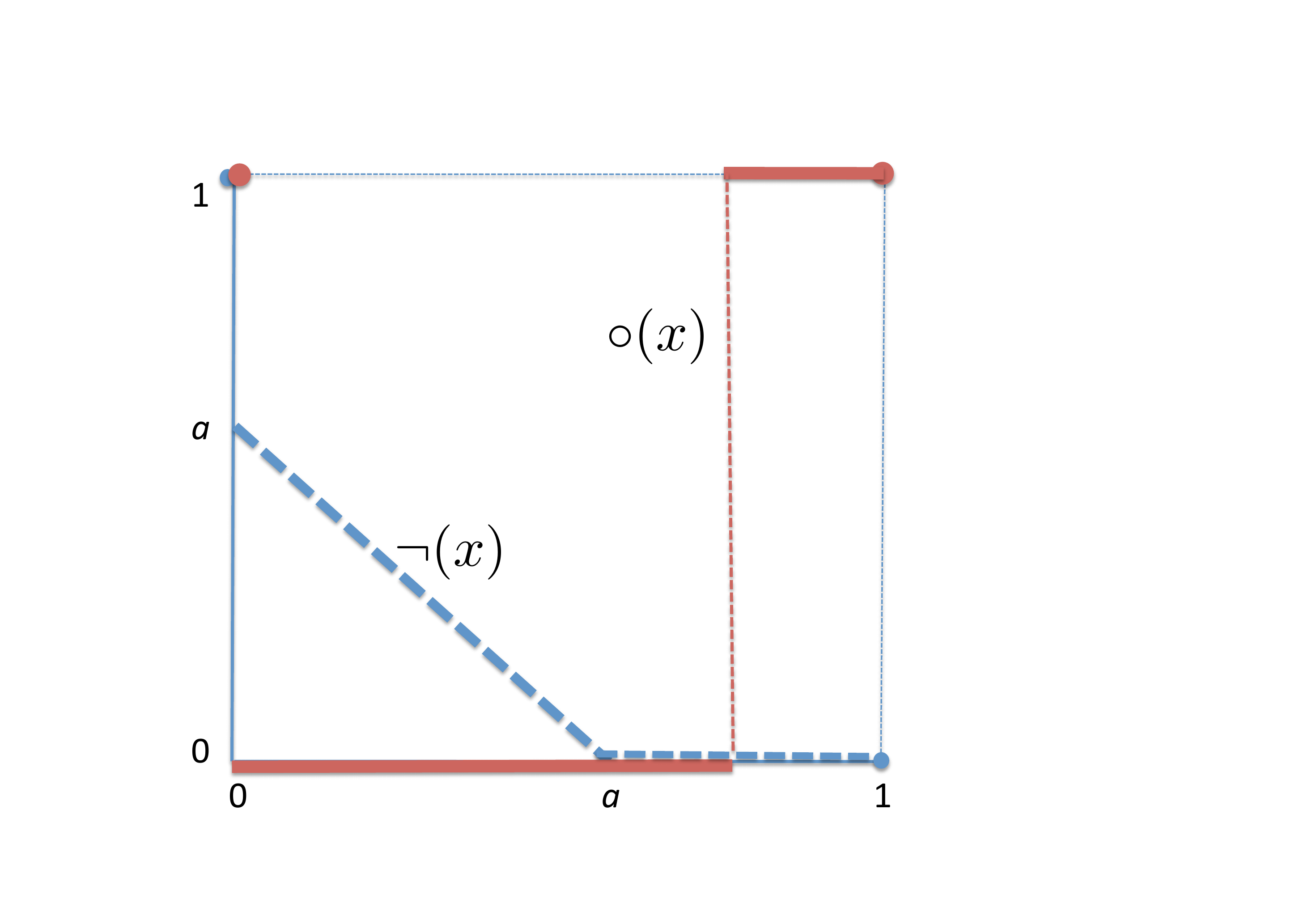}
\end{tabular}
\\
\hline
$\logic{L}^{min}_\bo$ &
\begin{tabular}{lll}
$\logic{L}_\bo \mbox{ } + $ & (A4) &  $ \varphi \lor \neg \varphi \lor \neg \bo \varphi$ \\
\end{tabular}
&
\begin{tabular}{c}
\\
\includegraphics[width=3.5cm]{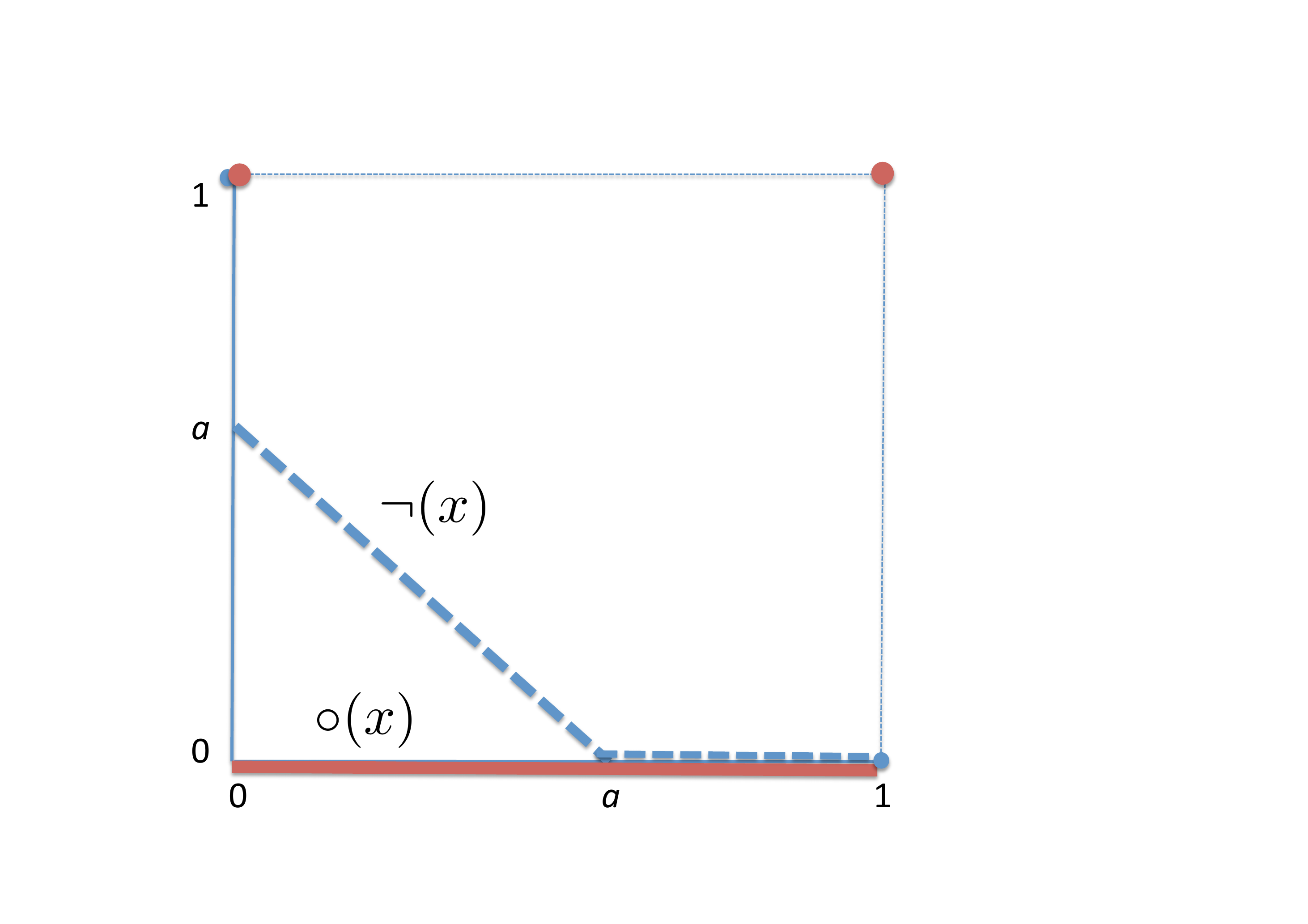}
\end{tabular}
\\
\hline
$\logic{L}^{max}_\bo$ &
\begin{tabular}{lll}
$\logic{L}_\bo \mbox{ } + $ & ($\neg\neg_\bo$)&   $\displaystyle \frac{ \neg \neg \varphi \lor \delta}{ \bo\varphi \lor \delta}$    \\
\end{tabular}
&
\begin{tabular}{c}
\\
\includegraphics[width=3.5cm]{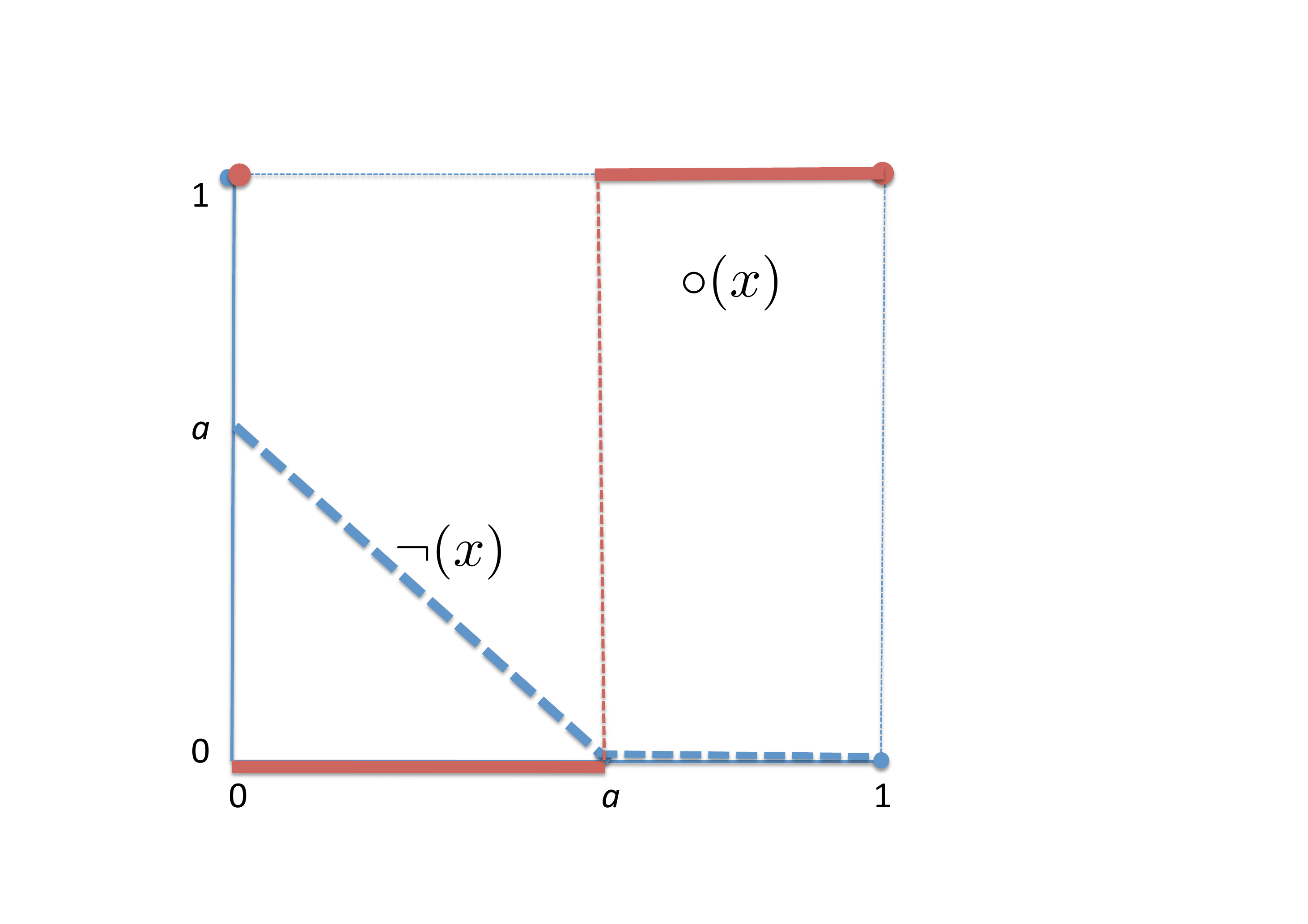}
\end{tabular}
\\
\hline
\end{tabular}

\caption{Summary of the axiomatizations of the logics $\logic{L}_\bo$, $\logic{L}^{\neg\neg}_\bo$, $\logic{L}^c_\bo$, $\logic{L}^{min}_\bo$, $\logic{L}^{max}_\bo$, and corresponding  graphs of  the operators $\bo$ on a standard BL-chain, where $ 0 < a \leq 1$.} 
\label{summary}
\end{center}
\end{figure}

\begin{remark} \label{remark-cons} Taking into account the graphs of the $\bo$ operators on real chains associated to  the logics $\logic{L}^{\neg\neg}_\bo$, $\logic{L}^c_\bo$, $\logic{L}^{min}_\bo$ and  $\logic{L}^{max}_\bo$ (see Figure \ref{summary}), and the proof of conservativeness of $\logic{L}_\bo$ with respect to $\logic{L}$ in Proposition \ref{p:ConsExp}, it is clear that the same kind of proof also applies to all these extensions of  $\logic{L}_\bo$. Hence  $\logic{L}^{\neg\neg}_\bo$ is a conservative expansion of  $\logic{L}^{\neg\neg}$ and $\logic{L}^c_\bo$, $\logic{L}^{min}_\bo$ and  $\logic{L}^{max}_\bo$ are conservative expansions of $\logic{L}$. 
\end{remark}

\subsection{Axiomatizing paraconsistent fuzzy logics $\logic{L}^{\mbox{\tiny $\leq$ }}$ with $\bo$}

As mentioned in the introduction of this section we know that the only paraconsistent fuzzy logic are the logics $\logic{L}^{\mbox{\tiny $\leq$ }}$ when $\logic{L}$ is not an SMTL logic. Thus our ultimate goal is the axiomatization of the expansion of paraconsistent logics $\logic{L}^{\mbox{\tiny $\leq$ }} $with a consistency operator $\bo$, that will be denoted $\logic{L}^{\mbox{\tiny $\leq$}}_\bo $. But from results of this section we know how to axiomatize the logics $L_\bo$ and, as shown in Section \ref{prelim-1} we know how to get an axiomatization of  $\logic{L}^{\mbox{\tiny $\leq$}}_\bo $ from the one of $\logic{L}_\bo $. Indeed the axiomatization of $\logic{L}^{\mbox{\tiny $\leq$}}_\bo $ is obtained by taking the same axioms of  $\logic{L}_\bo$ and adding the following inference rules: \vspace{0.2cm}

\begin{description}

\item[(Adj-$\wedge$)] from $\f$ and $\p$ deduce $\f\wedge\p$

\item[(MP-$r$)] if \mbox{}  $\vdash_{\logic{L_{\bo}}}\f\to\p$ \mbox{} (i.e. if  $\f\to\p$ is a  theorem of $\logic{L}_\bo$), then from $\f$   derive $\p$

\item [(Cong-$r$)] if \mbox{ } $\vdash_{\logic{L_{\bo}}}(\varphi \leftrightarrow \psi) \vee \delta$ \mbox{ } then derive \mbox{ } $(\bo\varphi \leftrightarrow \bo\psi) \vee \delta$

\item[(Coh-$r$)] if \mbox{ } $\vdash_{\logic{L_{\bo}}} (\neg\neg\varphi \wedge (\varphi\to \psi)) \vee \delta$ \mbox{ } then derive \mbox{ } $ (\bo\varphi \to \bo\psi) \vee \delta$

\end{description}

In the same way we could obtain axiomatizations of the logics $\logic{L}^{\mbox{\tiny $\leq$}}_\bo $ when $\logic{L}$ is any of the logics studied in this section. Then axiomatizations of the logics $(\logic{L}_\bo ^{\neg \neg})^{\mbox{\tiny $\leq$}}$, $(\logic{L}_\bo^c)^{\mbox{\tiny $\leq$}}$, $(\logic{L}^{min}_\bo)^{\mbox{\tiny $\leq$}}$ and $(\logic{L}^{max}_\bo)^{\mbox{\tiny $\leq$}}$ are easily obtained. Figure \ref{summary-2} gathers the inference rules of these logics (recall that the axioms coincide with those of the corresponding truth-preserving logics). Therefore we have defined and axiomatized a general family of LFIs  based on fuzzy logics and some of its extensions.

\begin{figure}[htbp]
\begin{center}
\begin{tabular}{| c | l |}
\hline
Logic & \hspace*{3cm}Inference rules   \\
\hline
$\logic{L}^{\mbox{\tiny $\leq$}}_\bo$  &
\begin{tabular}{lll} \\
rules of $\logic{L}^{\mbox{\tiny $\leq$}}$ \mbox{ } + \mbox{}  &  (Cong-r) &  $ \displaystyle \frac{\vdash_{\logic{L}_\bo} (\varphi \leftrightarrow \psi) \lor \delta}{ (\bo \varphi \leftrightarrow \bo \psi) \lor \delta}$ \vspace{0.3cm}  \\
& (Coh-r) & $ \displaystyle \frac{\vdash_{\logic{L}_\bo} (\neg\neg \varphi \land (\varphi \to \psi)) \lor \delta}{(\bo \varphi \to \bo \psi) \lor \delta}$\\ \\
\end{tabular}
\\
\hline
$(\logic{L}_\bo ^{\neg \neg})^{\mbox{\tiny $\leq$}}$ &
\begin{tabular}{lll} \\
rules of $\logic{L}_\bo^{\mbox{\tiny $\leq$}}$ \mbox{ } + \mbox{}  & $(\neg\neg$-$r)$ &  { $ \displaystyle \frac{\vdash_{\logic{L}^{\neg\neg}_\bo} \neg\neg\varphi}{\varphi}$} \\ \\
\end{tabular}
\\
\hline
$(\logic{L}_\bo^c)^{\mbox{\tiny $\leq$}}$ &
\begin{tabular}{lll} \\
rules of $\logic{L}_\bo^{\mbox{\tiny $\leq$}}$   &  \\ \\
\end{tabular}
\\
\hline
$(\logic{L}^{min}_\bo)^{\mbox{\tiny $\leq$}}$ &
\begin{tabular}{lll} \\
rules of $\logic{L}_\bo^{\mbox{\tiny $\leq$}}$  &  \\  \\
\end{tabular}
\\
\hline
$(\logic{L}^{max}_\bo)^{\mbox{\tiny $\leq$}}$ &
\begin{tabular}{lll} \\
rules of $\logic{L}_\bo^{\mbox{\tiny $\leq$}}$ \mbox{ } + \mbox{} & $(\neg\neg_\bo$-$r)$ & $\displaystyle \frac{ \vdash_{\logic{L}^{max}_\bo} \neg \neg \varphi \lor \delta}{ \bo\varphi \lor \delta}$  \\ \\
\end{tabular}
\\
\hline
\end{tabular}

\caption{Summary of the inference rules of the logics $\logic{L}^{\mbox{\tiny $\leq$}}_\bo$, $(\logic{L}_\bo ^{\neg \neg})^{\mbox{\tiny $\leq$}}$, $(\logic{L}_\bo^c)^{\mbox{\tiny $\leq$}}$, $(\logic{L}^{\min}_\bo)^{\mbox{\tiny $\leq$}}$ and $(\logic{L}^{\max}_\bo)^{\mbox{\tiny $\leq$}}$
.}
\label{summary-2}
\end{center}
\end{figure}

Finally, recall that, as observed at the end of Section \ref{prelim-1}, if a logic $\logic{L}'$ is a conservative expansion of another $\logic{L}$, then $\logic{L'}^{\mbox{\tiny $\leq$ }}$ is also a conservative expansion of $\logic{L}^{\mbox{\tiny $\leq$ }}$. Therefore, taking into account Proposition \ref{p:ConsExp} and Remark \ref{remark-cons}, we get the following immediate result. 

\begin{proposition}  The logics $\logic{L}^{\mbox{\tiny $\leq$}}_\bo $, $(\logic{L}_\bo^c)^{\mbox{\tiny $\leq$}}$, $(\logic{L}^{min}_\bo)^{\mbox{\tiny $\leq$}}$ and $(\logic{L}^{max}_\bo)^{\mbox{\tiny $\leq$}}$ are conservative expansions of $\logic{L}^{\mbox{\tiny $\leq$}}$, while  $(\logic{L}_\bo ^{\neg \neg})^{\mbox{\tiny $\leq$}}$ is a conservative expansion of $(\logic{L} ^{\neg \neg})^{\mbox{\tiny $\leq$}}$. 
\end{proposition}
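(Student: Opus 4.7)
The plan is to derive each clause of the proposition as an immediate corollary of facts already established in the paper. The main ingredient is the meta-principle recalled at the very end of Section~\ref{prelim-1}: whenever $\logic{L}'$ is a conservative expansion of $\logic{L}$, its degree-preserving companion $\logic{L'}^{\mbox{\tiny $\leq$ }}$ is a conservative expansion of $\logic{L}^{\mbox{\tiny $\leq$ }}$. Once this meta-principle is in hand, the statement follows by combining it with Proposition~\ref{p:ConsExp} and Remark~\ref{remark-cons}.

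First, I would justify (or at least recall the justification of) the meta-principle in a semantic style, using chain completeness. Assume $\Gamma \nvdash_{\logic{L}^{\mbox{\tiny $\leq$ }}} \varphi$ for a set $\Gamma \cup \{\varphi\}$ of formulas in the original language. By chain completeness and the definition of $\vdash_{\logic{L}^{\mbox{\tiny $\leq$ }}}$, there exist an $\logic{L}$-chain $\alg{A}$, an $\alg{A}$-evaluation $e$, and an element $a \in A$ such that $a \leq e(\psi)$ for every $\psi \in \Gamma$ while $a \not\leq e(\varphi)$. The conservativeness proofs in Proposition~\ref{p:ConsExp} and Remark~\ref{remark-cons} in fact supply, for each of the expansions under consideration, an explicit recipe for expanding any $\logic{L}$-chain (respectively any $\logic{L}^{\neg\neg}$-chain in the last case) to a chain of the expanded logic. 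Applying that recipe to $\alg{A}$ produces an $\logic{L}'$-chain on the same underlying lattice, and the pair $(e,a)$ remains a witness to $\Gamma \nvdash_{\logic{L'}^{\mbox{\tiny $\leq$ }}} \varphi$. The converse inclusion is immediate since $\logic{L'}^{\mbox{\tiny $\leq$ }}$ is by construction an expansion of $\logic{L}^{\mbox{\tiny $\leq$ }}$.

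Second, I would apply this meta-principle to each clause: Proposition~\ref{p:ConsExp} yields $\logic{L}^{\mbox{\tiny $\leq$}}_\bo$ conservative over $\logic{L}^{\mbox{\tiny $\leq$ }}$; Remark~\ref{remark-cons} yields $(\logic{L}^{c}_\bo)^{\mbox{\tiny $\leq$}}$, $(\logic{L}^{min}_\bo)^{\mbox{\tiny $\leq$}}$ and $(\logic{L}^{max}_\bo)^{\mbox{\tiny $\leq$}}$ conservative over $\logic{L}^{\mbox{\tiny $\leq$ }}$; and the same remark yields $(\logic{L}^{\neg\neg}_\bo)^{\mbox{\tiny $\leq$}}$ conservative over $(\logic{L}^{\neg\neg})^{\mbox{\tiny $\leq$ }}$.

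I do not anticipate any serious obstacle here: the statement is essentially bookkeeping. The only point that deserves a brief check is that the chain-expansion actually used in each case satisfies the extra axioms/rules of the relevant sublogic, i.e.\ that the ``witness'' expansion of $\alg{A}$ lives in the right subclass of $\logic{L}_\bo$-chains. For $\logic{L}^{\mbox{\tiny $\leq$ }}_\bo$ and the crisp cases $(\logic{L}^{c}_\bo)^{\mbox{\tiny $\leq$}}$, $(\logic{L}^{min}_\bo)^{\mbox{\tiny $\leq$}}$, $(\logic{L}^{\neg\neg}_\bo)^{\mbox{\tiny $\leq$}}$ the minimal operator $\bo(0)=\bo(1)=1$ and $\bo(x)=0$ otherwise already does the job; for $(\logic{L}^{max}_\bo)^{\mbox{\tiny $\leq$}}$ one instead uses the maximal $\bo$ (put $\bo(x)=1$ whenever $\neg x = 0$, and $\bo(x)=0$ on the rest of $A\setminus\{0,1\}$), which is precisely the expansion exhibited in the proof of Remark~\ref{remark-cons}. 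With this uniform template, the five clauses are obtained at once.
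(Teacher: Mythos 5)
Your proposal is correct and follows essentially the same route as the paper, which obtains the result immediately by combining the meta-principle that conservativity lifts to degree-preserving companions with Proposition~\ref{p:ConsExp} and Remark~\ref{remark-cons}. Your additional check that the witnessing chain-expansion must be chosen differently for $(\logic{L}^{max}_\bo)^{\mbox{\tiny $\leq$}}$ (the maximal rather than the minimal operator $\bo$) is a sensible piece of diligence that the paper leaves implicit in Remark~\ref{remark-cons}.
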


\section{About the propagation property} \label{propag}

One of the distinctive features of da Costa's C-systems is the so-called {\em propagation property} of the consistency connective $\bo$, which states that consistency (or well-behavior, according to da Costa's terminology) is propagated in the following sense: from $\bo\varphi$ it follows $\bo\neg\varphi$, and from $\{\bo\varphi,\bo\psi\}$ it follows $\bo(\varphi\#\psi)$, for every binary connective $\#$. We can adapt this property to our setting, and study  conditions which ensure its validity.

\begin{definition}  Let $\logic{L}$ be a paraconsistent fuzzy logic with a consistency operator $\bo$. Then we say that $\bo$ satisfies the \emph{propagation property} in $\logic{L}$ with respect to a subset $X$ of connectives of the language of $\logic{L}$  if
$$\bo\varphi_1,\ldots, \bo\varphi_n  \vdash_{\logic{L}}\bo\#(\varphi_1,\ldots,\varphi_n), $$ for every n-nary connective $\# \in X$ and formulas  $\varphi_1,\ldots,\varphi_n$ built with connectives from $X$.
\end{definition}

Observe that in the case $n = 0$, $\#$ is a constant and that the above condition requires $\vdash_L \bo \#$.

The paraconsistent fuzzy logics studied in this paper are  logics  $\logic{L}_\bo^{\mbox{\tiny $\leq$ }}$ and some extensions, where $\logic{L}$ is a truth-preserving fuzzy logic.  Knowing the relation between truth-preserving and degree-preserving fuzzy logics, $\bo$ satisfies the \emph{propagation property} in a logic $\logic{L}_\bo^{\mbox{\tiny $\leq$ }}$ with respect to some subset of connectives $X \subseteq \{ \0, \land, \&, \to\}$\footnote{We are  assuming here to work with a core fuzzy logic, and in core fuzzy logics these are the primitive connectives, the rest are definable from them.} whenever: 
$$ \mbox{ } \hspace*{1cm}  \left \{ \begin{array}{ll}
\vdash_{\logic{L}_\bo} \bo\0 & \mbox{if } \0 \in X \\
\vdash_{\logic{L}_\bo} (\bo\varphi \wedge \bo\psi) \to \bo(\varphi \# \psi), & \mbox{for each binary } \#  \in X \\
\end{array}
\right .  \hspace*{0.8cm} \hfill (Prop*)
$$
In such a case we will say that $\bo$ satisfies the propagation property (Prop*) in the logic  $\logic{L}_\bo^{\mbox{\tiny $\leq$ }}$ with respect to the set of connectives $X$. Actually since in the logics $\logic{L}_\bo$ we have $\vdash_L \bo \0$, the first condition is always satisfied and it can be dropped from (Prop*).

\begin{proposition} \label{prop} The following conditions hold:
\begin{enumerate}
\item
 $\bo$ satisfies (Prop*) in any fuzzy logic $\logic{L}_\bo$ with respect to the set of primitive connectives $\{\land,  \to\}$.

\item $\bo$ satisfies (Prop*) in any fuzzy logic of the families  $\logic{L}^{\neg\neg}_\bo$,  
$\logic{L}^{\min}_\bo$ or $\logic{L}^{\max}_\bo$ with respect to the set of primitive connectives $\{ \wedge, \&, \to\}$.\\

\end{enumerate}
\end{proposition}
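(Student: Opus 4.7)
The plan is to exploit the fact that all the logics involved---$\logic{L}_\bo$, $\logic{L}^{\neg\neg}_\bo$, $\logic{L}^{\min}_\bo$ and $\logic{L}^{\max}_\bo$---are complete with respect to their corresponding classes of chains, so each propagation formula $(\bo\varphi \wedge \bo\psi) \to \bo(\varphi \,\#\, \psi)$ is a theorem precisely when the algebraic inequality $\bo(x) \wedge \bo(y) \leq \bo(x \,\#\, y)$ holds in every chain of the relevant class. The constant case $\vdash \bo\0$ is immediate from axiom (A3), so I only need to handle the binary connectives. Throughout, I fix a chain $\alg A$ and assume $\bo(x), \bo(y) > 0$, since otherwise the inequality is trivial; by (c1) this restricts both $x$ and $y$ to $N(\alg A) \cup \{0, 1\}$.

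For part (1), the $\wedge$ case follows immediately from linearity: without loss of generality $x \leq y$, so $x \wedge y = x$ and $\bo(x \wedge y) = \bo(x) \geq \bo(x) \wedge \bo(y)$. For the $\to$ case, after dispatching the boundary subcases in which $x$ or $y$ lies in $\{0, 1\}$ by direct evaluation of $x \to y$ (using $\bo(0) = \bo(1) = 1$), the remaining situation is $x, y \in N(\alg A)$. If $x \leq y$ then $x \to y = 1$; otherwise $x > y$, and the crux is to show $x \to y \in N(\alg A) \cup \{1\}$. This I would derive from residuation (which gives $y \leq x \to y$) combined with antitonicity of $\neg$ (forcing $\neg(x \to y) \leq \neg y = 0$). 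Then (c3) applied to $y \leq x \to y$ yields $\bo(y) \leq \bo(x \to y)$, and hence $\bo(x) \wedge \bo(y) \leq \bo(x \to y)$.

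For part (2), the $\wedge$ and $\to$ arguments above carry over, so only the $\conj$ case requires new work. The point is that in each of the three extensions the operator $\bo$ is crisp. In $\logic{L}^{\neg\neg}_\bo$-chains we have $N(\alg A) = \emptyset$, and in $\logic{L}^{\min}_\bo$-chains $\bo(x) > 0$ iff $x \in \{0, 1\}$; in both situations $x, y \in \{0, 1\}$ forces $x \conj y \in \{0, 1\}$ and thus $\bo(x \conj y) = 1$. For $\logic{L}^{\max}_\bo$-chains, where $\bo(x) = 1$ iff $x \in N(\alg A) \cup \{0, 1\}$, after treating the boundary subcases I would use the residuation identity $\neg(x \conj y) = x \to (y \to 0) = x \to \neg y = x \to 0 = \neg x = 0$ for $x, y \in N(\alg A)$, placing $x \conj y$ into $N(\alg A) \cup \{0\}$ and yielding $\bo(x \conj y) = 1 \geq \bo(x) \wedge \bo(y)$.

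The main technical subtlety lies in the $\to$ case of part (1), specifically the subcase $x, y \in N(\alg A)$ with $x > y$: one needs to know that $x \to y$ remains in the ``zero-negation'' region $N(\alg A) \cup \{1\}$ so that (c3) can be invoked, which the combination of residuation and antitonicity of $\neg$ delivers. Every other step is a short case analysis over whether each of $x, y$ lies in $\{0\}$, $\{1\}$ or $N(\alg A)$, always leaning on $\bo(0) = \bo(1) = 1$ for the boundary values.
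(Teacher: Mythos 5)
Your proof is correct and follows essentially the same route as the paper's: reduction to chains via completeness, a case analysis on whether $x,y$ lie in $\{0,1\}$, in $N(\alg{A})$, or elsewhere (where $\bo$ vanishes by (c1)), the inequality $y \leq x\to y$ combined with (c3) for implication, and the observation that $\neg(x\conj y)=0$ whenever $\neg x=\neg y=0$ together with crispness of $\bo$ for strong conjunction in the three extensions. The only slip is cosmetic: in the $\logic{L}^{\max}_\bo$ case, $\neg(x\conj y)=0$ places $x\conj y$ in $N(\alg{A})\cup\{1\}$, not $N(\alg{A})\cup\{0\}$, which does not affect the conclusion $\bo(x\conj y)=1$.
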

\begin{proof}
Due to chain completeness of the logics involved, the whole proof is done by algebraic means. Let $\bf A$ be a  $\logic{L}_\bo$-chain. The first item for the connective $\wedge$ is a  consequence of the non-decreasing property of $\bo$ in $A \setminus \{0\}$ combined with  the fact that $\bo(0) = 1$. Before dealing with the property for $\to$, we first consider the property for the negation $\neg$, that is proved by cases. If $x \in \{0,1\}$ the property is obvious. If $x \in N({\bf A})$ then $\neg x = 0$, and thus $\bo(\neg x) = 1$ and the property obviously holds. Otherwise, if $x$ does not belong to the previous cases $x \wedge \neg x > 0$ then $\bo(x) = 0$ and also the property is obviously satisfied. Finally, for $\to$ the proof is easy since, remembering that a residuated implication satisfies the inequality $y \leq x \to y $, assuming $y > 0$ we have $\bo(x) \wedge \bo(y) \leq \bo(y) \leq \bo(x \to y)$; if $y = 0$ then we are back to the case of the negation. 

For the second item 
we only need to deal with the case of $\&$. First observe that if $\neg x = 0$ and  $\neg y = 0$, then $\neg(x \& y) = 0$ as well, since  $\neg(x \& y) = (x \& y) \to 0 = x \to (y \to 0) = x \to \neg y = x \to 0 = \neg x = 0$. Second observe that in the chains of the considered logics, the image of $\bo$ is $ \{0,1\}$. From there the proof is easy. If one of the values $x,y$ is 0 or 1 the result is obvious. If one of the values $x,y$ is in $(0,1) \setminus N(A)$ then $\bo(x) \wedge \bo(y) = 0$, and the implication is trivially valid. Finally if $x,y \in N(A)$ the result follows from the fact that $\bo$ is non-decreasing taking into account the first observation above.
\end{proof}

The first item of Proposition \ref{prop} can not be improved in the sense that $\logic{L}_\bo$ does not prove $\bo(\varphi) \wedge \bo(\psi) \to \bo (\varphi \& \psi)$,
 as the following example shows.

\begin{example}
Let $\logic{L}$ be the logic of the t-norm $\otimes$ that is the ordinal sum of a {\L}ukasiewicz component and a product component, with $\frac{1}{2}$ being the idempotent separating point. Then consider the $\logic{L}_\bo$-chain where the consistency operator $\bo$ is defined by $\bo(x) = 0$ if $x \in (0,\frac{3}{4})$ and $\bo(x) = 1$ otherwise. Take now $x= \frac{5}{6}$ and $y =  \frac{3}{4}$.  Then $\frac{5}{6} \otimes \frac{3}{4} < \frac{3}{4}$, and clearly $\bo(x) = \bo(y) = 1$ while $\bo( \frac{5}{6} \otimes \frac{3}{4}) = 0$.
\end{example}

\section{Recovering Classical Logic} \label{recov}

In the context of LFIs, it is a desirable property to recover the classical reasoning by means of
the consistency connective $\bo$ (see~\cite{car:con:mar:07}). Specifically, let {\bf CPL} be
classical propositional logic. If $\logic{L}$ is a given LFI such that its reduct to the language of
{\bf CPL} is a sublogic of  {\bf CPL}, then a DAT (Derivability Adjustment  Theorem) for $\logic{L}$
with respect to {\bf CPL} is as follows: for every finite set of formulas $\Gamma\cup\{\varphi\}$ in
the language of {\bf CPL}, there exists a finite set of formulas $\Theta$ in the language of
$\logic{L}$, whose variables occur in formulas of $\Gamma\cup\{\varphi\}$, such that
$$\mbox{(DAT)} \ \ \ \Gamma \vdash_{\bf CPL}\varphi \ \mbox{ iff } \ \bo(\Theta),\Gamma
\vdash_{\logic{L}}\varphi.$$
When the operator $\bo$ enjoys the propagation property in the logic $\logic{L}$  with respect to
the classical connectives (see the previous section) then the DAT takes the following, simplified
form: for every finite set of formulas $\Gamma\cup\{\varphi\}$ in the language of {\bf CPL},
$$\mbox{(PDAT)} \ \ \ \Gamma \vdash_{\bf CPL}\varphi \ \mbox{ iff } \ \bo p_1,\ldots,\bo p_n,\Gamma
\vdash_{\logic{L}}\varphi$$
where $\{ p_1,\ldots, p_n\}$ is the set of propositional variables occurring in
$\Gamma\cup\{\varphi\}$.

Here we are interested in investigating whether we can expect some form of the  (PDAT) for the logics $\logic{L}_\bo^{\mbox{\tiny $\leq$ }}$ and, as we have noted before, $\bo$ has the propagation property iff certain formulas are theorems in $\logic{L}_\bo$. Thus in terms of theoremhood, to have a DAT when $\bo$ propagates in $\logic{L}_\bo^{\mbox{\tiny $\leq$ }}$ is equivalent to prove the following:
$$\mbox{(PDAT$^*$)} \ \ \ \vdash_{\bf CPL}\varphi \ \mbox{ iff } \  \vdash_{\logic{L}_\bo}
\left(\bigwedge_{i=1}^n \bo p_i\right) \to\varphi   \ \mbox{ iff } \   \bigwedge_{i=1}^n \bo p_i \vdash_{\logic{L}^{\leq}_\bo} \varphi  $$
where $\{ p_1,\ldots, p_n\}$ is the set of propositional variables occurring in $\varphi$
(obviously, when this set is empty, that is, when $n=0$, then $\bigwedge_{i=1}^n \bo p_i$ is set to
be $\bar 1$). Arguably, (PDAT) (or (PDAT$^*$)) is more interesting than (DAT). For instance, the hierarchy $(C_n)_{n\geq 1}$ of paraconsistent logics  introduced by da Costa satisfies (PDAT).

Since $\bo$ satisfies the propagation property in logics $\logic{L}_\bo$ with respect to the classical
signature (cf. Proposition \ref{prop}), we  try to prove (PDAT*) for them. However, in the general setting of logics $\logic{L}_\bo$ (PDAT*) does not always hold. Indeed it is easy to see that $\vdash_{\bf CPL} p \vee \neg p$ but $\nvdash_{\logic{L}_\bo} \bo p\to (p \vee \neg p)$, i.e.  $\bo p\to (p \vee \neg p)$ is not a tautology over all $\logic{L}_\bo$-chains. Take for example the $\logic{L}_\bo$-chain defined on a $\logic{L}$-chain $\alg{A}$ by defining $\bo$ as follows: $\bo(x) =1$ if $x \in \{0,1\} \cup N(\alg{A})$ and $\bo(x) = 0$ otherwise. Then it is easy to see that if $N(\alg{A}) \neq \emptyset$ then $e(\bo p\to (p \vee \neg p)) \neq 1$ for any evaluation $e$ on $\bf A$ such that
 $e(p) \in N(\alg{A})$.

This example is significative since  the principle $\varphi \vee \neg \varphi$ is enough to collapse MTL-logic with classical logic. In fact we propose the following definition.

\begin{definition} Let  $\logic{L}_\bo^{dat}$ be the logic obtained from $\logic{L}_\bo$ by adding the following axiom:\\

{\rm($\bo$EM)} $\bo\varphi \to (\varphi \vee\neg\varphi)$

\end{definition}

By the same argument as above, $\logic{L}_\bo^{dat}$ is algebraizable and its algebraic semantics is given by the class of $\logic{L}_\bo^{dat}$-algebras.

\begin{definition} A $\logic{L}_\bo^{dat}$-algebra ${\bf A}$ is a $\logic{L}_\bo$-algebra  such that $\bo(x) \leq x \vee \neg x$ for every $x \in A$.
\end{definition}

Therefore $\logic{L}_\bo^{dat}$ is complete with respect to the variety of $\logic{L}_\bo^{dat}$-algebras and, more important, with respect to the chains of the variety (since the logic is an axiomatic extension and thus it is semilinear as $\logic{L}_\bo$).

Moreover since $\logic{L}_\bo^{dat}$ extends $\logic{L}_\bo$, it follows that $\bo$ satisfies the propagation property in $\logic{L}_\bo^{dat}$ with respect to the classical signature. 

However, since $\logic{L}_\bo^{dat}$ does not satisfy contraction, property (PDAT*)  will be hardly satisfied: it should be intuitively clear that, in some situations, it could be necessary to use the `consistency assumption'  (and so the law of excluded middle)  more than once in order to obtain a given tautology. We  show next that a slightly modified form of DAT indeed holds for $\logic{L}_\bo^{dat}$. 

\begin{proposition} \label{pdat} The logic $\logic{L}_\bo^{dat}$ satisfies the following form of DAT:
$$\mbox{(PDAT$^{**}$)} \ \ \ \vdash_{\bf CPL}\varphi \ \mbox{ iff  there is  $k\geq 1$ such that } \
\vdash_{\logic{L}_\bo^{dat}}\left(\bigwedge_{i=1}^n \bo p_i\right)^k \to\varphi$$
where $\{ p_1,\ldots, p_n\}$ is the set of propositional variables occurring in $\varphi$ and $\psi^k$ is as  a shorthand for $\psi \conj \stackrel{k}{\ldots} \conj \psi$. 
\end{proposition}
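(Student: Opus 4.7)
The plan is to prove each direction of the biconditional separately, using chain-completeness of $\logic{L}$ and the local deduction theorem for $\logic{L}$. For the easy direction $(\Leftarrow)$, I would exhibit the two-element Boolean algebra $\mathbf{2}$ as an $\logic{L}_\bo^{dat}$-chain by setting $\bo(0)=\bo(1)=1$; a short verification shows that all axioms of $\logic{L}_\bo^{dat}$, in particular $(\bo\mathrm{EM})$, which on $\mathbf{2}$ reduces to $1 \leq x \vee \neg x$, are satisfied. Under any classical evaluation $e$ on $\mathbf{2}$, each $\bo p_i$ gets value $1$, so the hypothesised theorem $(\bigwedge_{i} \bo p_i)^k \to \varphi$ forces $e(\varphi) = 1$; since the MTL-operations restricted to $\{0,1\}$ coincide with the classical ones and $\varphi$ is in the CPL-language, this shows that $\varphi$ is a CPL-tautology.

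For the hard direction $(\Rightarrow)$, assume $\vdash_{\bf CPL}\varphi$ with $\varphi$ built from $p_1,\ldots,p_n$, and proceed in three steps. First, I would establish the semantic entailment
$$\bigwedge_{i=1}^n (p_i \vee \neg p_i) \vdash_{\logic{L}} \varphi$$
via chain-completeness of $\logic{L}$: in any $\logic{L}$-chain, the premise $e(p_i \vee \neg p_i) = 1$ means $\max(e(p_i),\neg e(p_i))=1$, and since in an MTL-chain $\neg x = 1$ iff $x=0$, this forces each $e(p_i)\in\{0,1\}$; as the MTL-operations restricted to $\{0,1\}$ are the classical ones, $e(\varphi)$ equals the classical value of $\varphi$ under this Boolean assignment, which is $1$. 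Second, the local deduction theorem of $\logic{L}$ yields some $k \geq 1$ with $\vdash_{\logic{L}} \bigl(\bigwedge_{i=1}^n (p_i\vee\neg p_i)\bigr)^k \to \varphi$. Third, axiom $(\bo\mathrm{EM})$ gives $\bo p_i \to (p_i \vee \neg p_i)$, so by monotonicity of $\wedge$ and $\conj$ we obtain $\vdash_{\logic{L}_\bo^{dat}} (\bigwedge_i \bo p_i)^k \to \bigl(\bigwedge_i (p_i\vee\neg p_i)\bigr)^k$; chaining this with the second-step theorem (read in $\logic{L}_\bo^{dat}$, a conservative expansion of $\logic{L}$) yields the desired $\vdash_{\logic{L}_\bo^{dat}}(\bigwedge_i \bo p_i)^k \to \varphi$.

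The main obstacle is the first step of the $(\Rightarrow)$-direction: one must recognise that, although $\logic{L}$ does not generally prove $p \vee \neg p$, \emph{assuming} each $p_i \vee \neg p_i$ as a premise with designated value $1$ in a chain collapses the $p_i$'s to the Boolean skeleton $\{0,1\}$ and restricts the MTL-operations to their classical traces, so any CPL-tautology transfers verbatim. The exponent $k$ then emerges naturally from the local deduction theorem as the price paid for the lack of contraction in $\logic{L}$.
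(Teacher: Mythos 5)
Your proof is correct and follows essentially the same route as the paper: the two-element Boolean algebra with $\bo(0)=\bo(1)=1$ for the right-to-left direction, and for the converse the entailment $\{p_i\vee\neg p_i\}\vdash_{\logic{L}}\varphi$ via chain-completeness, the local deduction theorem to extract the exponent $k$, and axiom ($\bo$EM) to replace $p_i\vee\neg p_i$ by $\bo p_i$. The only difference is that you spell out the intermediate monotonicity step and the Boolean-skeleton argument in slightly more detail than the paper does.
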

\begin{proof}
Let $\varphi$ be a formula in the language of {\bf CPL} and suppose that $p_1, p_2,\ldots,p_n$ are the propositional variables appearing in $\varphi$ . If $\vdash_{\bf CPL}\varphi$ then $\{p_i \vee \neg p_i  \ : \ i = 1,2,\ldots,n\} \vdash_{\logic{L}} \varphi$, since for any evaluation $e$ in any $\logic{L}$-chain, $e(p_i \vee \neg p_i) = 1$ iff $e(p_i)$ is either $0$ or $1$. Then by the local deduction-detachment theorem of $\logic{L}$, there is a natural $k$ such that   $\vdash_{\logic{L}}(\bigwedge_{i=1}^n (p_i \vee \neg p_i))^k \to\varphi$,
and this theorem is also valid in $\logic{L}_\bo^{dat}$. Then, by axiom ($\bo$EM), this implies  $\vdash_{\logic{L}_\bo^{dat}}(\bigwedge_{i=1}^n \bo p_i)^k \to\varphi$,  and hence (PDAT$^{**}$) holds.

Conversely, assume that $\vdash_{\logic{L}_\bo^{dat}}(\bigwedge_{i=1}^n \bo p_i)^k \to \varphi$ for some $k\geq 1$, and let $e$ be any evaluation on the 2-element Boolean algebra ${\bf B}_2$. Since ${\bf B}_2$ can be considered as a $\logic{L}_\bo^{dat}$-chain where $\bo(0)=\bo(1)=1$, then we have $e((\bigwedge_{i=1}^n \bo p_i)^k \to \varphi)=1$. But then we necessarily have $e(\varphi)=1$, because $e(\bigwedge_{i=1}^n \bo p_i)=1$. Therefore $\varphi$ is a {\bf CPL}-tautology and so $\vdash_{\bf CPL}\varphi$.
\end{proof}

An easy reasoning shows an analogous result when we have an arbitrary set of premises built from a finite set of propositional variables:
$$\Gamma \vdash_{\bf CPL}\varphi \ \mbox{ iff  there is  $k\geq 1$ such that } \
\Gamma \vdash_{\logic{L}_\bo^{dat}}\left(\bigwedge_{i=1}^m \bo p_i\right)^k \to \varphi$$
where now $p_1, \ldots ,p_m$ are the propositional variables appearing in $\Gamma \cup \{\varphi\}$.

One interesting question is to know examples where {\rm (PDAT$^{**}$)} is only true for some $k \geq 2$. This is an open question in general but we have the following illustrative example.
Consider the formula $\varphi = (p \lor \neg p)^k$ with $k > 1$. Since in a logic $\logic{L}_\bo$ both the weak and strong conjunctions $\land$ and $\&$  are many-valued generalizations of the classical conjunction, it is clear that $ (p \lor \neg p)^k$ is equivalent to the classical tautology $p \lor \neg p$ when we restrict to Boolean evaluations. However there are extensions of  $\logic{BL}$ logic  where($\bo$EM) is a tautology but $\bo p \to  (p \lor \neg p)^k$ is not. For instance let $\logic{L}$ be the logic of the $\logic{BL}$-chain $[0, 1]_{{\textrm{\L}}\oplus{\textrm{\L}}}$ where ${\textrm{\L}}\oplus{\textrm{\L}}$ is the ordinal sum of two copies of {\L}ukasiewicz standard algebra $[0,1]_{\textrm{\L}}$, and let $a$ be the idempotent element separating the two components. Further, take the operator $\bo$ in $[0, 1]$ defined by  $\bo(0) = \bo(1) = 1$, $\bo(x) = x$ if $x\in [a,1]$, and $\bo(x) = 0$ otherwise. An easy computation shows that $\bo p \to  (p \lor \neg p)^2$ is already not a tautology in $\logic{L}^{dat}_\bo$. Therefore, if we use $\&$ as conjunction symbol in the language of {\bf CPL}, {\rm (PDAT$^{*}$)} is not valid in $\logic{L}_\bo^{dat}$, while obviously {\rm (PDAT$^{**}$)} is so. 
Nevertheless we have not been able to find a similar example when we use $\land$ as conjunction symbol the language of  {\bf CPL}.

Finally, notice that axiom ($\bo$EM) is a theorem of the logics $\logic{L}^{\neg\neg}_\bo$ and $\logic{L}^{\min}_\bo$. Therefore, in these logics $\bo p_i \lor \neg \bo p_i$ is a theorem, hence it is clear that $\bo p_i$  is equivalent to $(\bo p_i)^k$ for any $k$, and thus we have the following direct corollary.

\begin{corollary}
Let $\Gamma \cup \{\varphi\}$ be a finite set of formulas in the language of {\bf CPL} and let $\{ p_1,\ldots, p_m\}$ the set of propositional variables appearing in $\Gamma \cup \{\varphi\}$. Then $$\Gamma \vdash_{\bf CPL}\varphi \ \mbox{ iff  } \
\Gamma \vdash_{\logic{L}_\bo^{+}}\left(\bigwedge_{i=1}^n \bo p_i\right) \to  \varphi$$
where $+ \in \{\neg \neg, {\rm min}\}$. 
\end{corollary}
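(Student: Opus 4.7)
The plan is to read the corollary as a direct specialization of the extended form of (PDAT$^{**}$) stated in the paragraph right after Proposition~\ref{pdat}, exploiting the specific behaviour of $\bo$ in the two distinguished logics. Concretely, in both $\logic{L}_\bo^{\neg\neg}$ and $\logic{L}_\bo^{\min}$ the text already observes two facts: (i) axiom ($\bo$EM) is a theorem, so these logics are extensions of $\logic{L}_\bo^{dat}$; and (ii) $\bo\psi \lor \neg\bo\psi$ is a theorem, so $\bo\psi$ is crisp, which forces $\conj$-idempotence of $\bo$-formulas. These two facts together make the exponent $k$ that appears in (PDAT$^{**}$) vacuous.

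For the left-to-right direction, I would start from $\Gamma \vdash_{\bf CPL}\varphi$ and apply the parametric version of (PDAT$^{**}$) to obtain some $k \geq 1$ with
\[
\Gamma \vdash_{\logic{L}_\bo^{dat}} \left(\bigwedge\nolimits_{i=1}^m \bo p_i\right)^{k} \to \varphi.
\]
Since $\logic{L}_\bo^{+}$ extends $\logic{L}_\bo^{dat}$ for $+ \in \{\neg\neg,\min\}$, the same derivation lives in $\logic{L}_\bo^{+}$. Now, in any $\logic{L}_\bo^{+}$-chain $\bo$ only takes values in $\{0,1\}$, and $\conj$ is idempotent on $\{0,1\}$; hence $(\bo p_i)^{k} \leftrightarrow \bo p_i$ holds in every chain, and by chain completeness of $\logic{L}_\bo^{+}$ this equivalence is provable. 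Substituting yields $\Gamma \vdash_{\logic{L}_\bo^{+}} (\bigwedge_{i=1}^m \bo p_i) \to \varphi$, as required.

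For the right-to-left direction, the strategy is to view the two-element Boolean algebra ${\bf B}_2$ as an $\logic{L}_\bo^{+}$-chain with $\bo(0)=\bo(1)=1$. A quick verification shows the rule $(\neg\neg)$ and axiom (A4) respectively are trivially satisfied in ${\bf B}_2$, so this is legitimate for both values of $+$. Given $\Gamma \vdash_{\logic{L}_\bo^{+}} (\bigwedge_{i=1}^m \bo p_i) \to \varphi$ and any Boolean evaluation $e$ on ${\bf B}_2$ with $e[\Gamma] \subseteq \{1\}$, by soundness $e((\bigwedge_{i=1}^m \bo p_i) \to \varphi) = 1$; and since $e(\bo p_i) = 1$ for every $i$, we conclude $e(\varphi) = 1$. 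Completeness of {\bf CPL} then delivers $\Gamma \vdash_{\bf CPL}\varphi$.

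The only non-routine step is the collapse of the exponent $k$, but this reduces to the crispness of $\bo$-formulas, which has already been recorded in the excerpt; everything else is bookkeeping. So I do not anticipate a real obstacle, and the corollary should follow in a few lines once the two ingredients above are combined.
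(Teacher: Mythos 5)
Your proposal is correct and follows essentially the same route as the paper: the corollary is obtained by combining the parametric form of (PDAT$^{**}$) with the facts that ($\bo$EM) holds in $\logic{L}_\bo^{\neg\neg}$ and $\logic{L}_\bo^{\min}$ (so they extend $\logic{L}_\bo^{dat}$) and that $\bo$ is crisp in their chains, which collapses the exponent $k$; the converse via $\mathbf{B}_2$ is exactly the argument already used for Proposition~\ref{pdat}. The only cosmetic point is that the idempotence should be applied to the whole conjunction $\bigwedge_i \bo p_i$ (which is crisp because each conjunct is), rather than to each $\bo p_i$ separately, but this is the same one-line observation.
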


\section{Inconsistency operators in the logic \MTL} \label{inconsist}

As recalled in Section~\ref{seclfi}, within the LFIs framework one can also consider an  {\em inconsistency operator} $\bon $, dual to the consistency operator $\bo$,  where $\bon\varphi$ has the intended meaning of $\neg\bo\varphi$ (see \cite{car:con:mar:07}).  

In this section we show how to add inconsistency operators to \MTL-algebras, as well as to its logical counterparts, based on the content of the previous sections in terms of consistency operators. 

\begin{definition}  Given a logic $\logic{L}$ that is a semilinear core expansion of \MTL\ but not \SMTL, we define the logic $\logic{L}_\bon$ as the expansion of $\logic{L}$ in a language which incorporates a new unary connective $\bon$ with  the following axioms: \vspace{0.2cm}

\begin{tabular}{ll}
{\rm(A1')} & $\neg(\varphi\wedge \neg\varphi )\vee \bon\varphi$\\
{\rm(A2')}  & $\neg\bon \bar 1$\\
{\rm(A3')} & $\neg\bon \bar 0$\\
\end{tabular}
\vspace{0.2cm}

\noindent
and the following inference rules:  \vspace{0.2cm}

\begin{tabular}{ll}
 {\rm(Cong')} \, $\displaystyle \frac{  (\varphi \leftrightarrow \psi) \vee \delta}{ (\bon\varphi \leftrightarrow \bon\psi) \vee \delta}$ \hspace{1cm} & {\rm(Coh')} \,$\displaystyle \frac{  (\neg\neg\varphi \wedge (\varphi\to \psi)) \vee \delta}{ (\bon\psi \to \bon \varphi) \vee \delta}$  \vspace{0.4cm} \\
\end{tabular}

\end{definition}

As in the case of $\logic{L}_\bo$, due to the presence of the rule  {\rm(Cong')},  $\logic{L}_\bon$ is a Rasiowa-implicative logic, and thus it  is also algebraizable in the sense of Blok and Pigozzi and its algebraic semantics is given by $\logic{L}_\bon$-algebras.

\begin{definition} A $\logic{L}_\bon$-algebra ${\bf A}$ is an expansion of a $\logic{L}$-algebra with a new unary operation $\bon: A \to A$ satisfying  the following conditions for all $x, y, z\in A$:
\begin{enumerate}
\item[($\bon 1$)] $ \neg(x \wedge \neg x)  \vee \bon(x) = 1$
\item[($\bon 2$)] $\bon(1) = \bon(0) = 0$
\item[($\bon 3$)] if  $(\neg\neg x \wedge (x \to y)) \vee z = 1$ then $(\bon(y) \to \bon(x)) \vee z = 1$.
\end{enumerate}
\end{definition}
Again, since the rules  {\rm(Cong')} and  {\rm(Coh')} are closed under $\lor$-forms, $\logic{L}_\bon$ is complete with respect to the class of $\logic{L}_\bon$-chains. Obviously, the $\bon$ operators in  $\logic{L}$-chains have the dual form of the $\bo$ operators (described in Figure \ref{neg-bola}), and we will not go into further details. 

The intended duality between both operators $\bo$ and $\bon$ is made explicit in the following results.

\begin{proposition} \label{translat}
Let $t$ be a translation map from the language of  $\logic{L}_\bo$ to the language of  $\logic{L}_\bon$ which replaces $\bo$ by $\neg\bon$. Conversely, let $t'$ be the translation map in the opposite direction, which replaces $\bon$ by $\neg\bo$. Then $\Gamma \vdash_{\logic{L}_\bo}\varphi$ implies that  $t(\Gamma) \vdash_{\logic{L}_\bon}t(\varphi)$ and $\Gamma' \vdash_{\logic{L}_\bon}\varphi'$ implies that  $t'(\Gamma') \vdash_{\logic{L}_\bo}t'(\varphi')$.
\end{proposition}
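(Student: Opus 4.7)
The plan is to reduce the syntactic statement to a semantic one via chain completeness of both $\logic{L}_\bo$ and $\logic{L}_\bon$, and then exhibit, for each chain on one side, a canonical chain on the other side obtained by replacing the extra operator according to the translation. Concretely, for the first implication I would take an arbitrary $\logic{L}_\bon$-chain $\bf B$ (with operator $\bon^{\bf B}$) and define $\bo^{\bf B^\ast}(x):=\neg\bon^{\bf B}(x)$, keeping the rest of the structure unchanged; symmetrically for $t'$. Then the proof splits into two tasks: verifying that $\bf B^\ast$ really is an $\logic{L}_\bo$-chain, and transferring evaluations and truth values through the translation.

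For the algebraic verification, I would check the three axioms $(\bo 1)$--$(\bo 3)$ (respectively $(\bon 1)$--$(\bon 3)$) argument by argument in a chain, exploiting that in a linearly ordered MTL-algebra one has the dichotomies $a\wedge b=0 \iff a=0$ or $b=0$ and $a\vee b = 1 \iff a=1$ or $b=1$, together with $\neg 1=0$, $\neg 0=1$, and the fact that $a\leq b$ implies $\neg b\leq \neg a$. For instance, $(\bo 1)$ for $\bo^{\bf B^\ast}(x)=\neg\bon^{\bf B}(x)$ follows because $(\bon 1)$ in a chain forces $x\wedge\neg x=0$ or $\bon^{\bf B}(x)=1$, and either disjunct kills $x\wedge\neg x\wedge\neg\bon^{\bf B}(x)$; axiom $(\bo 3)$ follows from $(\bon 3)$ by contraposition of the implication, using that in a chain the disjunct $z$ either covers everything or one may assume $\bon^{\bf B}(y)\leq \bon^{\bf B}(x)$ and hence $\neg\bon^{\bf B}(x)\leq \neg\bon^{\bf B}(y)$. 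The converse direction (defining $\bon^{\bf B'^\ast}(x):=\neg\bo^{\bf B'}(x)$ from an $\logic{L}_\bo$-chain $\bf B'$) is entirely analogous.

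Next, given any $\bf B$-evaluation $e$, I would define $e^\ast$ on $\bf B^\ast$ by the same assignment on propositional variables and prove by induction on formula complexity of $\mathcal{L}_\bo$-formulas $\psi$ that $e^\ast(\psi)=e(t(\psi))$. The only non-trivial inductive step is the connective $\bo$, where
$$e^\ast(\bo\psi)=\bo^{\bf B^\ast}(e^\ast(\psi))=\neg\bon^{\bf B}(e(t(\psi)))=e(\neg\bon\, t(\psi))=e(t(\bo\psi)),$$
using the inductive hypothesis. Combining this with chain completeness, suppose $t(\Gamma)\nvdash_{\logic{L}_\bon} t(\varphi)$; then there exist $\bf B$ and $e$ with $e[t(\Gamma)]\subseteq\{1\}$ and $e(t(\varphi))\neq 1$, hence $e^\ast[\Gamma]\subseteq\{1\}$ and $e^\ast(\varphi)\neq 1$ in $\bf B^\ast$, contradicting $\Gamma\vdash_{\logic{L}_\bo}\varphi$. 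The second implication is proved by the symmetric argument with $t'$ and the construction from $\bf B'$ to an $\logic{L}_\bon$-chain.

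The main obstacle I anticipate is not the induction on formulas but the algebraic verification that the translated operators genuinely satisfy all three defining conditions; this is why chain completeness is essential, since the arguments use the prime-filter-like behaviour of $\vee$ and $\wedge$ on chains and would likely fail in general (non-linear) $\logic{L}_\bo$- or $\logic{L}_\bon$-algebras, where e.g.\ the equivalence between $a\leq b$ and $\neg b\leq \neg a$ cannot be strengthened to an iff on residua.
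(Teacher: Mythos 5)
Your proof is correct, but it takes a genuinely different route from the paper's. The paper argues purely syntactically: it checks that the $t$-image of each axiom of $\logic{L}_\bo$ is a theorem of $\logic{L}_\bon$ and that the $t$-image of each rule ((Cong), (Coh)) is derivable there (and symmetrically for $t'$), using only the MTL theorems $\neg(\varphi\land\psi)\leftrightarrow(\neg\varphi\lor\neg\psi)$, $\varphi\to\neg\neg\varphi$ and $(\varphi\to\psi)\to(\neg\psi\to\neg\varphi)$; the claim then follows by induction on derivations, with no completeness theorem needed. You instead go through the semantics: you turn any $\logic{L}_\bon$-chain into an $\logic{L}_\bo$-chain by setting $\bo:=\neg\bon$ (and dually), verify $(\bo 1)$--$(\bo 3)$ from $(\bon 1)$--$(\bon 3)$ using the chain dichotomies for $\wedge$ and $\vee$ and antitonicity of $\neg$, and transfer countermodels via the identity $e^\ast(\psi)=e(t(\psi))$. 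Your verifications are right (I checked all three conditions in both directions), and the argument is legitimate given that the paper establishes strong chain completeness for both $\logic{L}_\bo$ and $\logic{L}_\bon$; note, though, that for infinite $\Gamma$ you are leaning on the \emph{strong} form of that completeness, whereas the paper's syntactic induction on derivations avoids any such dependence. What your approach buys is the explicit algebraic duality between the two operators on chains (each $\logic{L}_\bon$-chain is literally an $\logic{L}_\bo$-chain in disguise), which is the conceptual content behind the proposition and also foreshadows the paper's subsequent remark that $t$ and $t'$ fail to be mutually inverse exactly because $\bo(x)$ need not equal $\neg\neg\bo(x)$; what the paper's approach buys is brevity and self-containedness at the proof-theoretic level.
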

\begin{proof}

It is enough to prove that: (i) the translation of each axiom of the source logic can be derived in the target logic, and (ii) the translation of each inference rule  of the source logic is an inference rule which is derivable in the target logic. The proof easily follows by using that in MTL the following formulas are theorems: $\neg (\varphi \land \psi) \leftrightarrow (\neg \varphi \lor \neg \psi)$, $\varphi \to \neg\neg \varphi$ and $(\varphi \to \psi) \to (\neg \psi \to \neg \varphi)$. 
\end{proof}

Notice however that the above translations do not yield that both logics are equivalent: indeed, the translations $t$ and $t'$ are not in general  each other's inverse. This is due to the fact that, e.g. in $\logic{L}_\bo$, $\bo\varphi$ in general is not equivalent to $\neg \neg \bo\varphi$. However, in the frame of the logic $\logic{L}^c_\bo$ and its extensions, where $\bo\varphi$ is Boolean, one can prove $\bo\varphi \leftrightarrow \neg \neg \bo\varphi$, and one can establish their equivalence with  their $\bon$-dual corresponding logics. Notice that if  $\logic{L}$ is an IMTL logic (i.e. whose negation is involutive), then $\logic{L}^c_\bo$ coincides with $\logic{L}_\bo$ itself. 

In particular, if we define dual counterparts of the logics  $\logic{L}^{\neg\neg}_\bo$,  $\logic{L}^c_\bo$, $\logic{L}^{\min}_\bo$ or $\logic{L}^{\max}_\bo$ as:

\begin{itemize}
\item[-]  $\logic{L}^{\neg\neg}_\bon$: is the extension of  $\logic{L}_\bon$ with the rule ``from $\neg\neg \varphi$ infer $\varphi$''

\item[-]  $\logic{L}^c_\bon$: is the axiomatic extension of  $\logic{L}_\bon$  with  the axiom $\bon\varphi \lor \neg \bon \varphi$

\item[-]  $\logic{L}^{\max}_\bon$: is the axiomatic extension of  $\logic{L}_\bon$ with the axiom $\varphi \lor \neg\varphi \lor \bon\varphi$

\item[-]  $\logic{L}^{\min}_\bon$: is the  extension of  $\logic{L}_\bon$ with the rule ``from $\neg\neg \varphi \lor \delta$ infer $\neg\bon\varphi \lor \delta$''
\end{itemize}
then we can list the following equivalences (denoted by $\equiv$) among logics via the translations $t$ and $t'$:  $\logic{L}^{\neg\neg}_\bo \equiv \logic{L}^{\neg\neg}_\bon$, $\logic{L}^c_\bo \equiv \logic{L}^c_\bon$, $\logic{L}^{\min}_\bo \equiv \logic{L}^{\max}_\bon$ and $\logic{L}^{\max}_\bo \equiv \logic{L}^{\min}_\bon$.\footnote{Keep in mind that, since we have kept the superscripts $\min$ and $\max$ for the  logics that respectively correspond to minimum and maximum $\bon$ operators, the dual $\logic{L}^{\min}_\bo$ is $\logic{L}^{\max}_\bon$ and viceversa, the dual of $\logic{L}^{\max}_\bo$ is $\logic{L}^{\min}_\bon$.} As a consequence, the (quasi) varieties associated to pairs of equivalent logics are termwise equivalent. 

To conclude, just to point out that, as in the case of the consistency operators, the paraconsistent versions of the above logics with the inconsistency operators $\bon$ would correspond to their degree-preserving counterparts, namely the logics  $\logic{L}^{\mbox{\tiny $\leq$}}_\bon$, $(\logic{L}_\bon ^{\neg \neg})^{\mbox{\tiny $\leq$}}$, $(\logic{L}_\bon^c)^{\mbox{\tiny $\leq$}}$, $(\logic{L}^{\min}_\bon)^{\mbox{\tiny $\leq$}}$ and $(\logic{L}^{\max}_\bon)^{\mbox{\tiny $\leq$}}$.

\section{Concluding remarks} \label{conclu}

In this paper we have investigated the possibility of defining paraconsistent logics of formal inconsistency (LFIs) based on systems of mathematical fuzzy logic, in particular by first expanding axiomatic extensions of the fuzzy logic MTL with the characteristic consistency and inconsistency operators of LFIs, and then by considering their degree-preserving versions, that are paraconsistent. Actually, in the same line of \cite{er-es-fla-go-no:2013} and based on a novel perspective, this paper intends to contribute to the study and understanding of the relationships between paraconsistency and fuzziness. 

\subsection*{Acknowledgments} 
The authors have been partially supported by the FP7-PEOPLE-2009-IRSES project MaToMUVI (PIRSES-GA-2009-247584). Coniglio was also supported by FAPESP (Thematic Project LogCons 2010/51038-0), and by a research grant from CNPq (PQ 305237/2011-0). Esteva and Godo also acknowledge partial support by the MINECO project~\mbox{TIN2012-39348-C02-01}.

\bibliographystyle{plain}

\end{document}